\documentclass[11pt]{article}

\usepackage{fancyhdr}
\usepackage{amsfonts}
\usepackage{amsmath}
\usepackage{amssymb}
\usepackage{amsthm}
\usepackage{tikz}
\usepackage{amscd}
\usepackage{amstext}
\usepackage{verbatim}

\usepackage[draft=false,colorlinks,bookmarksnumbered,linkcolor=black, citecolor=black]{hyperref}

\usepackage{oldgerm}
\usepackage{graphics}
\usepackage{graphicx}

\newlength{\fixboxwidth}
\newcommand*{\abs}[1]{\left| #1 \right|}                                
\newcommand*{\norm}[1]{\left\| #1 \right\|}                             
\newcommand*{\sep}{\; \vrule \;}                                        
\newcommand{\loc}{\mathrm{loc}}											

\newcommand{\re}{\mathbb{R}}\newcommand{\N}{\mathbb{N}}
\newcommand{\zz}{\mathbb{Z}}

\newcommand{\com}{\mathbb{C}}

\newcommand{\R}{{\re}^d}

\newcommand{\cs}{{\mathcal S}}

\newcommand{\cl}{{\mathcal L}}

\newcommand{\cf}{{\mathcal F}}
\newcommand{\cfi}{{\cf}^{-1}}

\newcommand{\supp}{{\rm supp \, }}

\renewcommand{\qedsymbol}{$\blacksquare$}

\newcommand{\be}{\begin{equation}}
\newcommand{\ee}{\end{equation}}
\newcommand{\beq}{\begin{eqnarray}}
\newcommand{\beqq}{\begin{eqnarray*}}
\newcommand{\eeq}{\end{eqnarray}}
\newcommand{\eeqq}{\end{eqnarray*}}

\newtheorem{satz}{Theorem}[section]

\newtheorem{defi}{Definition}[section]
\newtheorem{lem}{Lemma}[section]
 
\newtheorem{prop}{Proposition}[section]

\numberwithin{equation}{section}

\begin{document}


\title{Hölder Inequalities for Besov-Morrey and Triebel-Lizorkin-Morrey Spaces}
\author{Marc Hovemann\footnote{Friedrich-Schiller-Universität Jena, Faculty of Mathematics and Computer Science, Inselplatz 5, 07743 Jena, Germany. 
Email: \href{mailto:marc.hovemann@uni-jena.de }{marc.hovemann@uni-jena.de } }
}

\date{\today}

\maketitle

\noindent\textbf{Abstract:}  In this paper we prove Hölder inequalities for the Besov-Morrey spaces $  \mathcal{N}^{s}_{u,p,q}(\mathbb{R}^{d})   $ and the Triebel-Lizorkin-Morrey spaces $   \mathcal{E}^{s}_{u,p,q}(\mathbb{R}^{d})  $. We observe that these Morrey smoothness spaces are pointwise multiplier spaces if certain conditions concerning the parameters also including $ s > d \max ( 0, \frac{1}{p} - 1)   $ are fulfilled. In this context we significantly generalize the Hölder inequalities for the original Besov and Triebel-Lizorkin spaces obtained by Sickel and Triebel in 1995. For the proofs we use paramultiplication and some Franke-Jawerth embeddings obtained by Haroske and    Skrzypczak.

\vspace{0,2 cm}

\noindent\textbf{Key words:} Besov-Morrey space, Triebel-Lizorkin-Morrey space, pointwise multiplication, paramultiplication, Franke-Jawerth embeddings 

\vspace{0,2 cm}

\noindent\textbf{Mathematics Subject Classification (2010):} 46E35

\section{Introduction and Main Results}

Nowadays Besov spaces $B^s_{p,q} (\R)$ and Triebel-Lizorkin spaces $F^s_{p,q} (\R)$ are well-established tools to describe the regularity of functions and distributions. The Besov spaces have been introduced by Nikol'skij \cite{Ni2} and Besov \cite{Be1959}, \cite{Be1961} between 1951 and 1961, whereas the Triebel-Lizorkin spaces have been introduced around 1970 by Lizorkin \cite{Liz1, Liz2} and Triebel \cite{Tr73}.  Later these function spaces have been investigated in detail in the books of Triebel~\cite{Tr83,Tr92,Tr06} which contain numerous other historical references. 
In recent years a growing number of authors works with further generalizations of these function spaces defined upon \emph{Morrey spaces} instead of Lebesgue spaces $L_p$. In connection with that Besov-Morrey spaces $  \mathcal{N}^{s}_{u,p,q}(\mathbb{R}^d) $ with $ 0 < p \leq u < \infty$, $ 0 < q \leq \infty $ and $ s \in \mathbb{R} $ showed up. They have been introduced by Kozono and Yamazaki \cite{KoYa} in 1994. On the other hand, Triebel-Lizorkin-Morrey spaces $ \mathcal{E}^{s}_{u,p,q}(\R) $ with $ 0 < p \leq u < \infty$, $ 0 < q \leq \infty $, $ s \in \mathbb{R} $ and Triebel-Lizorkin-type spaces $ F^{s,\tau}_{p,q}(\R) $ with $ 0 < p < \infty $, $ 0 < q \leq \infty $, $ s \in \mathbb{R} $, $ 0\leq \tau<\infty $ attracted a lot of attention. 
The spaces $ \mathcal{E}^{s}_{u,p,q}(\R) $ have been investigated for the first time by Tang and Xu in 2005, see \cite{TangXu}, while  
$F^{s,\tau}_{p,q}(\R) $ showed up for the first time in 2008 in some papers of Yang and Yuan~\cite{yy1,yy2}.  Later on, using a different notation, the latter also appeared in~\cite{Tr14}. 
Although the spaces $ \mathcal{E}^{s}_{u,p,q}(\R) $ and  $ F^{s,\tau}_{p,q}(\R) $ are defined quite differently, they have a lot of properties in common. 
Moreover, under certain conditions on the parameters they even coincide \cite{ysy}.

In this paper we want to prove Hölder inequalities for the Besov-Morrey spaces and the Triebel-Lizorkin-Morrey spaces. More precisely, we want to identify parameter constellations such that pointwise multiplication inequalities of the form 
\begin{equation}\label{eq_intro_task1}
\Vert f \cdot g \vert  \mathcal{A}^{s}_{u,p,q}(\mathbb{R}^d)   \Vert   \lesssim  \Vert  f   \vert   \mathcal{A}^{s}_{u_{1},p_{1},q_{1}}(\mathbb{R}^d) \Vert     \Vert g \vert  \mathcal{A}^{s}_{u_{2},p_{2},q_{2}}(\mathbb{R}^d)   \Vert
\end{equation}
with $\mathcal{A} \in \{ \mathcal{N} , \mathcal{E}  \}$ hold. There, the smoothness $s$ is fixed. Such inequalities play an important role within the solution theory of PDEs. For example, in \cite{BaaSch19}, \cite{BaaSch23} and \cite{BaaSchTr25} forerunners of \eqref{eq_intro_task1} have been used to prove the  existence and uniqueness of mild and strong solutions for fractional  nonlinear heat equations and hyperdissipative Navier-Stokes equations. However, in the aforementioned references only versions of \eqref{eq_intro_task1} for the original Besov and Triebel-Lizorkin spaces or for $ \mathcal{A}^{s}_{u,p,q}(\mathbb{R}^d)   $ with additional restrictive conditions concerning the smoothness parameter $s$ have been used. Therefore, inequalities of the form \eqref{eq_intro_task1} that hold for a large range of the parameters are in high demand. 

As already mentioned, versions of \eqref{eq_intro_task1} for the original Besov spaces $B^s_{p,q} (\R)$ and the original Triebel-Lizorkin spaces $F^s_{p,q} (\R)$ are well-known since many years. For instance, the following result concerning Hölder inequalities has been obtained by Sickel and Triebel already in 1995, see \cite[Theorem 4.2.1]{SiTri}. 

\begin{satz}\label{thm_hölder_historical1}
Let $s > 0$, $0 < p_{1} < \infty$, $0 < p_{2} < \infty $,  $0 < p  < \infty $, $ 0 < q_{1} \leq \infty    $, $ 0 < q_{2} \leq \infty    $  and  $ 0 < q \leq \infty    $. Let 
\begin{equation}\label{eq_hölder_meet_franke1_hist}
\frac{1}{r_{1}} = \frac{1}{p_{1}} - \frac{s}{d} > 0, \qquad \frac{1}{r_{2}} = \frac{1}{p_{2}} - \frac{s}{d} > 0, \qquad \frac{1}{r_{1}} + \frac{1}{r_{2}} = \frac{1}{r} = \frac{1}{p} - \frac{s}{d} < 1 .
\end{equation}
\begin{itemize}
\item[(i)] Then it holds
\begin{equation}\label{eq_main_N_asequation_histB}
B^{s}_{p_{1},q_{1}}(\mathbb{R}^d) B^{s}_{p_{2},q_{2}}(\mathbb{R}^d) \hookrightarrow B^{s}_{p,q}(\mathbb{R}^d) ,
\end{equation}
if and only if
\begin{equation}\label{eq_main_N_cond_qq1q2_histB}
0 < q_{1} \leq r_{1}, \qquad 0 < q_{2} \leq r_{2}, \qquad \max(q_{1},q_{2}) \leq q \leq \infty .
\end{equation}

\item[(ii)] In addition, it holds
\begin{equation}\label{eq_main_E_asequation_histF}
F^{s}_{p_{1},q_{1}}(\mathbb{R}^d) F^{s}_{p_{2},q_{2}}(\mathbb{R}^d) \hookrightarrow F^{s}_{p,q}(\mathbb{R}^d) ,
\end{equation}
if and only if
\begin{equation}\label{eq_main_E_cond_qq1q2_histF}
 \max(q_{1},q_{2}) \leq q \leq \infty .
\end{equation}
\end{itemize}
\end{satz}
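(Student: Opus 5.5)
Sufficiency and necessity are handled separately, starting with sufficiency in part (ii), which is the more flexible case. Write $fg = \Pi_1(f,g)+\Pi_2(f,g)+\Pi_3(f,g)$ with Bony's paraproducts
\[
\Pi_1(f,g)=\sum_{j} S_{j-2}f\,\Delta_j g,\qquad \Pi_2(f,g)=\sum_j \Delta_j f\, S_{j-2} g,\qquad \Pi_3(f,g)=\sum_{|j-k|\le 1}\Delta_j f\,\Delta_k g .
\]
The terms of $\Pi_1$ have Fourier support in annuli of size $2^j$, so
\[
\|\Pi_1(f,g)\,|F^s_{p,q}\|\lesssim \bigl\|\sup_j|S_{j-2}f|\,\bigl(\textstyle\sum_j 2^{jsq}|\Delta_j g|^q\bigr)^{1/q}\,|L_p\bigr\|.
\]
Applying Hölder with $\frac1p=\frac1{r_1}+\frac1{p_2}$ (indeed $\frac1p-\frac1{p_2}=\frac1{r_1}$) gives the bound $\|Mf|L_{r_1}\|\,\|g|F^s_{p_2,q}\|$. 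Since $r_1>p_1$, Sobolev embedding $F^s_{p_1,q_1}\hookrightarrow L_{r_1}$ for $s>0$ holds with any $q_1$, and $q\ge q_2$ gives $F^s_{p_2,q_2}\hookrightarrow F^s_{p_2,q}$. One point needs care: $r_1$ may satisfy $r_1\le 1$, in which case the maximal function is not bounded on $L_{r_1}$. There I would replace $\sup_j |S_{j-2}f|$ by the Peetre maximal function, or argue through the Franke–Jawerth type embedding $F^s_{p_1,q_1}\hookrightarrow F^0_{r_1,2}$ together with a square-function estimate for $\sup_j|S_jf|$ (valid for every $r_1>0$). $\Pi_2$ is symmetric. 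For $\Pi_3$ the Fourier supports are only balls, so $s>0$ and $s>d(1/p-1)_+$ are required. The latter follows from $\frac1r<1$, since $\frac1p-\frac sd<1$. Using the ball-support lemma,
\[
\|\Pi_3\,|F^s_{p,q}\|\lesssim \bigl\|\bigl(\textstyle\sum_j 2^{jsq}|\Delta_jf|^q|\tilde\Delta_j g|^q\bigr)^{1/q}\,|L_p\bigr\|,
\]
and I bound this by $\sup_j|\tilde\Delta_j g|\cdot(\sum 2^{jsq}|\Delta_j f|^q)^{1/q}$, reducing to the same estimate as for $\Pi_1$.

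For part (i) I repeat the decomposition, but the Besov norm takes $L_p$ inside. For $\Pi_1$ this gives
\[
\Bigl(\sum_j 2^{jsq}\|S_{j-2}f\,\Delta_jg\,|L_p\|^q\Bigr)^{1/q}\le \sup_j\|S_jf|L_{r_1}\|\,\|g|B^s_{p_2,q}\|.
\]
The difficulty is that $B^s_{p_1,q_1}\hookrightarrow L_{r_1}$ requires $q_1\le r_1$, which is exactly the assumed condition. The precise tool is the Franke–Jawerth embedding $B^s_{p_1,q_1}\hookrightarrow F^0_{r_1,2}\hookrightarrow L_{r_1}$ for $q_1\le r_1$, which yields uniform control of $\|S_jf|L_{r_1}\|$ also when $r_1<1$, where one works in $h_{r_1}$ instead. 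For $\Pi_3$ I use the analogous ball-support lemma for Besov spaces with $s>d(1/p-1)_+$, and then Hölder in each block:
\[
\|\Delta_jf\,\tilde\Delta_jg|L_p\|\le \|\Delta_jf|L_{p_1}\|\,\|\tilde\Delta_jg|L_{\tilde r}\|.
\]
Here the exponents are arranged as $\frac1p=\frac1{p_1}+\frac1{r_2}$, using once more $\sup_j\|\tilde\Delta_j g|L_{r_2}\|\lesssim\|g|B^s_{p_2,q_2}\|$ when $q_2\le r_2$. Finally, $q\ge\max(q_1,q_2)$ gives the $\ell_q$ summation.

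Necessity is established by test functions. For $q\ge\max(q_1,q_2)$, fix $g$ equal to a smooth bump $\psi$ that is $1$ on a large ball, and let $f=\sum_j a_j 2^{-js}\phi(2^j x - x_j)$ be a lacunary sum of oscillating atoms, with frequency $\sim 2^j$ and placed inside the region where $g=1$. Then $fg=f$ and the norms reduce to $\|a|\ell_{q_1}\|\gtrsim\|a|\ell_q\|$, forcing $q_1\le q$. For $q_1\le r_1$ in the Besov case, I use the fact that $B^s_{p_1,q_1}\hookrightarrow L_{r_1,\mathrm{loc}}$ fails when $q_1>r_1$, which is known from the sharp Sobolev embedding via a sum of dilated bumps concentrated at the origin $f=\sum_j b_j 2^{jd/r_1}\phi(2^jx)$. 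Multiplying by a $g\in B^s_{p_2,q_2}$ with the same type of singularity produces $fg\notin B^s_{p,q}$, e.g.\ by duality or by observing that $fg\notin L_{r,\mathrm{loc}}$ while $B^s_{p,q}\hookrightarrow L_{r}$.

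The main obstacle I expect is precisely this last necessity construction. Choosing $g$ so that the product detects the failure of $q_1\le r_1$ at the level of $B^s_{p,q}$ requires a careful computation of norms of radial lacunary sums. Next in difficulty is the quasi-Banach case $r_1\le 1$ in the sufficiency part.
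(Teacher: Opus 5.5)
Your sufficiency argument follows the same route as the paper's proof of its Morrey generalization. Note that the paper itself only quotes this theorem from Sickel--Triebel, and its own proof, of the Morrey version, covers only sufficiency. The shared steps are the paraproduct splitting, Hölder with $\frac1p=\frac1{r_1}+\frac1{p_2}$, a maximal bound for $\sup_j|S_jf|$, and the Franke--Jawerth embedding $B^s_{p_1,r_1}\hookrightarrow F^0_{r_1,2}=L_{r_1}$. Your Besov diagonal term, handled via $\frac1p=\frac1{p_1}+\frac1{r_2}$ and the Besov ball lemma (which only needs $s>\sigma_p$), is a valid variant of the paper's route. Your worry about $r_1\le 1$ is unfounded: $\frac1{r_1}<\frac1{r_1}+\frac1{r_2}=\frac1r<1$ forces $r_1,r_2>1$.

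The genuine gap in sufficiency is $\Pi_3$ in part (ii). For $F$-spaces the ball-support lemma (Proposition~\ref{prop:dyadic_crit} with $u=p$) requires $s>\sigma_{p,q}=d\max(0,\frac1p-1,\frac1q-1)$, not $s>d(\frac1p-1)_+$. Since $q\ge\max(q_1,q_2)$ may be small, the hypotheses do not give this. For example, $d=1$, $s=\frac14$, $p_1=p_2=2$, $p=\frac43$, $q_1=q_2=q=\frac12$ gives $\sigma_{p,q}=1>s$. The missing idea is the one in Substep~2.3 of the paper. Set $\frac1t=\frac1{p_1}+\frac1{p_2}$ and estimate $\Pi_3$ in $F^{2s}_{t,\infty}$, where the ball lemma only needs $2s>\sigma_t$; this is equivalent to $\frac1p-\frac sd<1$. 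There $\|\sup_j 2^{2js}|\Delta_jf\,\tilde\Delta_jg|\,|\,L_t\|\le\|\sup_j2^{js}|\Delta_jf|\,|\,L_{p_1}\|\,\|\sup_j2^{js}|\tilde\Delta_jg|\,|\,L_{p_2}\|$. Then apply the Jawerth embedding $F^{2s}_{t,\infty}\hookrightarrow F^s_{p,q}$, which holds for every $q$ because $2s-\frac dt=s-\frac dp$ and $t<p$.

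The necessity part does not work as written.

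\emph{The test for $q\ge q_1$.} Your atoms $\phi(2^jx-x_j)$ are dilated, so their $L_{p_1}$- and $L_p$-norms carry different factors, $2^{-jd/p_1}$ and $2^{-jd/p}$. Because $\frac1p-\frac1{p_1}=\frac1{r_2}>0$, the resulting inequality $\|(a_j2^{-jd/p})\,|\,\ell_q\|\lesssim\|(a_j2^{-jd/p_1})\,|\,\ell_{q_1}\|$ holds for all $q,q_1$ and detects nothing. For $F$-spaces, atoms with separated supports essentially do not see the fine index at all. Use unit-scale modulated bumps instead: $f=\sum_ja_j2^{-js}e^{i2^jx_1}\psi(x)$ with $\widehat\psi$ supported in a small ball, and $g$ fixed with $\widehat g$ supported near $0$. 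Then $\|f\,|\,A^s_{p_1,q_1}\|\sim\|a\,|\,\ell_{q_1}\|$ and $\|fg\,|\,A^s_{p,q}\|\sim\|a\,|\,\ell_q\|$ for $A\in\{B,F\}$.

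\emph{The argument for $q_1\le r_1$.} This fails for two reasons. First, once $q\ge q_1>r_1>r$, the space $B^s_{p,q}$ does not embed into $L_r$, so $fg\notin L_{r,\mathrm{loc}}$ gives no contradiction. Second, the product of two radial lacunary bump sums is again such a sum with coefficients roughly $b_jc_j$. Its $B^s_{p,q}$-norm is bounded by $\|b\,|\,\ell_\infty\|\,\|c\,|\,\ell_q\|+\|c\,|\,\ell_\infty\|\,\|b\,|\,\ell_q\|$, so no obstruction appears.

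\emph{Where the obstruction actually lies.} It sits in $\sum_jS_{j-2}f\,\Delta_jg$ with $g$ oscillating at high frequency. Test with $g=\psi e^{i\lambda x_1}$, $\widehat\psi$ compactly supported, $\lambda\to\infty$, and $f$ replaced by $S_Nf$. This gives $\lambda^s\|S_Nf\,\psi\,|\,L_p\|\lesssim\|f\,|\,B^s_{p_1,q_1}\|\,\lambda^s\|\psi\,|\,L_{p_2}\|$, so $f$ must multiply $L_{p_2}$ into $L_p$. By the converse of Hölder's inequality this forces $B^s_{p_1,q_1}\hookrightarrow L_{r_1}$, which fails for $q_1>r_1$ by the sharp embedding. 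This is consistent with (ii), since $F^s_{p_1,q_1}\hookrightarrow L_{r_1}$ always holds.
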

Since it holds $\mathcal{E}^{s}_{p,p,q}(\mathbb{R}^{d}) = F^{s}_{p,q}(\R)$ and $\mathcal{N}^{s}_{p,p,q}(\mathbb{R}^{d}) = B^{s}_{p,q}(\R)$, Theorem \ref{thm_hölder_historical1} can be interpreted as a special case of \eqref{eq_intro_task1}. For the proof of Theorem \ref{thm_hölder_historical1} in \cite{SiTri} so-called paramultiplication and a special splitting for products of the form $f \cdot g$ has been used. This proof technique has been invented independently by Peetre \cite{Pee76} and Triebel \cite{Tr77}, see \cite[Remark 6.3]{ysy}. Now we turn to the case of the much more general Besov-Morrey spaces $ \mathcal{N}^{s}_{u,p,q}(\R) $ and Triebel-Lizorkin-Morrey spaces $ \mathcal{E}^{s}_{u,p,q}(\R) $. For $ \mathcal{E}^{s}_{u,p,q}(\R) $ an early result in the spirit of \eqref{eq_intro_task1} can be found in \cite[Theorem 6.3]{ysy}, see also \cite[Theorem 3.60]{Tr14}. There it has been proved that the spaces $ \mathcal{E}^{s}_{u,p,q}(\R) $ are algebras with respect to pointwise multiplication. In both references the additional assumption $ \mathcal{E}^{s}_{u,p,q}(\R) \hookrightarrow L_{\infty} (\mathbb{R}^d)   $ has been used. However, by \cite[Proposition 2.7]{HaSk2014} this implies $  s > \frac{d}{u}   $. Below, in this paper we prove an advanced version of \eqref{eq_intro_task1} that holds for 
\begin{align*}
s > d \max \Big ( 0, \frac{1}{p} - 1  \Big ) .
\end{align*} 
Consequently, in particular for $ p \geq 1   $ we can cover a much larger range of the parameters. The main result of this paper reads as follows.

\begin{satz}\label{thm_main_hölder1}
Let $s > 0$, $0 < p_{1} \leq u_{1} < \infty$, $0 < p_{2} \leq u_{2} < \infty $,  $0 < p \leq u < \infty $, $ 0 < q_{1} \leq \infty    $, $ 0 < q_{2} \leq \infty    $  and  $ 0 < q \leq \infty    $. Let 
\begin{equation}\label{eq_hölder_meet_franke1}
\frac{1}{r_{1}} = \frac{1}{p_{1}} - \frac{s}{d} > 0, \qquad \frac{1}{r_{2}} = \frac{1}{p_{2}} - \frac{s}{d} > 0, \qquad \frac{1}{r_{1}} + \frac{1}{r_{2}} = \frac{1}{r} = \frac{1}{p} - \frac{s}{d} < 1
\end{equation}
and
\begin{equation}\label{eq_hölder_meet_franke2}
\frac{1}{v_{1}} = \frac{1}{u_{1}} - \frac{s}{d} > 0, \qquad \frac{1}{v_{2}} = \frac{1}{u_{2}} - \frac{s}{d} > 0, \qquad \frac{1}{v_{1}} + \frac{1}{v_{2}} = \frac{1}{v} = \frac{1}{u} - \frac{s}{d} < 1 .
\end{equation}
\begin{itemize}
\item[(i)] Then it holds
\begin{equation}\label{eq_main_N_asequation}
\mathcal{N}^{s}_{u_{1},p_{1},q_{1}}(\mathbb{R}^d) \mathcal{N}^{s}_{u_{2},p_{2},q_{2}}(\mathbb{R}^d) \hookrightarrow \mathcal{N}^{s}_{u,p,q}(\mathbb{R}^d) ,
\end{equation}
if
\begin{equation}\label{eq_main_N_cond_qq1q2}
0 < q_{1} \leq r_{1}, \qquad 0 < q_{2} \leq r_{2}, \qquad \max(q_{1},q_{2}) \leq q \leq \infty .
\end{equation}

\item[(ii)] Moreover, it holds
\begin{equation}\label{eq_main_E_asequation}
\mathcal{E}^{s}_{u_{1},p_{1},q_{1}}(\mathbb{R}^d) \mathcal{E}^{s}_{u_{2},p_{2},q_{2}}(\mathbb{R}^d) \hookrightarrow \mathcal{E}^{s}_{u,p,q}(\mathbb{R}^d) ,
\end{equation}
if
\begin{equation}\label{eq_main_E_cond_qq1q2}
 \max(q_{1},q_{2}) \leq q \leq \infty .
\end{equation}
\end{itemize}
\end{satz}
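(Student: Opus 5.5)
We follow the Sickel--Triebel argument for Theorem \ref{thm_hölder_historical1}, replacing $L_p$ by Morrey spaces $\mathcal{M}^u_p(\R)$ and the classical Franke--Jawerth embeddings by the Morrey versions of Haroske and Skrzypczak. Let $(\varphi_j)_{j\ge 0}$ be a smooth dyadic resolution of unity, and write $f_k=\cfi\varphi_k\cf f$ and $S_k f=\sum_{l\le k} f_l$. We use the paraproduct splitting
\begin{align*}
f\cdot g = \Pi_1(f,g)+\Pi_2(f,g)+\Pi_3(f,g), \qquad \Pi_1=\sum_{k\ge 2} S_{k-2}f\, g_k,\quad \Pi_2=\sum_{k\ge 2} f_k\, S_{k-2}g,\quad \Pi_3=\sum_{|k-l|\le 1} f_k\, g_l .
\end{align*}
By symmetry it suffices to treat $\Pi_1$ and $\Pi_3$. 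The numbers $r_i,v_i$ in \eqref{eq_hölder_meet_franke1}, \eqref{eq_hölder_meet_franke2} are exactly the Sobolev-type targets. We invoke the Haroske--Skrzypczak Franke--Jawerth embeddings
\begin{align*}
\mathcal{N}^{s}_{u_i,p_i,q_i}\hookrightarrow \mathcal{E}^{0}_{v_i,r_i,\,\cdot} \quad (q_i\le r_i), \qquad \mathcal{E}^{s}_{u_i,p_i,q_i}\hookrightarrow \mathcal{N}^{0}_{v_i,r_i,\,\cdot}\hookrightarrow \mathcal{M}^{v_i}_{r_i}
\end{align*}
with suitable fine indices. The restriction $q_i\le r_i$ in part (i) enters precisely through the first of these embeddings.

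\emph{Term $\Pi_1$.} The summand $S_{k-2}f\,g_k$ has spectrum in an annulus of size $2^k$, so
\begin{align*}
\|\Pi_1|\mathcal{A}^s_{u,p,q}\| \lesssim \Big\| \big(2^{ks} S_{k-2}f\, g_k\big)_k \Big\| ,
\end{align*}
measured in $\ell_q(\mathcal{M}^u_p)$ for $\mathcal{N}$ or in $\mathcal{M}^u_p(\ell_q)$ for $\mathcal{E}$. We bound $|S_{k-2}f|\le \sup_l|S_lf|\lesssim$ a Peetre maximal function of $f$. Then we apply the Hölder inequality for Morrey spaces, $\mathcal{M}^{u}_{p}\supset \mathcal{M}^{v_1}_{r_1}\cdot\mathcal{M}^{u_2}_{p_2}$, which is valid because $1/p=1/r_1+1/p_2$ and $1/u=1/v_1+1/u_2$. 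This gives the bound
\begin{align*}
\|\sup_l|S_lf|\,|\mathcal{M}^{v_1}_{r_1}\| \cdot \big\|(2^{ks}g_k)\big\|_{\ell_q(\mathcal{M}^{u_2}_{p_2})\text{ or }\mathcal{M}^{u_2}_{p_2}(\ell_q)} .
\end{align*}
The second factor is bounded by $\|g|\mathcal{A}^s_{u_2,p_2,q}\|\le \|g|\mathcal{A}^s_{u_2,p_2,q_2}\|$, using $q\ge q_2$. The first factor is a maximal-function bound for $f$ in $\mathcal{E}^0_{v_1,r_1,\infty}$. It follows from the Franke--Jawerth embedding into $\mathcal{E}^0_{v_1,r_1,\infty}$ together with the Fefferman--Stein inequality on Morrey spaces, which needs $r_1>$ the relevant $p$-power (handle via Peetre maximal functions with $a>d/\min(r_1,1)$). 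For the $\mathcal{N}$ case this is where $q_1\le r_1$ is needed: $\mathcal{N}^s_{u_1,p_1,q_1}\hookrightarrow \mathcal{E}^0_{v_1,r_1,\infty}$ holds for $q_1\le r_1$.

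\emph{Term $\Pi_3$.} The summands have spectrum only in balls $|\xi|\lesssim 2^k$, so we use the ball-support lemma in Morrey spaces. That lemma requires $s>d\max(0,1/p-1)$ (or its $\mathcal{E}$-analogue with $\min(p,q)$), and this is where $1/r<1$, i.e.\ $s>d(1/p-1)$, enters. It gives
\begin{align*}
\|\Pi_3|\mathcal{A}^s_{u,p,q}\|\lesssim \Big\|\big(2^{ks}f_k g_k\big)\Big\| .
\end{align*}
We then estimate $|f_k|\le \sup_l|f_l|$ and argue as for $\Pi_1$. This step is the main obstacle. In the $\mathcal{E}$ case the $\ell_q$-index in the ball lemma is a problem: for $q<1$ or $q<p$ we may need to first enlarge $q$ using $q\ge\max(q_1,q_2)$. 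More seriously, the Morrey ball-support lemma with the exact threshold $s>d\max(0,1/p-1)$ instead of $d/\min(p,q)$ must be secured. I would try to pass through $\mathcal{N}^s_{u,p,\infty}$-type estimates and real interpolation-free $\ell_q$ summation, exploiting $s>0$ to gain geometric decay via Nikol'skij-type inequalities in Morrey spaces.
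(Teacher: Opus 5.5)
\textbf{Main gap: $\Pi_3$ in part (ii).} You flag this term as the main obstacle but do not resolve it. Applying Proposition \ref{prop:dyadic_crit} directly in $\mathcal{E}^s_{u,p,q}(\R)$ requires $s>\sigma_{p,q}$. Nothing in the hypotheses prevents $q=\max(q_1,q_2)$ from being so small that $s\le d(\frac1q-1)$. Enlarging $q$ goes the wrong way: $\mathcal{E}^s_{u,p,q}(\R)$ increases with $q$, so you may only reduce to the smallest admissible $q$, never pass to a larger one. The closing suggestion about Besov-type estimates and Nikol'skij inequalities is not an argument.

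The missing idea is to apply the ball criterion at a different level and then come down by a Sobolev embedding. Put $\frac1t=\frac{1}{p_1}+\frac{1}{p_2}$ and $\frac1w=\frac{1}{u_1}+\frac{1}{u_2}$. Then $2s-\frac{d}{w}=s-\frac{d}{u}$, $w<u$ (because $s>0$) and $\frac{p}{u}\le\frac{t}{w}$. So Lemma \ref{lem_FJ_4} gives $\mathcal{E}^{2s}_{w,t,\infty}(\R)\hookrightarrow\mathcal{E}^{s}_{u,p,q}(\R)$ for \emph{every} $q$; the fine indices are free exactly because $w\neq u$. In $\mathcal{E}^{2s}_{w,t,\infty}(\R)$ the ball criterion only needs $2s>\sigma_t$, which (given $s>0$) is equivalent to $\frac1r<1$. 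It yields the bound $\max_{|j|\le 1}\Vert \sup_k 2^{2ks}|f_k g_{k+j}| \,\vert \mathcal{M}^w_t(\R)\Vert$. Splitting $2^{2ks}=2^{ks}2^{ks}$ and using the plain Morrey H\"older inequality bounds this by $\Vert f\vert\mathcal{E}^s_{u_1,p_1,\infty}(\R)\Vert\,\Vert g\vert\mathcal{E}^s_{u_2,p_2,\infty}(\R)\Vert$. This is Substep 2.3 of the proof.

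\textbf{Repairable errors in $\Pi_1$.} First, $\Vert \sup_l|S_lf|\,\vert\mathcal{M}^{v_1}_{r_1}(\R)\Vert$ is not bounded by $\Vert f\vert\mathcal{E}^0_{v_1,r_1,\infty}(\R)\Vert$. That would give $\mathcal{E}^0_{v_1,r_1,\infty}(\R)\hookrightarrow\mathcal{M}^{v_1}_{r_1}(\R)$, which is false already for $v_1=r_1$. What holds is $\sup_l|S_lf|\lesssim Mf$. Since $\frac{1}{r_1}<\frac1r<1$ gives $r_1>1$, you need $f\in\mathcal{M}^{v_1}_{r_1}(\R)=\mathcal{E}^0_{v_1,r_1,2}(\R)$, cf.\ Lemma \ref{l_bp1}(vi) and Lemma \ref{lem_mor_eq_hardy}. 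In part (i) this costs nothing: Lemma \ref{lem_FJ_2} allows any target fine index, so $\mathcal{N}^s_{u_1,p_1,r_1}(\R)\hookrightarrow\mathcal{E}^0_{v_1,r_1,2}(\R)$.

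Second, in part (ii) your chain $\mathcal{E}^s_{u_1,p_1,q_1}\hookrightarrow\mathcal{N}^0_{v_1,r_1,\cdot}\hookrightarrow\mathcal{M}^{v_1}_{r_1}$ is unavailable when $p_1<u_1$. The Morrey Franke--Jawerth embedding of $\mathcal{E}$ into $\mathcal{N}$ (Lemma \ref{lem_FJ_1}) only gives target fine index $\infty$, and $\mathcal{N}^0_{v_1,r_1,\infty}(\R)$ is not contained in $\mathcal{M}^{v_1}_{r_1}(\R)$. Use instead the Sobolev-type embedding $\mathcal{E}^s_{u_1,p_1,q_1}(\R)\hookrightarrow\mathcal{E}^0_{v_1,r_1,2}(\R)$ from Lemma \ref{lem_FJ_4}, noting $u_1<v_1$ and $\frac{r_1}{v_1}\le\frac{p_1}{u_1}$.

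\textbf{What does work differently.} Your treatment of $\Pi_3$ in part (i) is correct and differs from the paper's. The Besov--Morrey ball lemma at level $(s;u,p)$ needs only $s>\sigma_p$. Then $|f_k|\le\sup_l|f_l|$, H\"older with $\frac1p=\frac{1}{r_1}+\frac{1}{p_2}$, $\frac1u=\frac{1}{v_1}+\frac{1}{u_2}$, and $\mathcal{N}^s_{u_1,p_1,r_1}(\R)\hookrightarrow\mathcal{E}^0_{v_1,r_1,\infty}(\R)$ finish it. The paper instead goes through $\mathcal{N}^{2s}_{w,t,q}(\R)$ via Lemma \ref{lem_FJ_3} and an $\ell_{2q}$-H\"older argument.
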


Theorem \ref{thm_main_hölder1} provides the so-called Hölder inequalities for $ \mathcal{N}^{s}_{u,p,q}(\R) $ and $ \mathcal{E}^{s}_{u,p,q}(\R) $. There, \eqref{eq_main_N_asequation} and \eqref{eq_main_E_asequation} can be interpreted as \eqref{eq_intro_task1} with $\mathcal{A} \in \{ \mathcal{N} , \mathcal{E}  \}$. In the special case $  p_{1} = u_{1} $, $ p_{2} = u_{2} $ and  $ p = u  $ the assumptions \eqref{eq_hölder_meet_franke1} and \eqref{eq_hölder_meet_franke2} completely coincide. Then it holds $r_{1} = v_{1}$, $r_{2} = v_{2}$ and $r = v$. Since then also \eqref{eq_hölder_meet_franke1} and \eqref{eq_hölder_meet_franke1_hist} coincide, Theorem \ref{thm_main_hölder1} can be seen as a generalization of Theorem \ref{thm_hölder_historical1}. We provide some further explanations concerning Theorem \ref{thm_main_hölder1}, that at least partly also can be found in Figure \ref{fig_Figure1} below. First of all, we observe that as a consequence of \eqref{eq_hölder_meet_franke1} and \eqref{eq_hölder_meet_franke2} the range of parameters is restricted to
\begin{align*}
\frac{d}{p}  \geq \frac{d}{u} > s > \sigma_{p} \geq \sigma_{u} .
\end{align*} 
If $s$ approaches zero and $q_{1} = q_{2} = q = 2$, then \eqref{eq_main_E_asequation} tends to the classical Hölder inequality for Morrey spaces  
\begin{align*}
\Vert f \cdot g \vert  \mathcal{M}^{v}_{r}(\mathbb{R}^d)   \Vert   \lesssim  \Vert  f   \vert   \mathcal{M}^{v_{1}}_{r_{1}}(\mathbb{R}^d) \Vert     \Vert g \vert  \mathcal{M}^{v_{2}}_{r_{2}}(\mathbb{R}^d)   \Vert
\end{align*}
with 
\begin{align*}
 \frac{1}{r} = \frac{1}{r_{1}} + \frac{1}{r_{2}} < 1 \qquad \mbox{and} \qquad  \frac{1}{v} = \frac{1}{v_{1}} + \frac{1}{v_{2}} < 1  \qquad \mbox{and} \qquad r \leq v, \quad r_{1} \leq v_{1}, \quad r_{2} \leq v_{2}. 
\end{align*}
The interrelation of the parameters $r, r_{1}, r_{2}$ also is illustrated by the axis $s = 0$ in Figure \ref{fig_Figure1}. For Morrey smoothness spaces with positive smoothness $ s > 0 $ the Hölder inequality is shifted along lines with slope $d$ to the smoothness level $s$. Then the parameters are connected in the sense 
\begin{equation}\label{eq_hölder_meet_franke1_explain1}
\frac{1}{r_{1}} + \frac{s}{d} = \frac{1}{\tilde{p}_{1}} , \qquad \frac{1}{r_{2}} + \frac{s}{d} = \frac{1}{\tilde{p}_{2}} , \qquad  \frac{1}{r} + \frac{s}{d} = \frac{1}{\tilde{p}} 
\end{equation}
and
\begin{equation}\label{eq_hölder_meet_franke2_explain2}
\frac{1}{v_{1}} + \frac{s}{d}  = \frac{1}{\tilde{u}_{1}} , \qquad \frac{1}{v_{2}} + \frac{s}{d} = \frac{1}{\tilde{u}_{2}} , \qquad  \frac{1}{v} + \frac{s}{d} = \frac{1}{\tilde{u}} .
\end{equation}
Here the relations \eqref{eq_hölder_meet_franke1_explain1} are illustrated in Figure \ref{fig_Figure1}. In order to depict \eqref{eq_hölder_meet_franke2_explain2} one can think of an $ (\frac{1}{u},s)-    $diagram similar to Figure \ref{fig_Figure1} where $  p, r_{1}, r_{2}, r,    \tilde{p}_{1}, \tilde{p}_{2} , \tilde{p}   $ are replaced by $  u, v_{1}, v_{2}, v,    \tilde{u}_{1}, \tilde{u}_{2} , \tilde{u}   $. Finally, let us mention that Figure \ref{fig_Figure1} has some similarities with \cite[Page 106, Figure 1]{SiTri}, where the Hölder inequalities for the classical Besov and Triebel-Lizorkin spaces have been discussed. 

\begin{figure}[h]
\centering
\begin{tikzpicture}[thick]
\draw[->] (-2,0) -- (10,0) ;
\draw[->] (-2,0) -- (-2,6) ;
\draw (-2,0) -- (1,6) ;
\draw (7,-0.1) -- (7,0.1) ;

\draw[dotted] (0,0) -- (2,4) ;
\draw[dotted] (4,0) -- (6,4) ;
\draw[dotted] (6,0) -- (8,4) ;

\draw[dotted] (-2,4) -- (10,4) ;

\draw (7,0) -- (10,6) ;



\node at (-2.3,5.8) {$s$} ;
\node at (1.8,6) {$s= \frac{d}{p}$} ;
\node at (11.1,6) {$s=  \frac{d}{p} - d$} ;
\node at (7,-0.5) {$1$} ;
\node at (-1.8,-0.3) {$0$} ;
\node at (10,-0.5) {$ \frac{1}{p}$} ;

\node at (0,-0.5) {$ \frac{1}{r_{1}}$} ;
\node at (4,-0.5) {$ \frac{1}{r_{2}}$} ;
\node at (6,-0.5) {$ \frac{1}{r}$} ;

\draw (0,0) circle (0.1cm) ;
\draw (4,0) circle (0.1cm) ;
\draw (6,0) circle (0.1cm) ;

\node at (2,4.5) {$ \frac{1}{\tilde{p}_{1}}$} ;
\node at (6,4.5) {$ \frac{1}{\tilde{p}_{2}}$} ;
\node at (8,4.5) {$ \frac{1}{\tilde{p}}$} ;

\draw (2,4) circle (0.1cm) ;
\draw (6,4) circle (0.1cm) ;
\draw (8,4) circle (0.1cm) ;

\end{tikzpicture}
\caption{Parameter constellation in Theorem \ref{thm_main_hölder1}.}
\label{fig_Figure1}
\end{figure}

It is the main goal of this paper to prove Theorem \ref{thm_main_hölder1}. For that purpose in Section \ref{sect:prelim} we recall some basic properties of the Besov-Morrey spaces and the Triebel-Lizorkin-Morrey spaces. In particular, we collect some Franke-Jawerth embeddings obtained by Haroske and Skrzypczak in \cite{HaSk2014}. In Section \ref{sec_paramultiplication} we recall the concept of paramultiplication and prove some Fourier multiplier theorems for the function spaces of interest. Finally, in Section \ref{sec_proof_mainresult} we put everything together in order to prove Theorem \ref{thm_main_hölder1}. However, first of all we fix some notation.

\medskip

\noindent\textbf{Notation:} As usual, $\N$ denotes the natural numbers, $\N_0:=\N\cup\{0\}$, $\zz$ describes the integers and~$\re$ the real numbers. Further, $\R$ with $d\in\N$ denotes the $d$-dimensional Euclidean space and we put
$$
    B(x,t) := \left\{ y \in \R : \abs{x-y}< t \right\}\, , \qquad x \in \R,\,\; t>0.
$$
All functions are assumed to be complex-valued. That means we consider functions $f\colon \R \to \com$.  $\mathcal{S}(\R)$ is the collection of all Schwartz functions on $\R$ endowed with the usual topology and by $\mathcal{S}'(\R)$ we denote its topological dual, the space of all bounded linear functionals on~$\mathcal{S}(\R)$ equipped with the weak-$\ast$ topology. 
The symbol $\cf$ refers to the Fourier transform and $\cfi$ to its inverse, both defined on $\cs'(\R)$. 
For domains (open connected sets) $\Omega\subseteq\R$ and $0<v\leq \infty$ by $L_v^\loc(\Omega)$ we mean the set of locally $v$-integrable (or locally essentially bounded) functions on $\Omega$. Furthermore, $\mathcal{D}(\Omega)=C_0^\infty(\Omega)$ denotes the set of infinitely often differentiable functions with compact support on $\Omega$. 
Its topological dual $\mathcal{D}'(\Omega)$ is the space of distributions on $\Omega$.
Almost all function spaces considered in this paper are subspaces of regular distributions from $\cs'(\R)$, interpreted as spaces of equivalence classes with respect to almost everywhere equality. 
Given two quasi-Banach spaces $X$ and $Y$, the norm of a linear operator $T\colon X\to Y$ is denoted by $\norm{T \sep \cl (X,Y)}$. 
Moreover, we write $X \hookrightarrow Y$ if the natural embedding of $X$ into $Y$ is continuous. 
For $0<p<\infty$ and $0<q\leq \infty$ we shall use the well-established quantities
$$
   \sigma_p:= d\,  \max\!\left\{0, \frac 1p - 1\right\} \qquad \text{and}\qquad 
    \sigma_{p,q}:= d\, \max\!\left\{0, \frac 1p -1 , \frac 1q - 1 \right\}.
$$
The symbols $C, C_1, c, c_{1}, \ldots$ denote positive constants depending only on the fixed parameters  and probably on auxiliary functions. 
Unless otherwise stated their values may vary from line to line. 
With $A \lesssim B$ we mean $ A \leq C B  $ for a constant $ C > 0 $ independent of $A$ and $B$. The notation $ A \sim B $ stands for $A \lesssim B$ and $B \lesssim A$.

\section{Besov-Morrey and Triebel-Lizorkin-Morrey Spaces: Definitions and Basic Properties}\label{sect:prelim}

Besov-Morrey spaces $ \mathcal{N}^{s}_{u,p,q}(\R)$ and Triebel-Lizorkin-Morrey spaces $ \mathcal{E}^{s}_{u,p,q}(\R)$ are built upon Morrey spaces $\mathcal{M}^{u}_{p}(\R)$. Therefore, at first we recall the definition of the latter. 

\begin{defi} \label{def_mor}
Let $ 0 < p \leq u < \infty$. Then the Morrey space $\mathcal{M}^{u}_{p}(\R)$ is the collection of all functions $ f \in L_{p}^{\loc}(\R) $ such that
    \begin{align*}
        \Vert f \vert \mathcal{M}^{u}_{p}(\R) \Vert
        := \sup_{y \in \R, r > 0} \vert B(y,r) \vert^{\frac{1}{u}-\frac{1}{p}} \Big ( \int_{B(y,r)} \abs{f(x)}^{p} dx  \Big )^{\frac{1}{p}} < \infty.
    \end{align*} 
\end{defi}
The Morrey spaces $ \mathcal{M}^{u}_{p}(\R)$ are known to be quasi-Banach spaces. For $ p \geq 1$ they are Banach spaces.  They have many connections to ordinary Lebesgue spaces $ L_{p}(\R)$.  Indeed, for $ 0 < p_{2} \leq p_{1} \leq u < \infty $ we have
\begin{align*}
    L_{u}(\R) 
    = \mathcal{M}^{u}_{u}(\R) 
    \hookrightarrow \mathcal{M}^{u}_{p_{1}}(\R)
    \hookrightarrow \mathcal{M}^{u}_{p_{2}}(\R).
\end{align*} 
In order to define the Besov-Morrey spaces $ \mathcal{N}^{s}_{u,p,q}(\R)$ and the Triebel-Lizorkin-Morrey spaces  $\mathcal{E}^{s}_{u,p,q}(\R)$ we need a so-called smooth dyadic decomposition of unity. Let $\varphi_0 \in C_0^{\infty}({\R})$ be a non-negative function such that $\varphi_0(x) = 1$ if $\abs{x}\leq 1$ and $ \varphi_0 (x) = 0$ if $\abs{x}\geq \frac{3}{2}$. 
For $k\in \N$ we define
$$
    \varphi_k(x) := \varphi_0(2^{-k}x) - \varphi_0(2^{-k+1}x),\qquad\ x \in \R. 
$$
Then 
$$
    \sum_{k=0}^\infty \varphi_k(x) = 1, \qquad x \in \R, 
$$
and
\begin{equation}\label{eq_supp_varphik}
    \supp \varphi_k \subset \Big \{x\in \R \sep 2^{k-1}\le \abs{x} \le 2^{k+1}\Big \}, \qquad k \in \N,
\end{equation}
which justifies the name smooth dyadic decomposition of unity for the system $(\varphi_k)_{k\in \N_0 }$.  Moreover, using the Paley-Wiener-Schwartz theorem \cite[Theorem 2 in Chapter 1.2.1]{Tr83} we find that for all $f\in\mathcal{S}'(\R)$ the distributions $\cfi[\varphi_{k}\, \cf f]\in\mathcal{S}'(\R)$ with $k\in\N_0$ are actually smooth functions on $\R$. 
This allows for the following definition of Besov-Morrey and Triebel-Lizorkin-Morrey spaces.

\begin{defi}\label{def_bms}
Let $ s \in \mathbb{R}$, $ 0 < p \leq u < \infty $ and $ 0 < q \leq \infty $. Further, let $ (\varphi_{k})_{k\in \N_0 }$ be a smooth dyadic decomposition of unity. Then the Besov-Morrey space $  \mathcal{N}^{s}_{u,p,q}(\mathbb{R}^{d}) $ is the collection of all distributions $ f \in \mathcal{S}'(\mathbb{R}^{d})$ such that
\begin{align*} 
    \Vert f \vert \mathcal{N}^{s}_{u,p,q}(\mathbb{R}^{d}) \Vert
    :=  \Big ( \sum_{k = 0}^{\infty} 2^{ksq}   \Vert \mathcal{F}^{-1}[\varphi_{k} \mathcal{F}f]  \vert \mathcal{M}^{u}_{p}(\R)   \Vert^{q} \Big )^{\frac{1}{q}} < \infty .
\end{align*}
In the case $ q = \infty $ the usual modifications are made.
\end{defi}

\begin{defi}
\label{def_tlm}
    Let $ s \in \mathbb{R}$,  $ 0 < p \leq u < \infty$ and $0 < q \leq \infty$. Further, let $ (\varphi_{k})_{k\in \N_0 }$ be a smooth dyadic decomposition of unity. Then the Triebel-Lizorkin-Morrey space $  \mathcal{E}^{s}_{u,p,q}(\mathbb{R}^{d})$ collects all $ f \in \mathcal{S}'(\mathbb{R}^{d})$ for that
    \begin{align*} 
        \Vert f \vert \mathcal{E}^{s}_{u,p,q}(\mathbb{R}^{d}) \Vert := \Big \Vert  \Big ( \sum_{k = 0}^{\infty} 2^{ksq} \vert \mathcal{F}^{-1}[\varphi_{k} \mathcal{F}f](\cdot) \vert^{q} \Big )^{\frac{1}{q}} \Big \vert \mathcal{M}^{u}_{p}(\R) \Big \Vert  < \infty.
    \end{align*}
    If $ q = \infty$, the usual modifications are made.
\end{defi}

Let us collect some well-known basic properties of Besov-Morrey and Triebel-Lizorkin-Morrey spaces. Most of them will be used in proofs later on. 

\begin{lem}\label{l_bp1}
    Let $ s \in \mathbb{R} $,  $ 0 < p \leq u < \infty $ and $ 0 < q \leq \infty $. Let $ \mathcal{A} \in \{ \mathcal{E}, \mathcal{N}   \}   $.  Then the following holds.
    \begin{enumerate}
        \item $\mathcal{A}^{s}_{u,p,q}(\mathbb{R}^{d})$ is independent of the chosen smooth dyadic decomposition of unity in the sense of equivalent quasi-norms. 
        
        \item The spaces $  \mathcal{A}^{s}_{u,p,q}(\mathbb{R}^{d}) $ are quasi-Banach spaces. For $ p,q \geq 1 $ they are Banach spaces.

        \item With $\tau := \min\{1,p,q\}$  we have
        $$  
            \Vert f + g  \vert \mathcal{A}^{s}_{u,p,q}(\mathbb{R}^{d}) \Vert^{\tau} \leq \Vert f \vert \mathcal{A}^{s}_{u,p,q}(\mathbb{R}^{d}) \Vert^{\tau} + \Vert g \vert \mathcal{A}^{s}_{u,p,q}(\mathbb{R}^{d}) \Vert^{\tau}, \qquad f,g \in \mathcal{A}^{s}_{u,p,q}(\mathbb{R}^{d}).
        $$
        
        \item $\mathcal{S}(\mathbb{R}^{d}) \hookrightarrow    \mathcal{A}^{s}_{u,p,q}(\mathbb{R}^{d}) \hookrightarrow   \mathcal{S}'(\mathbb{R}^{d})$.

        \item $\mathcal{E}^{s}_{p,p,q}(\mathbb{R}^{d}) = F^{s}_{p,q}(\R)$ and $\mathcal{N}^{s}_{p,p,q}(\mathbb{R}^{d}) = B^{s}_{p,q}(\R)$.

        \item If $p>1$, then $\mathcal{E}^{0}_{u,p,2}(\mathbb{R}^{d}) = \mathcal{M}^{u}_{p}(\R)$.
    \end{enumerate}
\end{lem}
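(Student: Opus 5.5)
This lemma collects standard facts, so the plan is to derive each item from known tools for Morrey-type spaces rather than to develop a new argument. The central tool is the Fefferman--Stein vector-valued maximal inequality on Morrey spaces. For $0<p\leq u<\infty$ and $1<p<\infty$, $1<q\leq\infty$, it states
\begin{align*}
\Big\Vert \Big( \sum_{k} \vert Mf_{k} \vert^{q} \Big)^{\frac{1}{q}} \Big\vert \mathcal{M}^{u}_{p}(\R) \Big\Vert \lesssim \Big\Vert \Big( \sum_{k} \vert f_{k} \vert^{q} \Big)^{\frac{1}{q}} \Big\vert \mathcal{M}^{u}_{p}(\R) \Big\Vert .
\end{align*}
This is proved in the Tang--Xu setting by splitting each $f_k$ into a part localized to a doubled ball and a remainder. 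The local part is handled by the classical $L_p$ Fefferman--Stein theorem. The remainder is handled by the pointwise estimate $Mf(x)\lesssim \sum_{j} 2^{-jd/p}\Vert f\vert L_p(B(y,2^{j+1}r))\Vert$, which sums geometrically because $1/u-1/p\le 0$ combines with the $2^{-jd/p}$ factor.

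Item 1 (independence of the decomposition) follows the classical route. For two decompositions $(\varphi_k)$ and $(\psi_k)$, write $\mathcal{F}^{-1}[\psi_k\mathcal{F}f]=\sum_{|j-k|\le 1}\mathcal{F}^{-1}[\psi_k\varphi_j\mathcal{F}f]$. Each summand is estimated pointwise by the Peetre maximal function $\sup_y |\mathcal{F}^{-1}[\varphi_j\mathcal{F}f](x-y)|(1+2^j|y|)^{-a}$. This in turn is bounded by $(M|\mathcal{F}^{-1}[\varphi_j\mathcal{F}f]|^{t})^{1/t}$ for $a>d/t$. Choosing $t<\min(p,q)$, the vector-valued maximal inequality in $\mathcal{M}^{u/t}_{p/t}$ concludes the $\mathcal{E}$ case; the scalar maximal inequality concludes the $\mathcal{N}$ case. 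I expect this to be the main obstacle, because the maximal inequality on Morrey spaces must be established first and the case $q=\infty$ needs care.

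Item 3 holds because $\Vert\cdot\vert\mathcal{M}^u_p\Vert^{\min(1,p)}$ is subadditive, being a supremum of $L_p$ quasi-norms. Combining this with the $\ell_q$ triangle inequality raised to the power $\min(1,q)$ gives $\tau$-subadditivity.

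For item 4, the embedding $\mathcal{S}\hookrightarrow\mathcal{A}$ comes from decay bounds $|\mathcal{F}^{-1}[\varphi_k\mathcal{F}f](x)|\lesssim 2^{-kN}(1+|x|)^{-N}$ controlled by Schwartz seminorms, together with $\Vert(1+|\cdot|)^{-N}\vert\mathcal{M}^u_p\Vert<\infty$ for $N>d/p$. The embedding $\mathcal{A}\hookrightarrow\mathcal{S}'$ starts from the Nikol'skij-type inequality $\Vert g\vert L_\infty\Vert\lesssim 2^{kd/u}\Vert g\vert\mathcal{M}^u_p\Vert$ for $\supp\mathcal{F}g\subset B(0,2^{k+1})$. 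This gives $|\langle f,\phi\rangle|\le\sum_k|\langle \mathcal{F}^{-1}[\varphi_k\mathcal{F}f],\tilde\phi_k\rangle|$, which is summable against Schwartz seminorms. The same bound, applied to partial sums, gives completeness in item 2 via Fatou's lemma. The Banach case of item 2 follows from item 3 with $\tau=1$.

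Item 5 is immediate, since $\mathcal{M}^p_p=L_p$ and the definitions coincide with those of $F^s_{p,q}$ and $B^s_{p,q}$.

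For item 6, I would use the Littlewood--Paley characterization of $\mathcal{M}^u_p$ for $p>1$, reducing to a vector-valued Calderón--Zygmund estimate on Morrey spaces. One direction applies this estimate to the $\ell_2$-valued operator $f\mapsto(\mathcal{F}^{-1}[\varphi_k\mathcal{F}f])_k$. The other direction uses duality through a randomized (Khintchine) argument: with random signs $\pm$, the operator $\sum_k\pm\mathcal{F}^{-1}[\tilde\varphi_k\varphi_k\mathcal{F}f]$ is bounded on $\mathcal{M}^u_p$ uniformly in the signs. The boundedness of Calderón--Zygmund operators on $\mathcal{M}^u_p$ is proved by the same local/remainder splitting as the maximal inequality.
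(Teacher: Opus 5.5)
Your treatment of (i)--(v), and of the inclusion $\mathcal{M}^u_p(\R)\hookrightarrow\mathcal{E}^0_{u,p,2}(\R)$ in (vi), is sound. The paper proves none of this itself: it only cites Tang--Xu, Yuan--Sickel--Yang, Kozono--Yamazaki, Sawano--Tanaka and Mazzucato. Your sketch reconstructs the standard arguments behind those references: Fefferman--Stein on Morrey spaces with Peetre maximal functions, the Nikol'skij bound $2^{kd/u}$ together with Fatou, and a local/tail splitting for the $\ell_2$-valued analysis operator.

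\textbf{The gap: the converse inclusion in (vi).} This is $\mathcal{E}^0_{u,p,2}(\R)\hookrightarrow\mathcal{M}^u_p(\R)$, and your argument for it does not work. Uniform boundedness of the sign multipliers $\sum_k\pm\cfi[\varphi_k\cf f]$ on $\mathcal{M}^u_p$, combined with Khintchine on each ball, only gives $\Vert (\sum_k|f_k|^2)^{1/2}\vert\mathcal{M}^u_p\Vert\lesssim\sup_{\pm}\Vert\sum_k\pm f_k\vert\mathcal{M}^u_p\Vert\lesssim\Vert f\vert\mathcal{M}^u_p\Vert$. That is the direction you already have.

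Reversing it would require $\mathbb{E}\Vert\sum_k\varepsilon_kf_k\vert\mathcal{M}^u_p\Vert\lesssim\Vert(\sum_k|f_k|^2)^{1/2}\vert\mathcal{M}^u_p\Vert$, and no Khintchine-type lattice argument can give this. For $p<u$, normalized bumps on sufficiently far-apart balls span $\ell_\infty^n$ uniformly inside $\mathcal{M}^u_p$. So the space has no finite cotype, and this estimate fails for general sequences.

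The $L_p$ duality proof, in contrast, needs the sign operators and square function bounded on a predual of $\mathcal{M}^u_p$ (block spaces), not on $\mathcal{M}^u_p$ itself, and you do not introduce one. Finally, equality of the spaces also requires showing that an $f\in\mathcal{S}'(\R)$ with finite $\mathcal{E}^0_{u,p,2}$-quasi-norm is a locally integrable function. Your sketch never addresses this.

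\textbf{A repair within your own framework.} Fix $B=B(y,r)$ and $g\in C_0^\infty(B)$ with $\Vert g\vert L_{p'}\Vert\le 1$. Write $\langle f,g\rangle=\sum_k\langle f_k,\tilde g_k\rangle$, where $\tilde g_k$ are the pieces of $g$ for $\tilde\varphi_k=\varphi_{k-1}+\varphi_k+\varphi_{k+1}$. Then $|\langle f,g\rangle|\le\int Sf\cdot\tilde Sg$, with $S,\tilde S$ the corresponding square functions. Estimate the integral region by region:
\begin{itemize}
\item On $2B$, use H\"older and the $L_{p'}$-boundedness of $\tilde S$.
\item On $2^{j+1}B\setminus 2^jB$, use $\tilde Sg(x)\lesssim(2^jr)^{-d}\Vert g\vert L_1\Vert\le(2^jr)^{-d}|B|^{1/p}$, since the $\ell_2$-norm of the kernel is $\lesssim|x|^{-d}$. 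Also use $\int_{2^{j+1}B}Sf\le|2^{j+1}B|^{1-1/u}\Vert Sf\vert\mathcal{M}^u_p\Vert$. These contributions are $\lesssim 2^{-jd/u}|B|^{1/p-1/u}\Vert Sf\vert\mathcal{M}^u_p\Vert$.
\end{itemize}
Summing gives $|\langle f,g\rangle|\lesssim|B|^{1/p-1/u}\Vert f\vert\mathcal{E}^0_{u,p,2}\Vert$. Hence $f|_B\in L_p(B)$ with the Morrey bound, which yields both the regularity of $f$ and the norm estimate. Alternatively, treat the synthesis operator $(g_k)_k\mapsto\sum_k\cfi[\tilde\varphi_k\cf g_k]$ as a Calder\'on--Zygmund operator with kernel values in $\mathcal{L}(\ell_2,\com)$, using the same local/tail splitting. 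In that case you still need a separate argument for regularity.
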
 

\begin{proof}
    Assertion (i) was proved in \cite[Theorem 2.8]{TangXu}. The proofs of (ii) and (iii) are standard. We refer to \cite[Lemma 2.1]{ysy} and to \cite[Corollary 2.6]{KoYa}. (iv) is proven in \cite[Proposition 2.3]{ysy} and in \cite[Theorem 3.2]{SawTan}. Finally, assertion (v) is obvious and (vi) has been shown in \cite[Proposition 4.1]{maz}.  
\end{proof}

A convenient way to show membership of distributions in $\mathcal{E}^s_{u,p,q}(\mathbb{R}^d)$ is given by the so-called \emph{dyadic ball criterion}. It has been proved by Yuan, Sickel and Yang in \cite[Proposition 6.2]{ysy}.
\begin{prop} \label{prop:dyadic_crit}
    Let $0 < p \leq u < \infty$, $0 < q \leq \infty$ and $s > \sigma_{p,q}$. 
    Further, let $(\widetilde{u}_k)_{k\in\N_0}\subset \mathcal{S}'(\R)$ satisfy 
    $$
        \supp(\mathcal{F}\widetilde{u}_k) \subseteq B(0,2^{k+2}), \qquad k \in \mathbb{N}_{0}, 
    $$
    and
    $$
        A:= \Big \Vert  \Big ( \sum_{k=0}^\infty 2^{ksq} \vert \widetilde{u}_k(\cdot) \vert^q \Big )^{\frac{1}{q}}  \Big \vert \mathcal{M}^u_p(\R) \Big \Vert < \infty
    $$
    (with the usual modification if $q=\infty$).
    Then $\sum_{k=0}^\infty \widetilde{u}_k$ converges in $\mathcal{S}'(\R)$ to some $U\in \mathcal{S}'(\R)$ and there holds
    $$
        U \in \mathcal{E}^s_{u,p,q}(\R) 
        \qquad\text{with}\qquad
        \Vert U \vert  \mathcal{E}^s_{u,p,q}(\R) \Vert \lesssim A,
    $$
    where the implicit constant does not depend on $ (\widetilde{u}_k)_{k\in\N_0}$ or $A$. 
\end{prop}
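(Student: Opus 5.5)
The plan is to follow the standard route for such criteria: a Nikol'skij-type convolution estimate for band-limited functions, the Fefferman--Stein maximal inequality in Morrey spaces, and a geometric sum that converges precisely because $s>\sigma_{p,q}$.

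\textbf{Step 1 (choice of $a$ and the convolution estimate).} Because $s > \sigma_{p,q}$, I fix $0<a<\min(1,p,q)$ with $s > d(\frac1a - 1)$. Write $\psi_j := \cfi \varphi_j$, so $\cfi[\varphi_j \cf \widetilde u_k] = \psi_j * \widetilde u_k$. The support conditions give $\varphi_j \cf \widetilde u_k = 0$ unless $k \geq j-3$. For $k \geq j-3$ I would use the known inequality for functions with spectrum in $B(0,2^{k+2})$:
\begin{align*}
|\psi_j * \widetilde u_k(x)| \lesssim 2^{kd(\frac1a-1)} \Big( \int_{\R} |\psi_j(y)|^a |\widetilde u_k(x-y)|^a \, dy \Big)^{\frac1a}.
\end{align*}
This follows from the Plancherel--Polya--Nikol'skij inequality applied to $y \mapsto \psi_j(y)\widetilde u_k(x-y)$, whose spectrum lies in a ball of radius $\sim 2^{k}$. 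Next I use $|\psi_j(y)|^a \lesssim 2^{jda}(1+2^j|y|)^{-N}$ and decompose into dyadic annuli around $x$. This yields
\begin{align*}
|\psi_j * \widetilde u_k(x)| \lesssim 2^{(k-j)d(\frac1a-1)} \big( M(|\widetilde u_k|^a)(x) \big)^{\frac1a},
\end{align*}
where $M$ is the Hardy--Littlewood maximal operator.

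\textbf{Step 2 (summation).} Multiplying by $2^{js}$ gives
\begin{align*}
2^{js}|\psi_j * U(x)| \lesssim \sum_{k\geq j-3} 2^{-(k-j)(s-d(\frac1a-1))}\, 2^{ks}\big(M(|\widetilde u_k|^a)(x)\big)^{\frac1a}.
\end{align*}
The kernel $2^{-(k-j)\delta}$ with $\delta = s - d(\frac1a - 1)>0$ is geometrically decaying. A discrete Young inequality in $\ell_q$ (or its $q$-triangle inequality version when $q<1$) then bounds the $\ell_q$-norm in $j$ by the $\ell_q$-norm in $k$ of $2^{ks}(M(|\widetilde u_k|^a))^{1/a}$.

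\textbf{Step 3 (Morrey norm).} It remains to estimate
\begin{align*}
\Big\Vert \Big(\sum_k \big(M(2^{ksa}|\widetilde u_k|^a)\big)^{\frac qa}\Big)^{\frac aq} \Big\vert \mathcal M^{u/a}_{p/a}(\R)\Big\Vert^{\frac1a}.
\end{align*}
Since $p/a>1$ and $q/a>1$, the vector-valued Fefferman--Stein inequality in Morrey spaces (Tang--Xu) bounds this by $A$.

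\textbf{Convergence in $\mathcal S'(\R)$.} This is the step I expect to be the main obstacle. The spectra of $\widetilde u_k$ are balls rather than annuli, so there is no cancellation to exploit, and a crude $L_\infty$ bound only gives $2^{k(d/u - s)}A$, which need not be summable. My plan is to test against $\phi\in\cs(\R)$ and split $\phi=\sum_{j} \psi_j*\phi$. Only $j \leq k+3$ contributes, by the same support argument. I then write $\langle \widetilde u_k, \psi_j * \phi\rangle = \langle \widetilde\psi_j * \widetilde u_k, \phi\rangle$, where $\widetilde\psi_j$ is the reflection of $\psi_j$.

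For each such pair I apply the Step 1 estimate. Combining it with $\Vert \widetilde u_k\vert \mathcal M^u_p\Vert \leq 2^{-ks}A$ and the polynomial growth bound of band-limited Morrey functions against the decay of $\phi$ should give $|\langle \widetilde u_k,\psi_j*\phi\rangle| \lesssim 2^{-k\delta} 2^{-j N'} A\, p_M(\phi)$ for some Schwartz seminorm $p_M$. Summing over $j\leq k+3$ and then over $k$ converges because $\delta>0$. This proves convergence in $\mathcal S'(\R)$ and justifies the identity $\psi_j * U = \sum_{k\geq j-3}\psi_j*\widetilde u_k$ used in Step 2. The bound $\Vert U \vert \mathcal{E}^s_{u,p,q}(\R)\Vert \lesssim A$ then follows.
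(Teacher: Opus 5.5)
The paper does not prove this proposition; it quotes it from \cite[Proposition 6.2]{ysy}, so there is no in-paper argument to compare against. Your Steps 1--3 are the standard argument for such ball criteria, and they are correct. The choice $0<a<\min(1,p,q)$ with $\delta=s-d(\frac1a-1)>0$, the Nikol'skij bound, the geometric summation in $\ell_q$, and the Fefferman--Stein inequality in $\mathcal{M}^{u/a}_{p/a}(\R)$ (with $p/a>1$, $q/a>1$) give $\Vert U\vert\mathcal{E}^s_{u,p,q}(\R)\Vert\lesssim A$, once convergence in $\mathcal{S}'(\R)$ is known.

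The gap is in that convergence step, and it bites when $p<1$. After moving $\psi_j$ onto $\widetilde u_k$ and applying the Step 1 estimate, you must bound $2^{(k-j)d(\frac1a-1)}\int_{\R}\big(M(|\widetilde u_k|^a)(x)\big)^{1/a}|\phi(x)|\,dx$. However, $(M(|\widetilde u_k|^a))^{1/a}$ is controlled only in $\mathcal{M}^u_p(\R)$. For $p<1$ that space is not contained in $L_1^{\loc}(\R)$, so this integral is not bounded by $\Vert\widetilde u_k\vert\mathcal{M}^u_p(\R)\Vert$. If you compensate with the band-limited sup bound $\Vert\widetilde u_k\Vert_{L_\infty}\lesssim 2^{kd/u}\Vert\widetilde u_k\vert\mathcal{M}^u_p(\R)\Vert$, you bring back exactly the loss you identified. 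Even interpolating, which costs $2^{kd(1-p)/u}$, you only get convergence under the stronger condition $s>\sigma_{p,q}+\frac{d(1-p)}{u}$. Also, once $\psi_j$ sits on $\widetilde u_k$, $\phi$ enters only through its size, so the factor $2^{-jN'}$ in your claimed bound cannot come out of this chain. For $p\geq 1$ your route does work, since then $\int_{B(0,R)}|F|\,dx\le|B(0,R)|^{1-\frac1u}\Vert F\vert\mathcal{M}^u_p(\R)\Vert$.

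The fix uses your own tool, applied differently. Apply the Nikol'skij inequality directly to the Schwartz function $h_{jk}:=\widetilde u_k\,(\psi_j*\phi)$, whose spectrum lies in $B(0,2^{k+5})$ for $j\le k+3$. Keep the $L_a$-integral rather than passing to maximal functions: $|\langle\widetilde u_k,\psi_j*\phi\rangle|\le\Vert h_{jk}\Vert_{L_1}\lesssim 2^{kd(\frac1a-1)}\Vert h_{jk}\Vert_{L_a}$. Then combine two facts:
\begin{itemize}
\item the decay $|(\psi_j*\phi)(y)|\lesssim 2^{-jN}(1+|y|)^{-L}p_M(\phi)$;
\item since $a<p$, H\"older's inequality on balls gives $\int_{B(0,R)}|\widetilde u_k|^a\,dy\le|B(0,R)|^{1-\frac au}\Vert\widetilde u_k\vert\mathcal{M}^u_p(\R)\Vert^a$.
\end{itemize}
A dyadic decomposition with $La>d$ yields $\Vert h_{jk}\Vert_{L_a}\lesssim 2^{-jN}2^{-ks}A\,p_M(\phi)$. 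This is exactly $|\langle\widetilde u_k,\psi_j*\phi\rangle|\lesssim 2^{-k\delta}2^{-jN}A\,p_M(\phi)$. Summing over $j\le k+3$ and over $k$ gives $\sum_k|\langle\widetilde u_k,\phi\rangle|\lesssim A\,p_M(\phi)$, so $U\in\mathcal{S}'(\R)$, and the rest of your argument goes through.
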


Later on, we will need embeddings of the form $ \mathcal{N}^{s_{1}}_{u_{1},p_{1},q_{1}}(\mathbb{R}^d) \hookrightarrow   \mathcal{N}^{s_{2}}_{u_{2},p_{2},q_{2}}(\mathbb{R}^d)  $ and $ \mathcal{E}^{s_{1}}_{u_{1},p_{1},q_{1}}(\mathbb{R}^d) \hookrightarrow   \mathcal{E}^{s_{2}}_{u_{2},p_{2},q_{2}}(\mathbb{R}^d)  $. Fortunately, the required embeddings already have been obtained by Haroske and Skrzypczak in \cite{HaSk2012} and \cite{HaSk2014}. For convenience of the reader we recall some of their results, but restrict ourselves to those needed below.

\begin{lem}\label{lem_FJ_3}
Let $s_{1}, s_{2} \in \mathbb{R}$, $0 < p_{1} \leq u_{1} < \infty$, $0 < p_{2} \leq u_{2} < \infty$, $ 0 < q_{1} \leq \infty$ and $ 0 < q_{2} \leq \infty$. 
Let 
\begin{align*}
u_{1} \leq u_{2} \quad \mbox{and} \quad \frac{p_{2}}{u_{2}} \leq \frac{p_{1}}{u_{1}} \quad \mbox{and} \quad s_{1} - \frac{d}{u_{1}} = s_{2} - \frac{d}{u_{2}} \quad \mbox{and} \quad q_{1} \leq q_{2} .
\end{align*}
Then it holds $ \mathcal{N}^{s_{1}}_{u_{1},p_{1},q_{1}}(\mathbb{R}^d) \hookrightarrow   \mathcal{N}^{s_{2}}_{u_{2},p_{2},q_{2}}(\mathbb{R}^d)  $.
\end{lem}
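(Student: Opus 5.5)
The plan is to reduce the embedding to a Nikol'skij-type inequality for band-limited functions in Morrey spaces, applied separately to each dyadic block, and then to use the monotonicity of $\ell_q$-norms. Put $f_k := \mathcal{F}^{-1}[\varphi_k \mathcal{F} f]$, so that $\supp \mathcal{F} f_k \subset B(0,2^{k+1})$. The key intermediate claim is
\begin{equation*}
\Vert f_k \vert \mathcal{M}^{u_2}_{p_2}(\R) \Vert \lesssim 2^{kd(\frac{1}{u_1}-\frac{1}{u_2})} \Vert f_k \vert \mathcal{M}^{u_1}_{p_1}(\R) \Vert , \qquad k \in \N_0 ,
\end{equation*}
with a constant independent of $k$ and $f$. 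Granting this, the relation $s_2 = s_1 - \frac{d}{u_1} + \frac{d}{u_2}$ gives
\begin{equation*}
2^{ks_2}\Vert f_k \vert \mathcal{M}^{u_2}_{p_2}(\R)\Vert \lesssim 2^{ks_1}\Vert f_k \vert \mathcal{M}^{u_1}_{p_1}(\R)\Vert .
\end{equation*}
The assertion then follows from $\ell_{q_1}\hookrightarrow \ell_{q_2}$ for $q_1\le q_2$.

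\textbf{Step 1: a sup-norm bound.} I would first show $\Vert f_k \vert L_\infty\Vert \lesssim 2^{kd/u_1}\Vert f_k\vert \mathcal{M}^{u_1}_{p_1}(\R)\Vert$. The starting point is the standard Plancherel--Polya--Nikol'skij estimate for functions whose Fourier transform is supported in $B(0,2^{k+1})$, obtained through the Peetre maximal function:
\begin{equation*}
|f_k(x)|^{p_1} \lesssim 2^{kd}\int_{\R} |f_k(y)|^{p_1}(1+2^k|x-y|)^{-N}\,dy \qquad \text{for large } N .
\end{equation*}
Next I split $\R$ into $B(x,2^{-k})$ and the annuli $B(x,2^{j-k})\setminus B(x,2^{j-k-1})$, $j\ge1$. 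By the definition of the Morrey norm,
\begin{equation*}
\int_{B(x,2^{j-k})}|f_k|^{p_1} \le c\, 2^{(j-k)d(1-p_1/u_1)}\Vert f_k\vert\mathcal{M}^{u_1}_{p_1}(\R)\Vert^{p_1} .
\end{equation*}
Summing against the weights $2^{-jN}$ leaves $2^{kd}\cdot 2^{-kd(1-p_1/u_1)} = 2^{kdp_1/u_1}$, as required.

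\textbf{Step 2: estimating over balls $B=B(y,r)$.} For small balls, $r\le 2^{-k}$, I use Step 1 directly:
\begin{equation*}
|B|^{\frac1{u_2}-\frac1{p_2}}\Big(\int_B|f_k|^{p_2}\Big)^{1/p_2}\le |B|^{1/u_2}\Vert f_k\vert L_\infty\Vert \lesssim 2^{kd(\frac1{u_1}-\frac1{u_2})}\Vert f_k\vert \mathcal{M}^{u_1}_{p_1}(\R)\Vert .
\end{equation*}
Here I used $|B|^{1/u_2}\lesssim 2^{-kd/u_2}$. For large balls, $r\ge 2^{-k}$, there are two cases.
\begin{itemize}
\item[(a)] If $p_2\le p_1$, Hölder's inequality on $B$ bounds the expression by $|B|^{\frac1{u_2}-\frac1{p_1}}(\int_B|f_k|^{p_1})^{1/p_1}\le |B|^{\frac1{u_2}-\frac1{u_1}}\Vert f_k\vert\mathcal{M}^{u_1}_{p_1}(\R)\Vert$. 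Since $u_1\le u_2$ and $r\ge 2^{-k}$, this is at most $2^{kd(\frac1{u_1}-\frac1{u_2})}\Vert f_k\vert\mathcal{M}^{u_1}_{p_1}(\R)\Vert$.
\item[(b)] If $p_2>p_1$, I write $\int_B|f_k|^{p_2}\le \Vert f_k\vert L_\infty\Vert^{p_2-p_1}\int_B|f_k|^{p_1}$ and insert Step 1. The $p_2$-th power of the expression is then bounded by $2^{kd(p_2-p_1)/u_1}|B|^{\frac{p_2}{u_2}-\frac{p_1}{u_1}}\Vert f_k\vert\mathcal{M}^{u_1}_{p_1}(\R)\Vert^{p_2}$. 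Exactly here the hypothesis $\frac{p_2}{u_2}\le\frac{p_1}{u_1}$ enters: the exponent of $|B|$ is nonpositive, so with $r\ge2^{-k}$ the factor is at most $2^{-kd(\frac{p_2}{u_2}-\frac{p_1}{u_1})}$. The powers of $2$ then combine to $2^{kdp_2(\frac1{u_1}-\frac1{u_2})}$, and taking the $p_2$-th root gives the claim.
\end{itemize}
Taking the supremum over all balls proves the Nikol'skij inequality.

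The main obstacle is Step 1, because it needs the maximal-function estimate for band-limited functions with $p_1<1$. I would take this directly from the classical theory (Triebel's book, Section 1.3) rather than reprove it, and check only that the annulus summation converges for $N> d$. The remaining steps are elementary bookkeeping with exponents.
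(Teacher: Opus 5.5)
Your proof is correct. It necessarily takes a different route from the paper, because the paper gives no argument for this lemma. The paper simply quotes it from Haroske--Skrzypczak \cite{HaSk2014,HaSk2012}, where the embedding appears as part of a characterization of embeddings between Besov--Morrey spaces that also shows the conditions are necessary.

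You instead prove sufficiency directly from the Fourier-analytic definition. The tool is the blockwise Nikol'skij inequality $\Vert f_k \vert \mathcal{M}^{u_2}_{p_2}(\R)\Vert \lesssim 2^{kd(\frac{1}{u_1}-\frac{1}{u_2})}\Vert f_k \vert \mathcal{M}^{u_1}_{p_1}(\R)\Vert$, with a constant independent of $k$. The bookkeeping checks out:
\begin{itemize}
\item In Step~1 the annular decomposition produces $2^{kd}\cdot 2^{-kd(1-p_1/u_1)}=2^{kdp_1/u_1}$, and the series converges once $N>d$.
\item The small-ball case uses only Step~1.
\item Case (a) uses $u_1\le u_2$ and case (b) uses $\frac{p_2}{u_2}\le\frac{p_1}{u_1}$, each exactly where it is needed, and both give the same power $2^{kd(\frac{1}{u_1}-\frac{1}{u_2})}$.
\end{itemize}

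The only non-elementary input is the weighted pointwise bound $|f_k(x)|^{p_1}\lesssim 2^{kd}\int_{\R}|f_k(y)|^{p_1}(1+2^k|x-y|)^{-N}\,dy$ for $p_1<1$. This is indeed classical via the Peetre maximal function. Its finiteness needs only that the band-limited $f_k$ has polynomial growth. Rescaling from $B(0,1)$ to $B(0,2^{k+1})$ keeps the constant uniform in $k$.

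Your route gives a short, self-contained proof at the level of Definition~\ref{def_bms}, and it yields a reusable Nikol'skij inequality for Morrey spaces. The cited route additionally gives the converse: the necessity of $u_1\le u_2$, of $\frac{p_2}{u_2}\le\frac{p_1}{u_1}$, and of $q_1\le q_2$ in the limiting case. You do not address that direction, but the paper never uses it.
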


\begin{proof}
This result can be found in \cite[Eq. (2.1)]{HaSk2014}, see also \cite{HaSk2012}.
\end{proof}

\begin{lem}\label{lem_FJ_4}
Let $s_{1}, s_{2} \in \mathbb{R}$, $0 < p_{1} \leq u_{1} < \infty$, $0 < p_{2} \leq u_{2} < \infty$, $ 0 < q_{1} \leq \infty$ and $ 0 < q_{2} \leq \infty$. 
Let 
\begin{align*}
u_{1} \leq u_{2} \quad \mbox{and} \quad \frac{p_{2}}{u_{2}} \leq \frac{p_{1}}{u_{1}} \quad \mbox{and} \quad s_{1} - \frac{d}{u_{1}} = s_{2} - \frac{d}{u_{2}} \quad \mbox{and} \quad u_{1} \neq u_{2} .
\end{align*}
Then it holds $ \mathcal{E}^{s_{1}}_{u_{1},p_{1},q_{1}}(\mathbb{R}^d) \hookrightarrow   \mathcal{E}^{s_{2}}_{u_{2},p_{2},q_{2}}(\mathbb{R}^d)  $.
\end{lem}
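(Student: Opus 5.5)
Lemma \ref{lem_FJ_4} is the Jawerth--Franke type embedding for Triebel-Lizorkin-Morrey spaces with equal differential dimension $s_1 - d/u_1 = s_2 - d/u_2$. In the strict case $u_1 < u_2$ the fine indices $q_1$ and $q_2$ are arbitrary. The natural plan is to cite Haroske and Skrzypczak \cite{HaSk2014}, as was done for Lemma \ref{lem_FJ_3}. Still, the proof I would give (or check) runs as follows.

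\emph{Reduction to $q_1=\infty$, $q_2$ small.} Since $\mathcal{E}^{s_1}_{u_1,p_1,q_1} \hookrightarrow \mathcal{E}^{s_1}_{u_1,p_1,\infty}$ and $\mathcal{E}^{s_2}_{u_2,p_2,\varepsilon} \hookrightarrow \mathcal{E}^{s_2}_{u_2,p_2,q_2}$ for $\varepsilon \le q_2$, it suffices to prove
\[
\mathcal{E}^{s_1}_{u_1,p_1,\infty} \hookrightarrow \mathcal{E}^{s_2}_{u_2,p_2,\varepsilon}
\]
for small $\varepsilon$. Moreover, by the monotonicity $\mathcal{M}^{u}_{p} \hookrightarrow \mathcal{M}^{u}_{\tilde p}$ for $\tilde p\le p$, the target may be replaced by the largest admissible space, namely the one with $p_2 = u_2 p_1/u_1$.

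\emph{Main step.} Set $f_k = \mathcal{F}^{-1}[\varphi_k \mathcal{F} f]$. I would use the Peetre maximal function $f_k^*(x)=\sup_y |f_k(x-y)|(1+2^k|y|)^{-a}$ together with the Nikol'skij type inequality
\[
\|f_k\|_{L_\infty} \lesssim 2^{kd/u_1}\|f_k \,|\, \mathcal{M}^{u_1}_{p_1}\|,
\]
which gives $2^{ks_2}|f_k| \lesssim 2^{k s_1}\|f\,|\,\mathcal{E}^{s_1}_{u_1,p_1,\infty}\|$. Let $G(x)=\sup_k 2^{ks_1}|f_k(x)|$, so that $\|G\,|\,\mathcal{M}^{u_1}_{p_1}\| = \|f\,|\,\mathcal{E}^{s_1}_{u_1,p_1,\infty}\|$. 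Pointwise I split
\[
\Big(\sum_k 2^{ks_2\varepsilon}|f_k|^\varepsilon\Big)^{1/\varepsilon}
\]
at a level $K(x)$. For $k\le K$ I bound the terms by $G(x)$, obtaining $\lesssim 2^{-K d/u_1\cdot(\ldots)}$-weighted $G(x)$, i.e.\ $\lesssim 2^{K(s_2-s_1)}G(x)$ since $s_2<s_1$. For $k>K$ I use the sup bound, obtaining $\lesssim 2^{K(s_2-s_1+d/u_1)}\|f\|$. Optimising $K$ yields, as in the classical Jawerth--Franke/Gagliardo--Nirenberg argument,
\[
\Big(\sum_k 2^{ks_2\varepsilon}|f_k|^\varepsilon\Big)^{1/\varepsilon} \lesssim G(x)^{u_1/u_2}\,\|f\|^{1-u_1/u_2}.
\]
Here the exponent $u_1/u_2$ comes from $(s_1-s_2)/(d/u_1) = 1-u_1/u_2$.

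\emph{Conclusion.} It remains to check the Morrey norm, using
\[
\|G^{u_1/u_2} \,|\, \mathcal{M}^{u_2}_{p_1u_2/u_1}\| = \|G\,|\,\mathcal{M}^{u_1}_{p_1}\|^{u_1/u_2},
\]
which holds because the powers scale exactly. The main obstacle I expect is the justification of the Nikol'skij inequality in Morrey spaces uniformly in $k$ (a local $L_{p_1}$ mean over a ball of radius $2^{-k}$ controls $\sup|f_k|$ via a Plancherel--Polya type estimate). A second concern is the endpoint $p_2=u_2p_1/u_1$, which is exactly the condition $p_2/u_2\le p_1/u_1$ in the statement. The condition $u_1\neq u_2$ is used so that $s_1>s_2$, which makes the geometric sums converge for arbitrary $q$.
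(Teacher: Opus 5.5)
The paper does not prove this lemma; it only quotes \cite[Theorem 2.1]{HaSk2014}. You instead give a self-contained argument: the classical pointwise (Jawerth/Gagliardo--Nirenberg type) interpolation, transplanted to Morrey spaces. The strategy is sound. Put $G=\sup_k 2^{ks_1}|f_k|$ and $\|f\|:=\|f\,|\,\mathcal{E}^{s_1}_{u_1,p_1,\infty}(\mathbb{R}^d)\|=\|G\,|\,\mathcal{M}^{u_1}_{p_1}(\mathbb{R}^d)\|$. The pointwise bound $\bigl(\sum_k 2^{ks_2\varepsilon}|f_k(x)|^{\varepsilon}\bigr)^{1/\varepsilon}\lesssim G(x)^{u_1/u_2}\|f\|^{1-u_1/u_2}$, combined with the exact scaling identity $\|G^{u_1/u_2}\,|\,\mathcal{M}^{u_2}_{p_1u_2/u_1}\|=\|G\,|\,\mathcal{M}^{u_1}_{p_1}\|^{u_1/u_2}$, gives the embedding. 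It also shows exactly where $p_2/u_2\le p_1/u_1$ and $u_1\neq u_2$ enter; the latter means $s_1>s_2$, which is why $q_1,q_2$ are irrelevant. The citation buys brevity; your route buys an elementary proof that rests only on a Nikol'skij inequality.

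As written, however, your main step has slips you must fix.

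First, the Nikol'skij inequality gives $2^{ks_2}|f_k(x)|\lesssim 2^{k(s_2-s_1+d/u_1)}\|f\|=2^{kd/u_2}\|f\|$, not $2^{ks_1}\|f\|$.

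Second, the two ranges of your split are interchanged. Since $2^{k(s_2-s_1)}$ decreases in $k$, bounding the terms with $k\le K$ by $2^{k(s_2-s_1)}G(x)$ only yields $\lesssim G(x)$, because the term $k=0$ dominates. Meanwhile the bound $2^{kd/u_2}\|f\|$ summed over $k>K$ diverges. You must use the Nikol'skij bound for $k\le K$, giving $\lesssim 2^{Kd/u_2}\|f\|$, and the bound by $G$ for $k>K$, giving $\lesssim 2^{-K(s_1-s_2)}G(x)$. These are precisely the two quantities you then balance. Choosing $2^{Kd/u_1}\sim G(x)/\|f\|$, and simply bounding every term by $2^{k(s_2-s_1)}G(x)$ when $G(x)\le\|f\|$, yields the pointwise estimate for every $0<\varepsilon\le\infty$. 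So the reduction to small $\varepsilon$ is unnecessary.

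Third, on the obstacle you anticipate: a mean over a single ball of radius $2^{-k}$ does not control $|f_k(x)|$ uniformly. Use instead the Peetre-type estimate $|g(x)|^{p}\lesssim\int_{\mathbb{R}^d}|g(y)|^{p}(1+|x-y|)^{-N}\,dy$ for $\supp\mathcal{F}g\subset B(0,2)$. Split the integral over the dyadic balls $B(x,2^{j})$, $j\ge 0$, and use $\int_{B(x,2^j)}|g|^p\le|B(x,2^j)|^{1-p/u}\|g\,|\,\mathcal{M}^u_p\|^p$ with $N>d(1-p/u)$. After rescaling this gives $\|f_k\|_{\infty}\lesssim 2^{kd/u_1}\|f_k\,|\,\mathcal{M}^{u_1}_{p_1}\|$ uniformly in $k$.
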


\begin{proof}
This result is given in \cite[Theorem 2.1]{HaSk2014}.
\end{proof}

Below, we also apply some embeddings of Franke-Jawerth type. Again, we rely on the results provided by Haroske and Skrzypczak in \cite{HaSk2014}. We recall them according to their application below.  

\begin{lem}\label{lem_FJ_1}
Let $s_{1}, s_{2} \in \mathbb{R}$, $0 < p_{1} \leq u_{1} < \infty$, $0 < p_{2} \leq u_{2} < \infty$, $ 0 < q_{1} \leq \infty$ and $ 0 < q_{2} \leq \infty$. Let
\begin{align*}
p_{1} < u_{1} \quad \mbox{and} \quad \frac{p_{2}}{u_{2}} \leq \frac{p_{1}}{u_{1}} \quad \mbox{and} \quad   s_{1} - \frac{d}{u_{1}} = s_{2} - \frac{d}{u_{2}}  \quad \mbox{and} \quad s_{1} > s_{2} \quad \mbox{and} \quad   q_{2} = \infty .
\end{align*}
Then it holds $ \mathcal{E}^{s_{1}}_{u_{1},p_{1},q_{1}}(\mathbb{R}^d) \hookrightarrow   \mathcal{N}^{s_{2}}_{u_{2},p_{2},q_{2}}(\mathbb{R}^d)  $.
\end{lem}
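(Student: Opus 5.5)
Lemma \ref{lem_FJ_1} is a Franke--Jawerth type embedding taken from Haroske and Skrzypczak \cite{HaSk2014}, so the natural route is to reduce the stated hypotheses to their setting.

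\textbf{Step 1: reduction to $q_1=\infty$.} Since $\mathcal{E}^{s_1}_{u_1,p_1,q_1}(\R)\hookrightarrow \mathcal{E}^{s_1}_{u_1,p_1,\infty}(\R)$ trivially, it is enough to treat $q_1=\infty$.

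\textbf{Step 2: the core estimate.} One way to prove the embedding directly is as follows.
\begin{itemize}
\item[(a)] Use the elementary embedding $\mathcal{E}^{s_1}_{u_1,p_1,\infty}(\R)\hookrightarrow \mathcal{N}^{s_1}_{u_1,p_1,\infty}(\R)$. It holds because $\sup_k 2^{ks_1}\Vert f_k\vert\mathcal{M}^{u_1}_{p_1}\Vert \le \Vert \sup_k 2^{ks_1}|f_k| \,\vert \mathcal{M}^{u_1}_{p_1}\Vert$, where $f_k=\cfi[\varphi_k\cf f]$.
\item[(b)] Apply Lemma \ref{lem_FJ_3} with $q_1=q_2=\infty$. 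This requires $u_1\le u_2$. Note that $s_1>s_2$ together with $s_1-\frac d{u_1}=s_2-\frac d{u_2}$ forces $u_1<u_2$, so the hypothesis is met.
\end{itemize}
Combining (a) and (b) gives $\mathcal{E}^{s_1}_{u_1,p_1,q_1}(\R)\hookrightarrow\mathcal{N}^{s_2}_{u_2,p_2,\infty}(\R)$, which is the claim since $q_2=\infty$.

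\textbf{Step 3: checking the hypotheses and the obstacle.} It remains to verify that the hypothesis $\frac{p_2}{u_2}\le\frac{p_1}{u_1}$ required by Lemma \ref{lem_FJ_3} is exactly the one assumed here. It is, and no further conditions are needed.

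The condition $p_1<u_1$ is not used in this chain. Its presence in the statement suggests that \cite{HaSk2014} state the result in a sharper form, for instance with $q_2<\infty$ in some cases; for $q_2=\infty$ the argument above should suffice.

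The only delicate point I foresee is confirming that Lemma \ref{lem_FJ_3} really permits $q_1=q_2=\infty$ together with $p$-indices satisfying only $\frac{p_2}{u_2}\le \frac{p_1}{u_1}$. This follows from the cited \cite[Eq. (2.1)]{HaSk2014}. If it did not, I would instead prove the Besov--Morrey step directly:
\begin{itemize}
\item use Nikol'skij's inequality for Morrey spaces, $\Vert f_k\vert\mathcal{M}^{u_2}_{p_2}\Vert\lesssim 2^{kd(\frac1{u_1}-\frac1{u_2})}\Vert f_k\vert\mathcal{M}^{u_1}_{p_1}\Vert$, valid for band-limited $f_k$ under these index conditions;
\item multiply by $2^{ks_2}$ and take the supremum over $k$.
\end{itemize}

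Alternatively, the whole lemma can simply be cited from \cite[Theorem 3.1]{HaSk2014}, which the surrounding text indicates is the intended source.
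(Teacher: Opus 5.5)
Your proof is correct, but it does not follow the paper. The paper gives no argument and simply quotes the Franke--Jawerth-type theorem \cite[Theorem 3.1]{HaSk2014}. You observe that, because the target fine index is $q_2=\infty$, nothing of Franke--Jawerth type is needed.

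\begin{itemize}
\item The pointwise bound $2^{ks_1}\vert f_k\vert\le\sup_j 2^{js_1}\vert f_j\vert$ and monotonicity of the Morrey quasi-norm give $\mathcal{E}^{s_1}_{u_1,p_1,q_1}(\mathbb{R}^d)\hookrightarrow\mathcal{N}^{s_1}_{u_1,p_1,\infty}(\mathbb{R}^d)$ for every $q_1$. So your Step 1 is superfluous.
\item Lemma~\ref{lem_FJ_3} with $q_1=q_2=\infty$ then applies, since $s_1>s_2$ together with $s_1-\frac{d}{u_1}=s_2-\frac{d}{u_2}$ forces $u_1<u_2$.
\item Your Nikol'skij fallback is also valid. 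Put $\tilde p:=p_1u_2/u_1\in[p_2,u_2]$ and interpolate: $\Vert f_k\vert\mathcal{M}^{u_2}_{\tilde p}(\mathbb{R}^d)\Vert\le\Vert f_k\vert L_\infty(\mathbb{R}^d)\Vert^{1-\frac{u_1}{u_2}}\Vert f_k\vert\mathcal{M}^{u_1}_{p_1}(\mathbb{R}^d)\Vert^{\frac{u_1}{u_2}}$. Combined with $\Vert f_k\vert L_\infty(\mathbb{R}^d)\Vert\lesssim 2^{kd/u_1}\Vert f_k\vert\mathcal{M}^{u_1}_{p_1}(\mathbb{R}^d)\Vert$ and $\mathcal{M}^{u_2}_{\tilde p}(\mathbb{R}^d)\hookrightarrow\mathcal{M}^{u_2}_{p_2}(\mathbb{R}^d)$, this gives exactly the inequality you state.
\end{itemize}

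Your route shows that the lemma is an elementary corollary of the Sobolev embedding for Besov--Morrey spaces, and that $p_1<u_1$ is irrelevant for sufficiency. What the citation adds is sharpness.

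On that point your guess about the role of $p_1<u_1$ is backwards. It does not hint at a variant with $q_2<\infty$; it is exactly the regime in which $q_2=\infty$ cannot be lowered. For an example, let $\mathcal{F}\psi$ be supported in $\{\frac34<\vert\xi\vert<1\}$ and set $f=\sum_j 2^{j(\frac{d}{u_1}-s_1)}\psi(2^j\cdot)$. Then $f_j=2^{j(\frac{d}{u_1}-s_1)}\psi(2^j\cdot)$. The $\mathcal{E}^{s_1}_{u_1,p_1,q_1}$-square function of $f$ is dominated by $\vert x\vert^{-d/u_1}$, which lies in $\mathcal{M}^{u_1}_{p_1}(\mathbb{R}^d)$ when $p_1<u_1$. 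On the other hand, $2^{js_2}\Vert f_j\vert\mathcal{M}^{u_2}_{p_2}(\mathbb{R}^d)\Vert$ does not depend on $j$, so $f\notin\mathcal{N}^{s_2}_{u_2,p_2,q_2}(\mathbb{R}^d)$ for any $q_2<\infty$. By contrast, for $p_1=u_1$ the classical Jawerth--Franke embedding yields fine index $p_1$ in the target.
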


\begin{proof}
This result can be found in \cite[Theorem 3.1]{HaSk2014}.
\end{proof}

\begin{lem}\label{lem_FJ_2}
Let $s_{1}, s_{2} \in \mathbb{R}$, $0 < p_{1} \leq u_{1} < \infty$, $0 < p_{2} \leq u_{2} < \infty$ and $ 0 < q_{2} \leq \infty$. Let
\begin{align*}
p_{1} < u_{1} \quad \mbox{and} \quad \frac{d}{p_{2}} - s_{2} = \frac{d}{p_{1}} - s_{1}  \quad \mbox{and} \quad   s_{1} - \frac{d}{u_{1}} = s_{2} - \frac{d}{u_{2}}  \quad \mbox{and} \quad s_{1} > s_{2}.
\end{align*}
Then it holds $ \mathcal{N}^{s_{1}}_{u_{1},p_{1},p_{2}}(\mathbb{R}^d) \hookrightarrow   \mathcal{E}^{s_{2}}_{u_{2},p_{2},q_{2}}(\mathbb{R}^d)  $.
\end{lem}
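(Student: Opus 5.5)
The plan is to prove the embedding on the level of sequence spaces and then transfer it back. Both $\mathcal{N}^{s}_{u,p,q}(\R)$ and $\mathcal{E}^{s}_{u,p,q}(\R)$ admit wavelet (or atomic/molecular $\varphi$-transform) characterizations by sequence spaces $n^{s}_{u,p,q}$ and $e^{s}_{u,p,q}$. Here $e^{s}_{u,p,q}$ is normed by
\[
\sup_{P} |P|^{\frac1u-\frac1p}\Big(\int_P \Big(\sum_{j\ge j_P}\sum_{m} 2^{jsq}|\lambda_{j,m}\chi_{j,m}(x)|^q\Big)^{p/q}dx\Big)^{1/p},
\]
with $P$ running over dyadic cubes and $\chi_{j,m}$ the indicator of $Q_{j,m}$. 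This characterization holds for all $s$ once the wavelets are smooth enough. So it suffices to prove $n^{s_1}_{u_1,p_1,p_2}\hookrightarrow e^{s_2}_{u_2,p_2,q_2}$.

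Since $e^{s_2}_{u_2,p_2,q}$ increases as $q$ increases, we may take $q_2$ as small as we like, say $q_2\le p_2$. It is also convenient to record the parameter relations. Put $\delta:=s_1-s_2>0$. Then the hypotheses give
\[
\delta=\frac d{p_1}-\frac d{p_2}=\frac d{u_1}-\frac d{u_2},
\]
so $p_1<p_2$ and $u_1<u_2$. We normalize $\|\lambda\,|\,n^{s_1}_{u_1,p_1,p_2}\|=1$. By the elementary Morrey estimate on single levels this gives
\[
|\lambda_{j,m}|\le 2^{-j(s_1-d/u_1)}\,a_j, \qquad \Big(\sum_j a_j^{p_2}\Big)^{1/p_2}\le 1 .
\]

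The main step is a Jawerth-type argument localized to a fixed dyadic cube $P$ of level $\nu$. On $P$ only the levels $j\ge\nu$ contribute. For each $j\ge \nu$ the Besov-Morrey norm restricted to $P$ gives
\[
\Big(\sum_{Q_{j,m}\subset P}|\lambda_{j,m}|^{p_1}2^{-jd}\Big)^{1/p_1}\le 2^{-js_1}a_j|P|^{\frac1{p_1}-\frac1{u_1}} .
\]
We then imitate Vybíral's proof of the classical Franke-Jawerth embedding. For a threshold $t>0$ we estimate the measure of the set in $P$ where $\sup_j 2^{js_2}|\lambda_{j,m}\chi_{j,m}|>t$, respectively where the $\ell_{q_2}$-sum exceeds $t$. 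The levels $j$ are split at the critical level $J(t)$. For $j<J(t)$ we use the pointwise bound $|\lambda_{j,m}|\le 2^{-j(s_1-d/u_1)}$, which is where the Morrey exponent $u_1$ enters. For $j\ge J(t)$ we use Chebyshev with the $p_1$-sums above. The identity $\frac d{p_2}-s_2=\frac d{p_1}-s_1$ makes the geometric series in $j$ balance exactly, so the $\ell_{p_2}$-summability of $(a_j)$ is what produces an $L_{p_2}(P)$ bound rather than a weak-type one.

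The outcome should be
\[
\Big\|\Big(\sum_{j\ge\nu}2^{js_2q_2}|\lambda_j|^{q_2}\Big)^{1/q_2}\Big|L_{p_2}(P)\Big\|\lesssim |P|^{\frac1{p_2}-\frac1{u_2}} .
\]
Here the factor $|P|^{\frac1{p_1}-\frac1{u_1}}$ is converted into $|P|^{\frac1{p_2}-\frac1{u_2}}$ by the identity $\frac1{p_1}-\frac1{u_1}=\frac1{p_2}-\frac1{u_2}$, which follows from the two equalities above. Taking the supremum over $P$ gives the claim.

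I expect this localized distribution-function estimate to be the main obstacle. Inside a cube one has to interleave the two different decay mechanisms: the $u_1$-sup bound on coarse levels and the $p_1$-Chebyshev bound on fine levels. The assumption $p_1<u_1$ (with the strict inequality $s_1>s_2$) must be used to keep the coarse-level series geometric and convergent. If the direct argument does not close, I would fall back on citing the corresponding Franke-Jawerth theorem for Besov-Morrey into Triebel-Lizorkin-Morrey spaces in \cite{HaSk2014}, which is exactly this statement.
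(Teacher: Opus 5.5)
The paper does not prove this lemma itself; it simply quotes \cite[Theorem 3.2]{HaSk2014}, which is exactly your fallback. Your main route is a genuinely different, self-contained argument, and its reductions are sound. These are the wavelet transfer, the monotonicity in $q_2$, and the per-level bounds $|\lambda_{j,m}|\le 2^{-j(s_1-d/u_1)}a_j$ and $\Vert \lambda_j \vert L_{p_1}(P)\Vert \le 2^{-js_1}a_j|P|^{\kappa}$, where $\kappa:=\frac1{p_1}-\frac1{u_1}=\frac1{p_2}-\frac1{u_2}$. Restricting the $e$-norm to $j\ge j_P$ is also harmless: the coarse levels are controlled by the same pointwise bound, since $u_2<\infty$. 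What is missing is the only substantive step. The localized estimate is announced (``should be''), not proved, and your heuristic for it is off. When $p_1<u_1$ there is no exact balance, and the $\ell_{p_2}$-summability of $(a_j)$ plays no role.

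In fact the step is elementary and needs no distribution-function argument. Put $h_j:=2^{js_2}|\lambda_j|\chi_P$ for $j\ge\nu$. Your two bounds give $\Vert h_j\vert L_\infty\Vert\le 2^{jd/u_2}a_j$ and $\Vert h_j\vert L_{p_1}\Vert\le 2^{-j(s_1-s_2)}|P|^{\kappa}a_j$. Combined with $\Vert h\vert L_{p_2}\Vert\le\Vert h\vert L_\infty\Vert^{1-p_1/p_2}\Vert h\vert L_{p_1}\Vert^{p_1/p_2}$, this yields
\begin{align*}
\Vert h_j \vert L_{p_2}(P) \Vert \le 2^{-(j-\nu) d (1 - \frac{p_1}{p_2}) \kappa} \, |P|^{\kappa} \, a_j , \qquad j \ge \nu .
\end{align*}
Because $\kappa>0$, this decays geometrically in $j-\nu$. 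So for any $q_2$ you may bound $(\sum_{j\ge\nu}h_j^{q_2})^{1/q_2}\le C_\varepsilon\sup_{j\ge\nu}2^{\varepsilon(j-\nu)}h_j$ pointwise. Estimating the $p_2$-th power of this supremum by the corresponding sum then gives $\lesssim|P|^{\kappa}\sup_j a_j$ for $0<\varepsilon<d(1-\frac{p_1}{p_2})\kappa$. This is where $p_1<u_1$ genuinely enters. It also shows that your route proves more than the lemma, namely $\mathcal{N}^{s_1}_{u_1,p_1,\infty}(\R)\hookrightarrow\mathcal{E}^{s_2}_{u_2,p_2,q_2}(\R)$.

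The exact balance and the $\ell_{p_2}$-condition belong to the case $p_1=u_1$. If you want an argument that also covers that case, apply the classical sequence-space Franke embedding $b^{s_1}_{p_1,p_2}\hookrightarrow f^{s_2}_{p_2,q_2}$ to the truncated sequence $(\lambda_{j,m})_{j\ge\nu,\,Q_{j,m}\subset P}$. Its $b^{s_1}_{p_1,p_2}$-quasinorm is at most $|P|^{\kappa}\Vert(a_j)\vert\ell_{p_2}\Vert$ by your per-level bound. So the citation buys brevity, while your route, once this step is written out, is self-contained and slightly sharper.
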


\begin{proof}
This result is proved in \cite[Theorem 3.2]{HaSk2014}.
\end{proof}

Let us mention that forerunners of Lemma \ref{lem_FJ_1} and Lemma \ref{lem_FJ_2} for $ u_{1} = p_{1}   $ and  $ u_{2} = p_{2}   $ can be found in \cite[Theorem 3.2.1]{SiTri}.

\section{Pointwise Multiplication and Paramultiplication}\label{sec_paramultiplication}

One main tool for the proof of Theorem \ref{thm_main_hölder1} is the technique of paramultiplication. It has been developed independently by Peetre \cite{Pee76} and Triebel \cite{Tr77} around 1976. Later on, it has been used successfully to prove various results concerning pointwise multipliers. Below, we briefly recall the main idea of paramultiplication as given in \cite[Section 2.2]{SiTri}. For $ f \in \mathcal{S}'(\R) $ and $ j \in \mathbb{N}_{0}   $ we define
\begin{equation}\label{eq_f_lowerindex}
f_{j}(x) := \mathcal{F}^{-1}[\varphi_{j} \mathcal{F}f](x)
\end{equation}
and
\begin{equation}\label{eq_def_fjoben}
f^{j}(x) := \mathcal{F}^{-1}[\varphi_{0}(2^{-j} \cdot ) \mathcal{F}f](x) ,
\end{equation}
where $ (\varphi_{j})_{j \in \N_0 }$ is a smooth dyadic decomposition of unity. As observed in \cite[Equation (2.2.1)]{SiTri} it holds
\begin{align*}
\lim_{j \rightarrow \infty} f^{j} = \lim_{j \rightarrow \infty} \mathcal{F}^{-1}[\varphi_{0}(2^{-j} \cdot ) \mathcal{F}f] = f
\end{align*}
for any $ f \in \mathcal{S}'(\R) $ with convergence in $  \mathcal{S}'(\R) $. Clearly, $ f^{j}  $ is an entire analytic function. Consequently, for all $ f, g \in \mathcal{S}'(\R) $ and all $j \in \mathbb{N}_{0}$ the product  $f^{j} \cdot g^{j}$ makes sense. As in \cite[Equation (2.2.2)]{SiTri} we define
\begin{equation}
f \cdot g := \lim_{j \rightarrow \infty} f^{j} \cdot g^{j}
\end{equation}
whenever this limit exists. Notice, that for $M \in \mathbb{N}_{0}$ we have
\begin{equation}
\sum_{j = 0}^{M} \varphi_{j}(x) = \varphi_{0}(2^{-M}x),
\end{equation}
see for example \cite[Equation (2.1.3)]{SiTri}. Consequently, we can obtain the following decomposition:
\begin{align*}
f \cdot g & = \lim_{j \rightarrow \infty} f^{j} \cdot g^{j} \\
& = \lim_{j \rightarrow \infty} \mathcal{F}^{-1}\Big [ \sum_{\ell = 0}^{j} \varphi_{\ell} \mathcal{F}f \Big ] \cdot \mathcal{F}^{-1} \Big [\sum_{k = 0}^{j} \varphi_{k}  \mathcal{F}g \Big ] \\
& = \lim_{j \rightarrow \infty} \Big ( \sum_{\ell = 0}^{j} f_{\ell} \Big ) \cdot \Big (  \sum_{k = 0}^{j} g_{k} \Big ) .
\end{align*}
As described in \cite[Section 2.2]{SiTri} this can be transformed into
\begin{equation}\label{splitting_1}
f \cdot g = \sum_{\ell = 2}^{\infty}  f^{\ell - 2}   g_{\ell} + \sum_{k = 2}^{\infty} f_{k}  g^{k-2}    + \sum_{k = 0}^{\infty} \sum_{\ell = k-1}^{k+1} f_{k} g_{\ell} .
\end{equation}
Here we put $g_{-1} := 0$. This splitting motivates the following abbreviations:
\begin{equation}\label{splitting_1_sum1}
S^{1}(f,g) := \sum_{\ell = 2}^{\infty}  f^{\ell - 2}   g_{\ell} ,
\end{equation}
\begin{equation}\label{splitting_1_sum2}
S^{2}(f,g) := \sum_{k = 2}^{\infty} f_{k}  g^{k-2}   , 
\end{equation}
\begin{equation}\label{splitting_1_sum3}
S^{3}(f,g) := \sum_{k = 0}^{\infty} \sum_{\ell = k-1}^{k+1} f_{k} g_{\ell} .   
\end{equation}
Consequently, \eqref{splitting_1} can be rewritten as
\begin{equation}\label{splitting_1_new}
f \cdot g = S^{1}(f,g) + S^{2}(f,g) + S^{3}(f,g)  .
\end{equation}
For the decomposition \eqref{splitting_1_new} we observe the following useful properties of the supports of the involved functions, see \cite[Equations (2.2.3) and (2.2.4)]{SiTri}:
\begin{equation}\label{splitting_1_supp1}
\supp \mathcal{F} (  f^{\ell - 2}   g_{\ell} ) \subset \{ \xi \in \mathbb{R}^d : 3 \cdot 2^{\ell - 3} \leq \vert \xi \vert \leq 11 \cdot 2^{\ell - 3}   \} ,
\end{equation}
\begin{equation}\label{splitting_1_supp2}
\supp \mathcal{F} \Big ( \sum_{\ell = k-1}^{k+1} f_{k} g_{\ell} \Big ) \subset \{ \xi \in \mathbb{R}^d :  \vert \xi \vert \leq 9 \cdot 2^{k - 1}   \} .
\end{equation}
Now we prove some Fourier multiplier estimates for $ S^{1}(f,g)   $  and $      S^{2}(f,g)$. 

\begin{prop}\label{prop_splitting_multipliers_simple}
Let $0 < p \leq u < \infty$, $0 < p_{1} \leq u_{1} < \infty$ and $0 < p_{2} \leq u_{2} < \infty$, such that $ \frac{1}{p} = \frac{1}{p_{1}} + \frac{1}{p_{2}}  $ and $ \frac{1}{u} = \frac{1}{u_{1}} + \frac{1}{u_{2}}  $. Let $j \in \mathbb{N}_{0}$. 
\begin{itemize}
\item[(i)] Then for $ S^{1}(f,g)   $ it holds
\begin{align*}
\Vert   \mathcal{F}^{-1} [\varphi_{j} \mathcal{F}S^{1}(f,g)  ]  \vert \mathcal{M}^{u}_{p}(\R)  \Vert  \lesssim \Big \Vert  \sup_{k \in \mathbb{N}_{0}} \vert  f^{k} \vert  \Big  \vert \mathcal{M}^{u_{1}}_{p_{1}}(\R)   \Big \Vert    \max_{\ell \in \{ -1, 0, 1, 2 \} }    \Vert  g_{j + \ell}   \vert \mathcal{M}^{u_{2}}_{p_{2}}(\R)  \Vert .
\end{align*}

\item[(ii)]  For  $ S^{2}(f,g)   $ it holds
\begin{align*}
\Vert   \mathcal{F}^{-1} [\varphi_{j} \mathcal{F}S^{2}(f,g)  ]  \vert \mathcal{M}^{u}_{p}(\R)  \Vert  \lesssim \Big \Vert  \sup_{k \in \mathbb{N}_{0}} \vert  g^{k} \vert  \Big  \vert \mathcal{M}^{u_{2}}_{p_{2}}(\R)   \Big \Vert   \max_{\ell \in \{ -1, 0, 1, 2 \} }   \Vert  f_{j + \ell}   \vert \mathcal{M}^{u_{1}}_{p_{1}}(\R)  \Vert .
\end{align*}

\end{itemize}

\end{prop}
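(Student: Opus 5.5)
The plan is to localize $S^{1}(f,g)$ in frequency first, and only then use a Hölder inequality for Morrey spaces plus a Fourier multiplier bound. By \eqref{splitting_1_supp1}, $\mathcal{F}(f^{\ell-2}g_{\ell})$ is supported in the annulus $3\cdot 2^{\ell-3}\le|\xi|\le 11\cdot 2^{\ell-3}$. Meanwhile, by \eqref{eq_supp_varphik}, $\varphi_j$ lives on $2^{j-1}\le|\xi|\le 2^{j+1}$ (and on $|\xi|\le 3/2$ for $j=0$). Hence only the indices $\ell$ with $|\ell-j|$ bounded contribute to $\mathcal{F}^{-1}[\varphi_j\mathcal{F}S^1(f,g)]$. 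Checking the inequalities $3\cdot 2^{\ell-3}\le 2^{j+1}$ and $11\cdot 2^{\ell-3}\ge 2^{j-1}$ gives $\ell\in\{j-1,j,j+1,j+2\}$, possibly after the trivial adjustments for small $j$, where $g_\ell=0$ for $\ell<2$ in the sum. This yields
\[
\mathcal{F}^{-1}[\varphi_j\mathcal{F}S^1(f,g)]=\sum_{\ell=-1}^{2}\mathcal{F}^{-1}\big[\varphi_j\mathcal{F}(f^{j+\ell-2}g_{j+\ell})\big].
\]
By the quasi-triangle inequality in $\mathcal{M}^u_p$, it therefore suffices to bound each of these four terms separately.

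For a single term, $h:=f^{j+\ell-2}g_{j+\ell}$ has Fourier support in a ball of radius $c\,2^{j}$. I would prove a Fourier multiplier estimate in Morrey spaces for band-limited functions: if $\supp\mathcal{F}h\subset B(0,c2^j)$, then
\[
\Vert\mathcal{F}^{-1}[\varphi_j\mathcal{F}h]\vert\mathcal{M}^u_p\Vert\lesssim\Vert h\vert\mathcal{M}^u_p\Vert,
\]
with a constant independent of $j$. The case $p\ge1$ is easy. There $\mathcal{F}^{-1}[\varphi_j\mathcal{F}h]=\check\varphi_j*h$, with $\check\varphi_j=2^{jd}\check\varphi_1(2^{j}\cdot)$ (or the $\varphi_0$-analogue) uniformly in $L_1$. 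Minkowski's inequality then shows that $\mathcal{M}^u_p$ is invariant under translations and is a Banach lattice, so convolution with an $L_1$ kernel is bounded.

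For $p<1$ this is the main obstacle. I would use the Peetre maximal function $h^*_a(x)=\sup_y |h(x-y)|/(1+2^j|y|)^{a}$ with $a>d/p$, chosen large enough. Then $|\check\varphi_j*h(x)|\le h^*_a(x)\int 2^{jd}|\check\varphi_1(2^jy)|(1+2^j|y|)^a\,dy\lesssim h^*_a(x)$. Next, $h^*_a\lesssim (M|h|^{t})^{1/t}$ with $t<p$ and $a>d/t$, by the standard Peetre inequality for band-limited functions. Finally, the Hardy–Littlewood maximal operator is bounded on $\mathcal{M}^{u/t}_{p/t}$ since $p/t>1$. This gives the multiplier bound, and it is an established tool in Morrey spaces (Tang–Xu, Sawano) that I would cite or reprove briefly.

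It remains to apply Hölder's inequality in Morrey spaces. For a ball $B$, Hölder in $L_p(B)$ with $1/p=1/p_1+1/p_2$ gives
\[
\Big(\int_B|FG|^p\Big)^{1/p}\le\Big(\int_B|F|^{p_1}\Big)^{1/p_1}\Big(\int_B|G|^{p_2}\Big)^{1/p_2}.
\]
Because $1/u-1/p=(1/u_1-1/p_1)+(1/u_2-1/p_2)$, multiplying by $|B|^{1/u-1/p}$ and taking the supremum gives $\Vert FG\vert\mathcal{M}^u_p\Vert\le\Vert F\vert\mathcal{M}^{u_1}_{p_1}\Vert\,\Vert G\vert\mathcal{M}^{u_2}_{p_2}\Vert$. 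Applying this with $F=f^{j+\ell-2}$ and $G=g_{j+\ell}$, together with the pointwise bound $|f^{j+\ell-2}|\le\sup_k|f^k|$ and the monotonicity of the Morrey quasi-norm, yields (i); the terms with $j+\ell<2$ simply vanish. Part (ii) is identical after interchanging the roles of $f$ and $g$ and of $(u_1,p_1)$ and $(u_2,p_2)$.
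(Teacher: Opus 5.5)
Your proposal follows the paper's proof: frequency localization to $\ell\in\{j-1,j,j+1,j+2\}$ via the support relations, a multiplier bound for band-limited functions in $\mathcal{M}^{u}_{p}$ that is uniform in $j$, and then Hölder's inequality on balls combined with the splitting of $|B|^{\frac{1}{u}-\frac{1}{p}}$ and the pointwise bound $|f^{j+\ell-2}|\le\sup_k|f^k|$, with (ii) following by symmetry. The only difference is that you prove the multiplier bound yourself, using $L_1$-convolution for $p\ge 1$ and, for $p<1$, the Peetre maximal function together with Hardy--Littlewood boundedness on $\mathcal{M}^{u/t}_{p/t}$. The paper instead cites the $H^{\sigma}_{2}$-type multiplier theorem of Sawano and Tanaka (their Theorem 2.4), which rests on the same ingredients.
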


\begin{proof}
\textit{Step 1.} At first we prove (i). Let $j \in \mathbb{N}_{0}$. We use \eqref{splitting_1_sum1} to find
\begin{align*}
\Vert   \mathcal{F}^{-1} [\varphi_{j} \mathcal{F}S^{1}(f,g)  ](\cdot)  \vert \mathcal{M}^{u}_{p}(\R)  \Vert  \leq   \Big  \Vert  \sum_{\ell = 2}^{\infty}  \vert  \mathcal{F}^{-1} [\varphi_{j} \mathcal{F}   (f^{\ell - 2}   g_{\ell})  ](\cdot)  \vert  \Big \vert \mathcal{M}^{u}_{p}(\R) \Big \Vert   .
\end{align*}
Recall, that for $k \in \mathbb{N}$ by \eqref{eq_supp_varphik} we have $   \supp \varphi_k \subset \{x\in \R : 2^{k-1}\le \vert x \vert \le 2^{k+1} \}   $. Moreover, \eqref{splitting_1_supp1} yields $ \supp \mathcal{F} (  f^{\ell - 2}   g_{\ell} ) \subset \{ \xi \in \mathbb{R}^d : 3 \cdot 2^{\ell - 3} \leq \vert \xi \vert \leq 11 \cdot 2^{\ell - 3}   \}   $. Hence, 
\begin{align*}
\Vert   \mathcal{F}^{-1} [\varphi_{j} \mathcal{F}S^{1}(f,g)  ](\cdot)  \vert \mathcal{M}^{u}_{p}(\R)  \Vert  & \leq   \Big  \Vert  \sum_{\ell = \max \{2, j-1 \} }^{j+2} \vert  \mathcal{F}^{-1} [\varphi_{j} \mathcal{F}  ( f^{\ell - 2}   g_{\ell} ) ](\cdot) \vert   \Big \vert \mathcal{M}^{u}_{p}(\R) \Big \Vert  \\
& =   \Big  \Vert  \sum_{\ell = \max \{2-j, -1 \} }^{2} \vert  \mathcal{F}^{-1} [\varphi_{j} \mathcal{F} (  f^{\ell + j - 2}   g_{\ell + j} ) ](\cdot)    \vert  \Big \vert \mathcal{M}^{u}_{p}(\R) \Big \Vert  \\
& \lesssim \sum_{\ell = \max \{2-j, -1 \} }^{2}   \Big  \Vert    \mathcal{F}^{-1} [\varphi_{j} \mathcal{F} (  f^{\ell + j - 2}   g_{\ell + j} ) ](\cdot)    \Big \vert \mathcal{M}^{u}_{p}(\R) \Big \Vert  \\
& \lesssim \max_{\ell \in \{ -1, 0, 1, 2 \} }   \Big  \Vert    \mathcal{F}^{-1} [\varphi_{j} \mathcal{F}   ( f^{\ell + j - 2}   g_{\ell + j} ) ](\cdot)    \Big \vert \mathcal{M}^{u}_{p}(\R) \Big \Vert . 
\end{align*}
For $ k < 0  $ we put $ f^k := 0   $. We use the Fourier multiplier assertion given in \cite[Theorem 2.4]{SawTan}. For that purpose let $\sigma > \frac{d}{\min(1,p)} + \frac{d}{2}$. By \eqref{splitting_1_supp1} we find that $ \supp \mathcal{F} (  f^{\ell + j - 2}   g_{\ell + j} ) \subset \{ \xi \in \mathbb{R}^d : 3 \cdot 2^{\ell + j - 3} \leq \vert \xi \vert \leq 11 \cdot 2^{\ell + j - 3}   \}$. Therefore, we obtain
\begin{align*}
& \Vert   \mathcal{F}^{-1} [\varphi_{j} \mathcal{F}S^{1}(f,g)  ](\cdot)  \vert \mathcal{M}^{u}_{p}(\R)  \Vert  \\
& \qquad   \lesssim \max_{\ell \in \{ -1, 0, 1, 2 \} }    \Vert \varphi_{j}(   11 \cdot 2^{\ell + j - 3} x )  \vert H^{\sigma}_{2}(\mathbb{R}^d)   \Vert    \Vert     f^{\ell + j - 2}   g_{\ell + j}      \vert \mathcal{M}^{u}_{p}(\R)  \Vert . 
\end{align*}
Here $  H^{\sigma}_{2}(\mathbb{R}^d)   $ is a Bessel-Potential space with the norm $  \Vert \cdot  \vert H^{\sigma}_{2}(\mathbb{R}^d)   \Vert      $.  For $j \in \mathbb{N}$ it holds  $\varphi_j( 11 \cdot 2^{\ell + j - 3} x ) = \varphi_0(  11 \cdot 2^{\ell  - 3}  x) - \varphi_0(  11 \cdot 2^{\ell  - 2} x)$. Since $\varphi_0 \in C_0^{\infty}({\R})$, we find 
\begin{align*}
\Vert   \mathcal{F}^{-1} [\varphi_{j} \mathcal{F}S^{1}(f,g)  ](\cdot)  \vert \mathcal{M}^{u}_{p}(\R)  \Vert    \lesssim \max_{\ell \in \{ -1, 0, 1, 2 \} }       \Vert     f^{\ell + j - 2}   g_{\ell + j}      \vert \mathcal{M}^{u}_{p}(\R)  \Vert . 
\end{align*}
To continue we apply the Hölder inequality with $ \frac{1}{p} = \frac{1}{p_{1}} + \frac{1}{p_{2}}  $, which implies $ 1 = \frac{p}{p_{1}} + \frac{p}{p_{2}}  $. Then we find
\begin{align*}
& \Vert   \mathcal{F}^{-1} [\varphi_{j} \mathcal{F}S^{1}(f,g)  ](\cdot)  \vert \mathcal{M}^{u}_{p}(\R)  \Vert  \\
&   \lesssim \max_{\ell \in \{ -1, 0, 1, 2 \} }  \sup_{y \in \R, r > 0} \abs{B(y,r)}^{\frac{1}{u}-\frac{1}{p}} \Big ( \int_{B(y,r)}  \sup_{k \in \mathbb{N}_{0}} \vert  f^{k}(x) \vert^{p}    \vert       g_{\ell + j}(x)  \vert^{p}  dx \Big )^{\frac{1}{p}}  \\
&  \lesssim  \max_{\ell \in \{ -1, 0, 1, 2 \} }  \sup_{y \in \R, r > 0} \abs{B(y,r)}^{\frac{1}{u}-\frac{1}{p}}   \Big ( \int_{B(y,r)}  \sup_{k \in \mathbb{N}_{0}} \vert  f^{k}(x) \vert^{p_{1}} dx \Big )^{\frac{1}{p_{1}}} \Big (  \int_{B(y,r)}    \vert    g_{\ell + j}(x)  \vert^{p_{2}}  dx \Big )^{\frac{1}{p_{2}}}     .
\end{align*}
Recall $ \frac{1}{u} = \frac{1}{u_{1}} + \frac{1}{u_{2}}  $. Therefore
\begin{equation}\label{eq_u_ballsplit_u1u2}
\abs{B(y,r)}^{\frac{1}{u}-\frac{1}{p}}  =  \abs{B(y,r)}^{\frac{1}{u_{1}} + \frac{1}{u_{2}}-  \frac{1}{p_{1}} - \frac{1}{p_{2}}  } = \abs{B(y,r)}^{\frac{1}{u_{1}}-\frac{1}{p_{1}}}  \abs{B(y,r)}^{\frac{1}{u_{2}}-\frac{1}{p_{2}}}  .  
\end{equation}
Consequently,
\begin{align*}
 \Vert   \mathcal{F}^{-1} [\varphi_{j} \mathcal{F}S^{1}(f,g)  ]  \vert \mathcal{M}^{u}_{p}(\R)  \Vert   \lesssim  \Big \Vert  \sup_{k \in \mathbb{N}_{0}} \vert  f^{k} \vert   \Big \vert \mathcal{M}^{u_{1}}_{p_{1}}(\R) \Big \Vert   \max_{\ell \in \{ -1, 0, 1, 2 \} }   \Big \Vert          g_{\ell + j}      \Big \vert \mathcal{M}^{u_{2}}_{p_{2}}(\R) \Big \Vert    .
\end{align*}
This completes Step 1.

\textit{Step 2.} Now we prove (ii). For that purpose we use \eqref{splitting_1_sum2} to find  
\begin{equation}\label{eq_S2=S1}
S^{2}(f,g) = \sum_{k = 2}^{\infty} f_{k}  g^{k-2}   =  \sum_{k = 2}^{\infty}    g^{k-2} f_{k} = S^{1}(g,f) . 
\end{equation}
Hence, we can proceed as in Step 1 to obtain the desired result. 
\end{proof}

To continue we prove an advanced version of Proposition \ref{prop_splitting_multipliers_simple} for function series and deal with $   S^{3}(f,g) $. The following result can be seen as a generalization of \cite[Proposition 2.2.1]{SiTri} to Morrey spaces.

\begin{prop}\label{prop_splitting_multipliers}
Let $0 < p \leq u < \infty$, $0 < p_{1} \leq u_{1} < \infty$ and $0 < p_{2} \leq u_{2} < \infty$, such that $ \frac{1}{p} = \frac{1}{p_{1}} + \frac{1}{p_{2}}  $ and $ \frac{1}{u} = \frac{1}{u_{1}} + \frac{1}{u_{2}}  $. Let $ 0 < q \leq \infty   $.
\begin{itemize}
\item[(i)]  It holds
\begin{align*}
& \Big \Vert \Big ( \sum_{k = 0}^{\infty} \Big  \vert \mathcal{F}^{-1}\Big [\varphi_{k} \mathcal{F}S^{1}(f,g) \Big ](\cdot) \Big \vert^{q} \Big )^{\frac{1}{q}} \Big \vert \mathcal{M}^{u}_{p}(\R) \Big \Vert \\
& \qquad \qquad \lesssim \Big \Vert  \sup_{k \in \mathbb{N}_{0}} \vert  f^{k}( \cdot ) \vert   \Big \vert \mathcal{M}^{u_{2}}_{p_{2}}(\R) \Big \Vert \Big \Vert \Big ( \sum_{k = 0}^{\infty}   \vert g_{k}(\cdot) \vert^{q} \Big )^{\frac{1}{q}} \Big \vert \mathcal{M}^{u_{1}}_{p_{1}}(\R) \Big \Vert .
\end{align*}
In the case $q = \infty$ the usual modifications are made.
\item[(ii)]  Assume $p > 1$ and $k \in \mathbb{N}_{0}$. Then it holds
\begin{align*}
 \Big \Vert    \mathcal{F}^{-1}\Big [\varphi_{k} \mathcal{F}S^{3}(f,g) \Big ]  \Big \vert \mathcal{M}^{u}_{p}(\R) \Big \Vert  \lesssim \max_{-1 \leq j \leq 1} \sum_{\ell = -2}^{\infty} \Vert f_{k+ \ell} \vert \mathcal{M}^{u_{1}}_{p_{1}} (\mathbb{R}^d) \Vert  \Vert g_{k+ \ell + j} \vert \mathcal{M}^{u_{2}}_{p_{2}} (\mathbb{R}^d) \Vert .
\end{align*}
We put $f_{r} = g_{r} = 0$ if $r < 0$.
\item[(iii)]  Assume $p \leq 1$ and $k \in \mathbb{N}_{0}$. Then it holds
\begin{align*}
& \Big \Vert    \mathcal{F}^{-1}\Big [\varphi_{k} \mathcal{F}S^{3}(f,g) \Big ](\cdot)  \Big \vert \mathcal{M}^{u}_{p}(\R) \Big \Vert \\
& \qquad \qquad \lesssim \max_{-1 \leq j \leq 1} \Big ( \sum_{\ell = -2}^{\infty} 2^{\ell d(1-p)} \Vert f_{k+ \ell} \vert \mathcal{M}^{u_{1}}_{p_{1}} (\mathbb{R}^d) \Vert^{p}  \Vert g_{k+ \ell + j} \vert \mathcal{M}^{u_{2}}_{p_{2}} (\mathbb{R}^d) \Vert^{p}  \Big )^{\frac{1}{p}} .
\end{align*}
Again we put $f_{r} = g_{r} = 0$ if $r < 0$.

\item[(iv)] Let $s > \sigma_{p}$. Then it holds
\begin{align*}
& \Big \Vert  \sup_{k \in \mathbb{N}_{0}}  2^{ks} \Big \vert \mathcal{F}^{-1}\Big [\varphi_{k} \mathcal{F}S^{3}(f,g) \Big ](\cdot) \Big \vert   \Big \vert \mathcal{M}^{u}_{p}(\R) \Big \Vert \\
& \qquad \qquad \lesssim \max_{-1 \leq j \leq 1}  \Big \Vert  \sup_{k \in \mathbb{N}_{0}} 2^{\frac{ks}{2}} \vert f_{k} \vert  \Big \vert \mathcal{M}^{u_{1}}_{p_{1}} (\mathbb{R}^d) \Big \Vert \Big  \Vert   \sup_{k \in \mathbb{N}_{0}} 2^{\frac{ks}{2}} \vert g_{k + j} \vert  \Big \vert \mathcal{M}^{u_{2}}_{p_{2}} (\mathbb{R}^d) \Big  \Vert .
\end{align*}

\end{itemize}
\end{prop}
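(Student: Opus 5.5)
Part (i) is a vector-valued Morrey estimate for $S^{1}$. Write $\varphi_k$-pieces of $S^{1}$ as a finite sum over $\ell\in\{-1,0,1,2\}$ of $\mathcal{F}^{-1}[\varphi_k\mathcal{F}(f^{\ell+k-2}g_{\ell+k})]$, using \eqref{splitting_1_supp1}. Each such piece has Fourier support in a ball of radius $\sim 2^{k}$. I would then invoke a vector-valued Fourier multiplier theorem on $\mathcal{M}^{u}_{p}(\ell_q)$ for band-limited sequences, i.e.\ the Morrey analogue of the Triebel multiplier theorem; this is available from the Peetre maximal function bound in Morrey spaces, see \cite{TangXu} and \cite{SawTan}. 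It gives
\begin{align*}
\Big\Vert \Big(\sum_k |\mathcal{F}^{-1}[\varphi_k\mathcal{F}(f^{\ell+k-2}g_{\ell+k})]|^q\Big)^{1/q}\Big\vert \mathcal{M}^u_p\Big\Vert \lesssim \Big\Vert \Big(\sum_k |f^{\ell+k-2}g_{\ell+k}|^q\Big)^{1/q}\Big\vert \mathcal{M}^u_p\Big\Vert ,
\end{align*}
using that $\varphi_k(2^{k}\cdot)$ is uniformly bounded in $H^\sigma_2$. Next I pull out $\sup_m|f^m|$ pointwise, apply Hölder's inequality on balls with $1/p=1/p_1+1/p_2$, and split $|B|^{1/u-1/p}$ as in \eqref{eq_u_ballsplit_u1u2}. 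Because $f$ and $g$ are symmetric, the roles of the indices $1,2$ in the statement are immaterial; relabeling gives exactly the stated form. The shift $\ell$ is harmless: it only reindexes the $g_k$.

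For (ii) and (iii), fix $k$. By \eqref{splitting_1_supp2}, $\varphi_k$ only sees the terms $f_m g_{m+j}$ with $m\ge k-2$, $|j|\le1$. Here the products are supported in balls of radius $\sim 2^{m}$, not in annuli near $2^k$. In case (ii), $p>1$, I would use that $\mathcal{F}^{-1}\varphi_k\ast$ is bounded on $\mathcal{M}^u_p$ uniformly in $k$ (an $L_1$ kernel), then apply the quasi-triangle inequality (a true triangle inequality for $p\ge1$), then Hölder on balls. Writing $m=k+\ell$ yields the sum. In case (iii), $p\le1$, I would use the $p$-triangle inequality $\Vert\sum h_m\Vert^p\le\sum\Vert h_m\Vert^p$ in $\mathcal{M}^u_p$. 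For each term I need a convolution estimate for functions $h$ with $\supp\mathcal{F}h\subset B(0,c2^{m})$:
\begin{align*}
\Vert \mathcal{F}^{-1}[\varphi_k\mathcal{F}h]\vert\mathcal{M}^u_p\Vert\lesssim 2^{(m-k)d(1/p-1)}\Vert h\vert\mathcal{M}^u_p\Vert .
\end{align*}
This is the Morrey version of the classical Nikol'skij-type multiplier lemma. Applying \cite[Theorem 2.4]{SawTan} with the dilated multiplier $\varphi_k(2^{m}\cdot)$, its $H^\sigma_2$ norm with $\sigma>d/p+d/2$ grows like $2^{(m-k)(\sigma-d/2)}$. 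That is not quite sharp. The sharp factor $2^{(m-k)d(1-p)/p}$ should instead come from the standard argument: bound $|\mathcal{F}^{-1}\varphi_k\ast h|\le \int|\check\varphi_k(x-y)||h(y)|dy$ and use the Plancherel–Polya–Nikol'skij inequality $\Vert h\vert L_1(B)\Vert\lesssim 2^{md(1/p-1)}\Vert h\vert L_p\Vert$ for band-limited $h$, localized to balls. This localization is the main obstacle, and I would carry it out via the Peetre maximal function estimate on Morrey spaces. Raising the bound to the $p$-th power gives the factor $2^{\ell d(1-p)}$.

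For (iv), I would apply (ii)/(iii) with $\mathcal{M}$-norms replaced by pointwise suprema; it is easier to redo the argument pointwise. Then $2^{ks}|f_{k+\ell}g_{k+\ell+j}|\le 2^{-\ell s}F G$, where $F=\sup_m2^{ms/2}|f_m|$ and $G=\sup_m2^{ms/2}|g_{m+j}|$. For $p>1$, summing the geometric series in $\ell$ over the convolution bound requires replacing convolution by the Hardy–Littlewood maximal function, which is bounded on $\mathcal{M}^{u}_{p}$, and then Hölder. For $p\le1$, I would work with $|\cdot|^{r}$, $r<p$, using the Peetre maximal inequality. The factor $2^{\ell d(1/r-1)-\ell s}$ is summable exactly when $s>\sigma_p$, choosing $r$ close to $p$. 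Finally, Hölder on balls and \eqref{eq_u_ballsplit_u1u2} finish the proof.
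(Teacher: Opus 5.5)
\textbf{Part (i).} This is the paper's argument: the vector-valued multiplier theorem of \cite{SawTan}, pointwise extraction of $\sup_m |f^m|$, H\"older on balls and the splitting \eqref{eq_u_ballsplit_u1u2}. The relabeling of indices is legitimate because the exponent relations are symmetric.

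\textbf{Part (iii): the genuine gap.} You isolate the same key estimate as the paper,
$\Vert \mathcal{F}^{-1}[\varphi_k \mathcal{F} h] \vert \mathcal{M}^u_p(\mathbb{R}^d)\Vert \lesssim 2^{(m-k)d(\frac1p-1)} \Vert h \vert \mathcal{M}^u_p(\mathbb{R}^d)\Vert$ for $\supp \mathcal{F}h \subset B(0,c2^m)$, $m \ge k-2$. However, you leave its proof as ``the main obstacle'', and the tool you name cannot deliver it.
The Morrey bound for Peetre's maximal function $h^*_{m,a}$ needs $a>d/p$. The estimate $|\check{\varphi}_k * h| \le h^*_{m,a} \int |\check{\varphi}_k(z)|(1+2^m|z|)^a\,dz \lesssim 2^{(m-k)a} h^*_{m,a}$ then gives the exponent $a>d/p$ instead of $d(\frac1p-1)$.
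Even the refinement $|h|=|h|^r|h|^{1-r}$ with $r<p$, which you need so that the maximal operator is bounded on $\mathcal{M}^{u/r}_{p/r}$, only yields $2^{(m-k)d(\frac1r-1)}$.
So the factor $2^{\ell d(1-p)}$ asserted in (iii) does not follow from your sketch. The crude version would not even suffice for Substep 1.3 of the main proof: there one only has $d(\frac1t-1)<2s$, whereas $2s>d/t$ would be incompatible with $\frac1{r}>0$.

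\textbf{How to close it.} Apply Nikol'skij's inequality before taking absolute values; then neither localization nor a maximal function is needed. Fix $x$. The function $y\mapsto \check{\varphi}_k(x-y)h(y)$ is a bounded function times a Schwartz function, hence in $L_p$. Its Fourier transform is supported in a ball of radius $\sim 2^m$, because $k\le m+2$. Hence
\[
|\check{\varphi}_k * h(x)| \le \Vert \check{\varphi}_k(x-\cdot)\,h \vert L_1(\mathbb{R}^d)\Vert \lesssim 2^{md(\frac1p-1)} \Vert \check{\varphi}_k(x-\cdot)\,h \vert L_p(\mathbb{R}^d)\Vert ,\quad\text{i.e.}\quad |\check{\varphi}_k * h|^p \lesssim 2^{md(1-p)}\, |\check{\varphi}_k|^p * |h|^p .
\]
Now use three facts: $\Vert |\check{\varphi}_k|^p \vert L_1(\mathbb{R}^d)\Vert \sim 2^{-kd(1-p)}$; $\Vert F\vert\mathcal{M}^u_p\Vert^p=\Vert |F|^p\vert\mathcal{M}^{u/p}_1\Vert$; and convolution with an $L_1$ kernel is bounded on $\mathcal{M}^{u/p}_1$ by Minkowski's inequality. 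Together they give exactly $\Vert \check{\varphi}_k * h \vert \mathcal{M}^u_p\Vert^p \lesssim 2^{(m-k)d(1-p)}\Vert h\vert \mathcal{M}^u_p\Vert^p$. This is the Morrey version of \cite[1.5.2]{Tr83} that the paper uses together with the dilation result of \cite{HoWeiDil}.

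The same computation in $L_r$, $r<p$, gives $|\check{\varphi}_k * h|\lesssim 2^{(m-k)d(\frac1r-1)}(M|h|^r)^{1/r}$. This is the pointwise bound your (iv) needs for $p\le 1$. There the loss from $r<p$ is harmless because $s>\sigma_p$ is strict. The crude Peetre bound, by contrast, would cost $2^{(m-k)d/r}$ and force $s>d/p$.

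\textbf{Parts (ii) and (iv): valid alternative routes.} With this repair, both are correct and differ from the paper.
In (ii) you replace the Mikhlin-type theorem of \cite{MaWietall} by $\sup_k\Vert \check{\varphi}_k\vert L_1\Vert<\infty$ and Minkowski's inequality, which is more elementary.
In (iv) the paper applies the dyadic ball criterion (Proposition \ref{prop:dyadic_crit} with $q=\infty$) as a black box and then H\"older. You effectively reprove that special case, using $2^{ks}|f_{k+\ell}g_{k+\ell+j}|\le 2^{-\ell s}FG$ and the boundedness of the Hardy--Littlewood maximal operator on $\mathcal{M}^u_p$ for $p>1$, resp.\ on $\mathcal{M}^{u/r}_{p/r}$ for $p\le 1$. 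Your route is self-contained and shows exactly where $s>\sigma_p$ enters; the paper's is shorter but rests on \cite[Proposition 6.2]{ysy}.
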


\begin{proof}
\textit{Step 1.} At first we prove (i). For that purpose we use \eqref{splitting_1_sum1} to find
\begin{align*}
& \Big \Vert \Big ( \sum_{k = 0}^{\infty} \Big  \vert \mathcal{F}^{-1}\Big [\varphi_{k} \mathcal{F}S^{1}(f,g) \Big ](\cdot) \Big \vert^{q} \Big )^{\frac{1}{q}} \Big \vert \mathcal{M}^{u}_{p}(\R) \Big \Vert \\
& \qquad \qquad =  \Big \Vert \Big ( \sum_{k = 0}^{\infty} \Big  \vert \mathcal{F}^{-1}\Big [\varphi_{k} \mathcal{F} \Big (  \sum_{\ell = 2}^{\infty}  f^{\ell - 2}   g_{\ell} \Big )  \Big ](\cdot) \Big \vert^{q} \Big )^{\frac{1}{q}} \Big \vert \mathcal{M}^{u}_{p}(\R) \Big \Vert   \\
& \qquad \qquad \leq \Big \Vert \Big (  \sum_{\ell = 2}^{\infty} \sum_{k = 0}^{\infty}   \vert \mathcal{F}^{-1} [\varphi_{k} \mathcal{F} (    f^{\ell - 2}   g_{\ell}  )   ](\cdot)  \vert^{q} \Big )^{\frac{1}{q}} \Big \vert \mathcal{M}^{u}_{p}(\R) \Big \Vert   .
\end{align*}
For $k \in \mathbb{N}$ by \eqref{eq_supp_varphik} it follows $   \supp \varphi_k \subset \{x\in \R : 2^{k-1}\le \vert x \vert \le 2^{k+1} \}   $. \eqref{splitting_1_supp1} implies $ \supp \mathcal{F} (  f^{\ell - 2}   g_{\ell} ) \subset \{ \xi \in \mathbb{R}^d : 3 \cdot 2^{\ell - 3} \leq \vert \xi \vert \leq 11 \cdot 2^{\ell - 3}   \}  \subset \{ \xi \in \mathbb{R}^d : 2 \cdot 2^{\ell - 3} \leq \vert \xi \vert \leq 2^4 \cdot 2^{\ell - 3}   \}   $. Consequently, the support properties of the involved functions imply
\begin{align*}
& \Big \Vert \Big ( \sum_{k = 0}^{\infty} \Big  \vert \mathcal{F}^{-1}\Big [\varphi_{k} \mathcal{F}S^{1}(f,g) \Big ](\cdot) \Big \vert^{q} \Big )^{\frac{1}{q}} \Big \vert \mathcal{M}^{u}_{p}(\R) \Big \Vert \\
& \qquad \qquad \leq \Big \Vert \Big (  \sum_{\ell = 2}^{\infty} \sum_{n = -2}^{1}   \vert \mathcal{F}^{-1} [\varphi_{\ell + n} \mathcal{F} (    f^{\ell - 2}   g_{\ell}  )   ](\cdot)  \vert^{q} \Big )^{\frac{1}{q}} \Big \vert \mathcal{M}^{u}_{p}(\R) \Big \Vert   \\
& \qquad \qquad \lesssim  \max_{n \in \{-2, -1, 0, 1\} } \Big \Vert \Big (  \sum_{\ell = 2}^{\infty}    \vert \mathcal{F}^{-1} [\varphi_{\ell + n} \mathcal{F} (    f^{\ell - 2}   g_{\ell}  )   ](\cdot)  \vert^{q} \Big )^{\frac{1}{q}} \Big \vert \mathcal{M}^{u}_{p}(\R) \Big \Vert   .
\end{align*}
We use a Fourier multiplier assertion given in \cite[Theorem 2.4]{SawTan}. For that purpose let $\sigma > \frac{d}{\min(1,p,q)} + \frac{d}{2}$. Since $ \supp \mathcal{F} (  f^{\ell - 2}   g_{\ell} ) \subset \{ \xi \in \mathbb{R}^d : 3 \cdot 2^{\ell - 3} \leq \vert \xi \vert \leq 11 \cdot 2^{\ell - 3}   \}$, we obtain
\begin{align*}
& \Big \Vert \Big ( \sum_{k = 0}^{\infty} \Big  \vert \mathcal{F}^{-1}\Big [\varphi_{k} \mathcal{F}S^{1}(f,g) \Big ](\cdot) \Big \vert^{q} \Big )^{\frac{1}{q}} \Big \vert \mathcal{M}^{u}_{p}(\R) \Big \Vert \\
& \qquad \qquad \lesssim  \max_{n \in \{-2, -1, 0, 1\} } \sup_{k \in \mathbb{N}_{0}}  \Vert  \varphi_{k + n}(11 \cdot 2^{k-3}  x ) \vert   H^{\sigma}_{2}(\mathbb{R}^d) \Vert \Big \Vert \Big (  \sum_{\ell = 2}^{\infty}    \vert     f^{\ell - 2}   g_{\ell}     \vert^{q} \Big )^{\frac{1}{q}} \Big \vert \mathcal{M}^{u}_{p}(\R) \Big \Vert   .
\end{align*}
Here for $r < 0$ we put $  \varphi_{r} = 0  $. Recall, that for $k + n \in \mathbb{N}$ it holds
\begin{align*}
 \varphi_{k+n}(11 \cdot 2^{k-3} \cdot x) & = \varphi_0( 11 \cdot 2^{k-3} \cdot  2^{-k-n}x) - \varphi_0( 11 \cdot 2^{k-3} \cdot  2^{-k-n+1}x) \\
& = \varphi_0( 11  \cdot  2^{-n-3}x) - \varphi_0( 11  \cdot  2^{-n-2}x) . 
\end{align*}
Due to $ n \in \{-2, -1, 0, 1\}    $ and $\varphi_0 \in C_0^{\infty}({\R})$ this implies
\begin{align*}
\Big \Vert \Big ( \sum_{k = 0}^{\infty} \Big  \vert \mathcal{F}^{-1}\Big [\varphi_{k} \mathcal{F}S^{1}(f,g) \Big ](\cdot) \Big \vert^{q} \Big )^{\frac{1}{q}} \Big \vert \mathcal{M}^{u}_{p}(\R) \Big \Vert  \lesssim  \Big \Vert \Big (  \sum_{\ell = 2}^{\infty}    \vert     f^{\ell - 2}   g_{\ell}     \vert^{q} \Big )^{\frac{1}{q}} \Big \vert \mathcal{M}^{u}_{p}(\R) \Big \Vert   .
\end{align*}
We apply the Hölder inequality with $ \frac{1}{p} = \frac{1}{p_{1}} + \frac{1}{p_{2}}  $, which implies $ 1 = \frac{p}{p_{1}} + \frac{p}{p_{2}}  $, to find
\begin{align*}
& \Big \Vert \Big ( \sum_{k = 0}^{\infty} \Big  \vert \mathcal{F}^{-1}\Big [\varphi_{k} \mathcal{F}S^{1}(f,g) \Big ](\cdot) \Big \vert^{q} \Big )^{\frac{1}{q}} \Big \vert \mathcal{M}^{u}_{p}(\R) \Big \Vert \\
& \quad  \lesssim  \sup_{y \in \R, r > 0} \abs{B(y,r)}^{\frac{1}{u}-\frac{1}{p}} \Big ( \int_{B(y,r)}  \sup_{k \in \mathbb{N}_{0}} \vert  f^{k}(x) \vert^{p}  \Big (  \sum_{\ell = 0}^{\infty}    \vert       g_{\ell}(x)     \vert^{q} \Big )^{\frac{p}{q}}  dx \Big )^{\frac{1}{p}}    \\
& \quad  \lesssim  \sup_{y \in \R, r > 0} \abs{B(y,r)}^{\frac{1}{u}-\frac{1}{p}}   \Big ( \int_{B(y,r)}  \sup_{k \in \mathbb{N}_{0}} \vert  f^{k}(x) \vert^{p_{2}} dx \Big )^{\frac{1}{p_{2}}} \Big (  \int_{B(y,r)}  \Big (  \sum_{\ell = 0}^{\infty}    \vert       g_{\ell}(x)     \vert^{q} \Big )^{\frac{p_{1}}{q}}  dx \Big )^{\frac{1}{p_{1}}}     .
\end{align*}
Recall $ \frac{1}{u} = \frac{1}{u_{1}} + \frac{1}{u_{2}}  $ and \eqref{eq_u_ballsplit_u1u2}. Consequently,
\begin{align*}
& \Big \Vert \Big ( \sum_{k = 0}^{\infty} \Big  \vert \mathcal{F}^{-1}\Big [\varphi_{k} \mathcal{F}S^{1}(f,g) \Big ](\cdot) \Big \vert^{q} \Big )^{\frac{1}{q}} \Big \vert \mathcal{M}^{u}_{p}(\R) \Big \Vert \\
& \qquad  \lesssim  \Big \Vert  \sup_{k \in \mathbb{N}_{0}} \vert  f^{k}( \cdot ) \vert   \Big \vert \mathcal{M}^{u_{2}}_{p_{2}}(\R) \Big \Vert \Big \Vert  \Big (  \sum_{\ell = 0}^{\infty}    \vert       g_{\ell}( \cdot )     \vert^{q} \Big )^{\frac{1}{q}}   \Big \vert \mathcal{M}^{u_{1}}_{p_{1}}(\R) \Big \Vert    .
\end{align*}
This completes Step 1.

\textit{Step 2.} Now we prove (ii). For that purpose we use the support properties of the involved functions, namely \eqref{eq_supp_varphik} and \eqref{splitting_1_supp2}. Using \eqref{splitting_1_sum3}, for all $k \in \mathbb{N}_{0}$ they imply
\begin{align*}
 \mathcal{F}^{-1}\Big [\varphi_{k} \mathcal{F}S^{3}(f,g) \Big ] & = \mathcal{F}^{-1}\Big [\varphi_{k} \mathcal{F} \Big ( \sum_{\ell = 0}^{\infty} \sum_{j = \ell-1}^{\ell+1} f_{\ell} g_{j} \Big )  \Big ] \\
& = \mathcal{F}^{-1}\Big [\varphi_{k} \mathcal{F} \Big ( \sum_{\ell = 0}^{\infty} \sum_{j = -1}^{1} f_{\ell} g_{\ell + j} \Big )  \Big ] \\ 
& =  \sum_{t = -2}^{\infty} \sum_{j = -1}^{1} \mathcal{F}^{-1} [\varphi_{k} \mathcal{F} (  f_{k + t} \cdot g_{k + t + j} )  ] , 
\end{align*}
see also \cite[Equation (5.5.15)]{SiTri}. To continue we apply a Fourier multiplier theorem of Mikhlin type for Morrey spaces, see \cite[Theorem 3]{MaWietall}. Let $\alpha \in \mathbb{N}_{0}^{d}$ be a multi-index with $\vert \alpha \vert \leq d + 2$. Then for $k \in \mathbb{N}$ we observe
\begin{align*}
\vert D^{\alpha} \varphi_{k}(x) \vert & = \vert D^{\alpha} \varphi_0(2^{-k}x) - D^{\alpha} \varphi_0(2^{-k+1}x) \vert \\
& \leq \vert 2^{-k \vert \alpha \vert} (D^{\alpha} \varphi_0)(2^{-k}x) \vert  + \vert  2^{(-k+1) \vert \alpha \vert }  (D^{\alpha} \varphi_0)(2^{-k+1}x) \vert \\
& \leq  C_{\alpha} \chi_{B(0, \frac{3}{2} 2^{k})}(x) 2^{-k \vert \alpha \vert}    +    C_{\alpha} \chi_{B(0, \frac{3}{2} 2^{k-1})}(x) 2^{(-k+1) \vert \alpha \vert }   \\
& \leq  C'_{\alpha} \vert x \vert^{- \vert \alpha \vert} .
\end{align*}
In the last step we used the compact support of the indicator function $ \chi_{B(0, \frac{3}{2} 2^{k})}( \cdot )  $. The constant $  C'_{\alpha}  $ is independent of $x$ and $k$. Consequently, the Fourier multiplier theorem can be applied. It requires $p > 1$ and yields
\begin{align*}
& \Big \Vert    \mathcal{F}^{-1}\Big [\varphi_{k} \mathcal{F}S^{3}(f,g) \Big ](\cdot)  \Big \vert \mathcal{M}^{u}_{p}(\R) \Big \Vert \\
& \qquad \qquad = \Big \Vert   \sum_{t = -2}^{\infty} \sum_{j = -1}^{1} \mathcal{F}^{-1} [\varphi_{k} \mathcal{F} (  f_{k + t} \cdot g_{k + t + j} )  ] (\cdot)  \Big \vert \mathcal{M}^{u}_{p}(\R) \Big \Vert \\
& \qquad \qquad \leq    \sum_{t = -2}^{\infty} \sum_{j = -1}^{1} \Vert \mathcal{F}^{-1} [\varphi_{k} \mathcal{F} (  f_{k + t} \cdot g_{k + t + j} )  ] (\cdot)   \vert \mathcal{M}^{u}_{p}(\R)  \Vert \\
& \qquad \qquad \lesssim    \sum_{t = -2}^{\infty} \sum_{j = -1}^{1} \Vert    f_{k + t} \cdot g_{k + t + j}      \vert \mathcal{M}^{u}_{p}(\R)  \Vert \\
& \qquad \qquad \lesssim \max_{j \in \{ -1, 0, 1  \}}    \sum_{t = -2}^{\infty}  \Vert    f_{k + t} \cdot g_{k + t + j}      \vert \mathcal{M}^{u}_{p}(\R)  \Vert .
\end{align*}
A combination of the Hölder inequality with $ 1 = \frac{p}{p_{1}} + \frac{p}{p_{2}}  $ and \eqref{eq_u_ballsplit_u1u2} yields 
\begin{align*}
& \Big \Vert    \mathcal{F}^{-1}\Big [\varphi_{k} \mathcal{F}S^{3}(f,g) \Big ](\cdot)  \Big \vert \mathcal{M}^{u}_{p}(\R) \Big \Vert \\
& \qquad \qquad \lesssim \max_{j \in \{ -1, 0, 1  \}}    \sum_{t = -2}^{\infty}  \Vert    f_{k + t}  \vert \mathcal{M}^{u_{1}}_{p_{1}}(\R)  \Vert  \Vert g_{k + t + j}      \vert \mathcal{M}^{u_{2}}_{p_{2}}(\R)  \Vert .
\end{align*}
Hence, the proof of (ii) is complete. 

\textit{Step 3.} To continue we prove (iii). Let $ 0 < p \leq 1$. As in Step 2 we find
\begin{align*}
& \Big \Vert    \mathcal{F}^{-1}\Big [\varphi_{k} \mathcal{F}S^{3}(f,g) \Big ](\cdot)  \Big \vert \mathcal{M}^{u}_{p}(\R) \Big \Vert^{p} \\
& \qquad \qquad = \Big \Vert   \sum_{t = -2}^{\infty} \sum_{j = -1}^{1} \mathcal{F}^{-1} [\varphi_{k} \mathcal{F} (  f_{k + t} \cdot g_{k + t + j} )  ] (\cdot)  \Big \vert \mathcal{M}^{u}_{p}(\R) \Big \Vert^{p} \\
& \qquad \qquad \leq    \sum_{t = -2}^{\infty} \sum_{j = -1}^{1} \Vert \mathcal{F}^{-1} [\varphi_{k} \mathcal{F} (  f_{k + t} \cdot g_{k + t + j} )  ] (\cdot)   \vert \mathcal{M}^{u}_{p}(\R)  \Vert^{p}  \\
& \qquad \qquad \lesssim \max_{j \in \{ -1, 0, 1 \} }    \sum_{t = -2}^{\infty}  \Vert \mathcal{F}^{-1} [\varphi_{k} \mathcal{F} (  f_{k + t} \cdot g_{k + t + j} )  ] (\cdot)   \vert \mathcal{M}^{u}_{p}(\R)  \Vert^{p}  .
\end{align*}
Using \cite[Chapter 1.5.2, Remark 3]{Tr83} and \cite[Theorem 2.4]{HoWeiDil} we obtain
\begin{align*}
& \Vert \mathcal{F}^{-1} [\varphi_{k} \mathcal{F} (  f_{k + t} \cdot g_{k + t + j} )  ] (\cdot)   \vert \mathcal{M}^{u}_{p}(\R)  \Vert \\
& \qquad =   \Vert ( \mathcal{F}^{-1} [\varphi_{k}(2^{k+t+2} \cdot ) (\mathcal{F} (  f_{k + t} \cdot g_{k + t + j} )(2^{-k-t-2} \cdot))(\cdot)  ]) (2^{k+t+2} \cdot)   \vert \mathcal{M}^{u}_{p}(\R)  \Vert \\
& \qquad \lesssim 2^{ - \frac{d}{u}(k+t+2)}   \Vert  \mathcal{F}^{-1} [\varphi_{k}(2^{k+t+2} \cdot ) (\mathcal{F} (  f_{k + t} \cdot g_{k + t + j} )(2^{-k-t-2} \cdot))(\cdot)  ] \vert \mathcal{M}^{u}_{p}(\R)  \Vert \\
& \qquad \lesssim (2^{-t-1} + 1)^{d(\frac{1}{p}-1)}   \Vert  \mathcal{F}^{-1} [\varphi_{k}(2^{k+t+2} \cdot )] \vert  L_{p}(\mathbb{R}^d) \Vert  \Vert     f_{k + t} \cdot g_{k + t + j}   \vert \mathcal{M}^{u}_{p}(\R)  \Vert .
\end{align*}
The equation $ \varphi_{k}(2^{k+t+2} x ) = \varphi_{0}(2^{t+2}x) - \varphi_{0}(2^{t+3}x)   $ yields
\begin{align*}
& \Vert \mathcal{F}^{-1} [\varphi_{k} \mathcal{F} (  f_{k + t} \cdot g_{k + t + j} )  ] (\cdot)   \vert \mathcal{M}^{u}_{p}(\R)  \Vert  \lesssim  2^{td(\frac{1}{p}-1)}  \Vert     f_{k + t} \cdot g_{k + t + j}   \vert \mathcal{M}^{u}_{p}(\R)  \Vert .
\end{align*}
Using this in combination with the Hölder inequality with $ 1 = \frac{p}{p_{1}} + \frac{p}{p_{2}}  $ and \eqref{eq_u_ballsplit_u1u2} we find
\begin{align*}
& \Big \Vert    \mathcal{F}^{-1}\Big [\varphi_{k} \mathcal{F}S^{3}(f,g) \Big ](\cdot)  \Big \vert \mathcal{M}^{u}_{p}(\R) \Big \Vert^{p} \\
& \qquad \qquad \lesssim \max_{j \in \{ -1, 0, 1 \} }    \sum_{t = -2}^{\infty} 2^{td(1-p)}  \Vert     f_{k + t} \cdot g_{k + t + j}   \vert \mathcal{M}^{u}_{p}(\R)  \Vert^{p}   \\
& \qquad \qquad \lesssim \max_{j \in \{ -1, 0, 1 \} }    \sum_{t = -2}^{\infty} 2^{td(1-p)}  \Vert    f_{k + t}  \vert \mathcal{M}^{u_{1}}_{p_{1}}(\R)  \Vert^{p}  \Vert g_{k + t + j}      \vert \mathcal{M}^{u_{2}}_{p_{2}}(\R)  \Vert^{p}    .
\end{align*}
Hence, this step of the proof is complete.

\textit{Step 4.} To continue we prove (iv).  Using \eqref{splitting_1_sum3} we find
\begin{align*}
& \Big \Vert  \sup_{k \in \mathbb{N}_{0}}  2^{ks} \Big \vert \mathcal{F}^{-1}\Big [\varphi_{k} \mathcal{F}S^{3}(f,g) \Big ](\cdot) \Big \vert   \Big \vert \mathcal{M}^{u}_{p}(\R) \Big \Vert \\
& \qquad =  \Big \Vert  \sup_{k \in \mathbb{N}_{0}}  2^{ks} \Big \vert \mathcal{F}^{-1}\Big [\varphi_{k} \mathcal{F} \Big ( \sum_{\ell = 0}^{\infty} \sum_{j = -1}^{1} f_{\ell} g_{\ell + j} \Big )  \Big ] \Big \vert   \Big \vert \mathcal{M}^{u}_{p}(\R) \Big \Vert \\
& \qquad \lesssim \sum_{j = -1}^{1} \Big \Vert  \sup_{k \in \mathbb{N}_{0}}  2^{ks} \Big \vert \mathcal{F}^{-1}\Big [\varphi_{k} \mathcal{F} \Big ( \sum_{\ell = 0}^{\infty}  f_{\ell} g_{\ell + j}  \Big ) \Big ] \Big \vert  \Big \vert \mathcal{M}^{u}_{p}(\R) \Big \Vert \\
& \qquad \lesssim \max_{j \in \{ -1, 0, 1 \} } \Big \Vert  \sup_{k \in \mathbb{N}_{0}}  2^{ks} \Big \vert \mathcal{F}^{-1}\Big [\varphi_{k} \mathcal{F} \Big ( \sum_{\ell = 0}^{\infty}  f_{\ell} g_{\ell + j} \Big ) \Big ] \Big \vert   \Big \vert \mathcal{M}^{u}_{p}(\R) \Big \Vert .
\end{align*}
Now we apply Proposition \ref{prop:dyadic_crit} with $ q = \infty  $ and $\tilde{u}_{\ell }:= f_{\ell} g_{\ell + j}$ for $\ell \in \mathbb{N}_{0}$. This is possible since $ s > \sigma_{p}$ and $\supp ( \mathcal{F} ( f_{\ell} g_{\ell + j}) ) \subset B(0, 2^{\ell + 3})$, see \eqref{splitting_1_supp2}. It yields 
\begin{align*}
\Big \Vert  \sup_{k \in \mathbb{N}_{0}}  2^{ks} \Big \vert \mathcal{F}^{-1}\Big [\varphi_{k} \mathcal{F}S^{3}(f,g) \Big ](\cdot) \Big \vert   \Big \vert \mathcal{M}^{u}_{p}(\R) \Big \Vert  \lesssim \max_{j \in \{ -1, 0, 1 \} } \Big \Vert  \sup_{k \in \mathbb{N}_{0}}  2^{ks} \vert   f_{k} g_{k + j}    \vert   \Big \vert \mathcal{M}^{u}_{p}(\R) \Big \Vert .
\end{align*}
The Hölder inequality with $ 1 = \frac{p}{p_{1}} + \frac{p}{p_{2}}  $ and \eqref{eq_u_ballsplit_u1u2} implies 
\begin{align*}
& \Big \Vert  \sup_{k \in \mathbb{N}_{0}}  2^{ks} \Big \vert \mathcal{F}^{-1}\Big [\varphi_{k} \mathcal{F}S^{3}(f,g) \Big ](\cdot) \Big \vert   \Big \vert \mathcal{M}^{u}_{p}(\R) \Big \Vert \\
& \qquad \qquad \lesssim \max_{-1 \leq j \leq 1}  \Big \Vert  \sup_{k \in \mathbb{N}_{0}} 2^{\frac{ks}{2}} \vert f_{k} \vert  \Big \vert \mathcal{M}^{u_{1}}_{p_{1}} (\mathbb{R}^d) \Big \Vert \Big  \Vert   \sup_{k \in \mathbb{N}_{0}} 2^{\frac{ks}{2}} \vert g_{k + j} \vert  \Big \vert \mathcal{M}^{u_{2}}_{p_{2}} (\mathbb{R}^d) \Big  \Vert .
\end{align*}
The proof is complete. 
\end{proof}
In the estimates provided by
Proposition \ref{prop_splitting_multipliers_simple} and  Proposition \ref{prop_splitting_multipliers} sometimes expressions with a supremum inside the Morrey norm show up. They can be simplified by using the following upper estimate.  

\begin{lem}\label{lem_mor_eq_hardy}
Let $1 < p \leq u < \infty$. Then for $ f \in  \mathcal{M}^{u}_{p}(\R)   $ it holds
\begin{align*}
\Big \Vert  \sup_{k \in \mathbb{N}_{0}} \vert  f^{k}(x) \vert   \Big \vert \mathcal{M}^{u}_{p}(\R) \Big \Vert \lesssim \Vert f \vert  \mathcal{M}^{u}_{p}(\R)  \Vert  .
\end{align*}
\end{lem}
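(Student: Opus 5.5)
The plan is to dominate $\sup_k |f^k|$ pointwise by the Hardy--Littlewood maximal function $Mf$, and then use the boundedness of $M$ on Morrey spaces for $p>1$ (the Chiarenza--Frasca theorem).

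\emph{Pointwise bound.} By \eqref{eq_def_fjoben} we have $f^{k}=\mathcal{F}^{-1}[\varphi_{0}(2^{-k}\cdot)]*f$. Set $\psi:=\mathcal{F}^{-1}\varphi_0\in\mathcal{S}(\R)$. Then
\begin{align*}
f^k(x)=\int_{\R}2^{kd}\psi(2^k(x-y))f(y)\,dy .
\end{align*}
This integral is absolutely convergent because $f\in\mathcal{M}^u_p(\R)\subset L_1^{\loc}$ for $p>1$ and has controlled growth on large balls. Since $\psi$ is a Schwartz function, it has the radially decreasing integrable majorant $\Psi(x)=C_N(1+|x|)^{-N}$ with $N>d$. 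The classical estimate for approximate identities then gives
\begin{align*}
\sup_{k\in\N_0}|f^k(x)|\le \Vert\Psi\Vert_{L_1}\,Mf(x)\qquad\text{for all }x\in\R,
\end{align*}
where $Mf$ is the uncentred Hardy--Littlewood maximal function. This is proved by writing $\Psi$ as a superposition of normalized indicator functions of balls centred at $0$ (a layer-cake decomposition) and averaging $|f|$ over each ball.

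The one point to check here is that $f^k$ is indeed given by this convolution integral for $f\in\mathcal{M}^u_p(\R)$, not merely as a tempered distribution. This follows because $|B(0,R)|^{-1}\int_{B(0,R)}|f|\lesssim R^{-d/u}\Vert f\vert\mathcal{M}^u_p(\R)\Vert$. Summing over dyadic annuli against the rapid decay of $\psi$ then gives absolute convergence, and the convolution agrees with the distributional definition.

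\emph{Morrey boundedness of $M$.} It remains to show $\Vert Mf\vert\mathcal{M}^u_p(\R)\Vert\lesssim\Vert f\vert\mathcal{M}^u_p(\R)\Vert$ for $1<p\le u<\infty$. I will cite this (Chiarenza--Frasca) if allowed; otherwise the proof goes as follows. Fix a ball $B=B(y,r)$ and split $f=f\chi_{2B}+f\chi_{(2B)^c}$.

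The local part is handled by the $L_p$-boundedness of $M$:
\begin{align*}
\Vert M(f\chi_{2B})\Vert_{L_p}\lesssim \Vert f\chi_{2B}\Vert_{L_p}\lesssim |B|^{\frac1p-\frac1u}\,\Vert f\vert\mathcal{M}^u_p(\R)\Vert .
\end{align*}

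For the far part, note that any ball containing a point $x\in B$ and meeting $(2B)^c$ has radius at least of order $r$. Hence
\begin{align*}
M(f\chi_{(2B)^c})(x)\lesssim\sup_{t\gtrsim r}|B(y,t)|^{-1}\int_{B(y,t)}|f| .
\end{align*}
By H\"older's inequality this is bounded by $\sup_{t\gtrsim r}t^{-d/u}\Vert f\vert\mathcal{M}^u_p(\R)\Vert\lesssim r^{-d/u}\Vert f\vert\mathcal{M}^u_p(\R)\Vert$. Taking the $L_p(B)$-norm of this constant bound gives a factor $|B|^{1/p}$, which is compatible with the Morrey normalization $|B|^{\frac1u-\frac1p}$. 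Combining the two parts finishes the proof.

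I expect the only real obstacle to be this Morrey boundedness of $M$, specifically the far-part estimate. The constraint $p>1$ is needed exactly for the $L_p$-bound used on the local part.
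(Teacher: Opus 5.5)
Your argument is correct, but it takes a different route from the paper. The paper also starts from the convolution representation $f^k=(2\pi)^{-d/2}2^{kd}(\mathcal{F}^{-1}\varphi_0)(2^k\cdot)*f$. It then enlarges the supremum over $k\in\N_0$ to a supremum over all dilations $t>0$, recognizes the result as the Hardy--Morrey quasi-norm of $f$ with the fixed Schwartz kernel $\mathcal{F}^{-1}\varphi_0$, and cites the known identification of Hardy--Morrey spaces with $\mathcal{M}^u_p(\R)$ for $1<p\le u<\infty$.

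You instead dominate $\sup_k|f^k|$ pointwise by $\Vert\Psi\Vert_{L_1}Mf$ using a radially decreasing majorant. You then use, and essentially prove by the local/far splitting, the Chiarenza--Frasca boundedness of $M$ on Morrey spaces.

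The paper's version is shorter but rests on a black-box identification whose relevant direction is proved by essentially your argument. Your version has several advantages:
\begin{itemize}
\item it is self-contained;
\item it uses only the one inequality that is actually needed;
\item it makes explicit that $p>1$ enters through the $L_p$-boundedness of $M$ and the local integrability of $f$;
\item it justifies that $f^k$ is an absolutely convergent convolution integral, a point the paper passes over with a reference to the convolution theorem.
\end{itemize}

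Your far-part estimate is fine as sketched. A ball through $x\in B(y,r)$ that leaves $B(y,2r)$ has radius $\rho>r/2$ and is contained in $B(y,4\rho)$. Its average of $|f|$ is therefore controlled by $\rho^{-d/u}\Vert f\vert\mathcal{M}^u_p(\R)\Vert\lesssim r^{-d/u}\Vert f\vert\mathcal{M}^u_p(\R)\Vert$, using $u<\infty$.
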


\begin{proof}
For the proof at first we apply \eqref{eq_def_fjoben}. Due to $\varphi_0 \in C_0^{\infty}(\R) \subset \mathcal{S}(\R)$ and $ f \in  \mathcal{M}^{u}_{p}(\R) \subset \mathcal{S}'(\mathbb{R}^{d})  $ we can use the convolution theorem to find
\begin{align*}
\Big \Vert  \sup_{k \in \mathbb{N}_{0}} \vert  f^{k}(x) \vert   \Big \vert \mathcal{M}^{u}_{p}(\R) \Big \Vert & = \Big \Vert  \sup_{k \in \mathbb{N}_{0}} \vert  \mathcal{F}^{-1}[\varphi_{0}(2^{-k} \cdot ) \mathcal{F}f](x) \vert   \Big \vert \mathcal{M}^{u}_{p}(\R) \Big \Vert    \\
& = (2 \pi)^{ - \frac{d}{2}} \Big \Vert  \sup_{k \in \mathbb{N}_{0}} 2^{kd}  \vert  [(\mathcal{F}^{-1}\varphi_{0})(2^{k} \cdot ) \ast f](x) \vert   \Big \vert \mathcal{M}^{u}_{p}(\R) \Big \Vert    \\
& \leq (2 \pi)^{ - \frac{d}{2}} \Big \Vert  \sup_{t > 0} t^{-d}  \vert  [(\mathcal{F}^{-1}\varphi_{0})(t^{-1} \cdot ) \ast f](x) \vert   \Big \vert \mathcal{M}^{u}_{p}(\R) \Big \Vert  .
\end{align*}
Since $ \mathcal{F}^{-1}\varphi_{0} \in \mathcal{S}(\R)   $, this expression coincides with the definition of the Hardy-Morrey spaces, see for example \cite[Definition 1.3]{HouHen}. It is well known that for $1 < p \leq u < \infty$ the Hardy-Morrey spaces are equal to the Morrey spaces $ \mathcal{M}^{u}_{p}(\R)   $ with equivalent norms, see for instance the remark below \cite[Definition 1.3]{HouHen}. Some proof details concerning this observation can be found in \cite[Section 2]{HouHen}. Consequently, 
\begin{align*}
\Big \Vert  \sup_{t > 0} t^{-d}  \vert  [(\mathcal{F}^{-1}\varphi_{0})(t^{-1} \cdot ) \ast f](x) \vert   \Big \vert \mathcal{M}^{u}_{p}(\R) \Big \Vert  \lesssim  \Vert f \vert  \mathcal{M}^{u}_{p}(\R)  \Vert  .
\end{align*}
The proof is complete.
\end{proof}

\section{The Proof of Theorem \ref{thm_main_hölder1}}\label{sec_proof_mainresult}

Now we are well-prepared to prove the main result of this paper, namely Theorem \ref{thm_main_hölder1}. For that purpose we combine Proposition \ref{prop_splitting_multipliers_simple} and  Proposition \ref{prop_splitting_multipliers} with some embeddings of Franke-Jawerth type. The proof reads as follows.

\vspace{0,3 cm}

\textbf{Proof of Theorem \ref{thm_main_hölder1}.}

For the proof let $s > 0$, $0 < p_{1} \leq u_{1} < \infty$, $0 < p_{2} \leq u_{2} < \infty $,  $0 < p \leq u < \infty $, $ 0 < q_{1} \leq \infty    $, $ 0 < q_{2} \leq \infty    $  and  $ 0 < q \leq \infty    $. In addition, let \eqref{eq_hölder_meet_franke1} and \eqref{eq_hölder_meet_franke2} be fulfilled. Recall, that for $  p_{1} = u_{1}  $, $ p_{2} = u_{2}   $ and $  p = u  $ the theorem is well-known, see Theorem \ref{thm_hölder_historical1}.

\textit{Step 1.} At first we deal with the case of Besov-Morrey spaces and prove (i). Let  $ f \in  \mathcal{N}^{s}_{u_{1},p_{1},q_{1}}(\mathbb{R}^d)  $ and  $   g \in  \mathcal{N}^{s}_{u_{2},p_{2},q_{2}}(\mathbb{R}^d) $.   Using \eqref{splitting_1} - \eqref{splitting_1_new} we find
\begin{align*}
\Vert f \cdot g \vert  \mathcal{N}^{s}_{u,p,q}(\mathbb{R}^d)   \Vert & =   \Vert  S^{1}(f,g) + S^{2}(f,g) + S^{3}(f,g) \vert  \mathcal{N}^{s}_{u,p,q}(\mathbb{R}^d)   \Vert \\
& \lesssim  \Vert  S^{1}(f,g) \vert  \mathcal{N}^{s}_{u,p,q}(\mathbb{R}^d)   \Vert + \Vert S^{2}(f,g) \vert  \mathcal{N}^{s}_{u,p,q}(\mathbb{R}^d)   \Vert + \Vert S^{3}(f,g) \vert  \mathcal{N}^{s}_{u,p,q}(\mathbb{R}^d)   \Vert .
\end{align*}

\textit{Substep 1.1} At the start we deal with $S^{1}(f,g)$. Assumption \eqref{eq_hölder_meet_franke1} implies
\begin{align*}
\frac{1}{p} = \frac{1}{r} + \frac{s}{d} = \frac{1}{r_{1}} + \frac{1}{r_{2}} + \frac{s}{d} = \frac{1}{r_{1}} + \frac{1}{p_{2}} - \frac{s}{d} + \frac{s}{d} = \frac{1}{r_{1}} + \frac{1}{p_{2}}
\end{align*}
and \eqref{eq_hölder_meet_franke2} yields
\begin{align*}
\frac{1}{u} = \frac{1}{v} + \frac{s}{d} = \frac{1}{v_{1}} + \frac{1}{v_{2}} + \frac{s}{d} = \frac{1}{v_{1}} + \frac{1}{u_{2}} - \frac{s}{d} + \frac{s}{d} = \frac{1}{v_{1}} + \frac{1}{u_{2}} .
\end{align*}
Using Proposition \ref{prop_splitting_multipliers_simple} with $ \frac{1}{p} = \frac{1}{r_{1}} + \frac{1}{p_{2}}  $ and $ \frac{1}{u} = \frac{1}{v_{1}} + \frac{1}{u_{2}}  $ we get 
\begin{align*}
& \Vert  S^{1}(f,g) \vert  \mathcal{N}^{s}_{u,p,q}(\mathbb{R}^d)   \Vert  \\
& \qquad  = \Big ( \sum_{j = 0}^{\infty} 2^{jsq} \Vert \mathcal{F}^{-1}[\varphi_{j} \mathcal{F} S^{1}(f,g)]   \vert  \mathcal{M}^{u}_{p}(\mathbb{R}^d)  \Vert^{q}   \Big )^{\frac{1}{q}} \\
& \qquad  \lesssim  \Big ( \sum_{j = 0}^{\infty} 2^{jsq}  \Big \Vert  \sup_{k \in \mathbb{N}_{0}} \vert  f^{k}(x) \vert  \Big  \vert \mathcal{M}^{v_{1}}_{r_{1}}(\R)   \Big \Vert^{q}    \max_{\ell \in \{ -1, 0, 1, 2 \} }    \Vert  g_{j + \ell}   \vert \mathcal{M}^{u_{2}}_{p_{2}}(\R)  \Vert^{q}   \Big )^{\frac{1}{q}} .
\end{align*}
An application of Lemma \ref{lem_mor_eq_hardy} with $1 < r_{1} \leq v_{1} < \infty$ yields 
\begin{align*}
& \Vert  S^{1}(f,g) \vert  \mathcal{N}^{s}_{u,p,q}(\mathbb{R}^d)   \Vert  \\
& \qquad  \lesssim   \Vert  f   \vert \mathcal{M}^{v_{1}}_{r_{1}}(\R)   \Vert   \Big ( \sum_{j = 0}^{\infty} 2^{jsq}     \max_{\ell \in \{ -1, 0, 1, 2 \} }    \Vert  g_{j + \ell}   \vert \mathcal{M}^{u_{2}}_{p_{2}}(\R)  \Vert^{q}   \Big )^{\frac{1}{q}} \\
& \qquad  \leq   \Vert  f   \vert \mathcal{M}^{v_{1}}_{r_{1}}(\R)   \Vert   \Big ( \sum_{j = 0}^{\infty} 2^{jsq}   \Big (  \sum_{\ell = -1  }^{2}    \Vert  \mathcal{F}^{-1}[\varphi_{j + \ell} \mathcal{F}g]   \vert \mathcal{M}^{u_{2}}_{p_{2}}(\R)  \Vert \Big )^{q}   \Big )^{\frac{1}{q}} \\
& \qquad  \lesssim   \Vert  f   \vert \mathcal{M}^{v_{1}}_{r_{1}}(\R)   \Vert   \Big (   \sum_{\ell = -1 }^{2}  2^{- \ell s} \Big ( \sum_{j = 0}^{\infty} 2^{(j+\ell)sq}      \Vert  \mathcal{F}^{-1}[\varphi_{j + \ell} \mathcal{F}g]   \vert \mathcal{M}^{u_{2}}_{p_{2}}(\R)  \Vert^{q}   \Big )^{\frac{1}{q}}  \Big ) \\
& \qquad  \lesssim   \Vert  f   \vert \mathcal{M}^{v_{1}}_{r_{1}}(\R)   \Vert     \Vert g \vert  \mathcal{N}^{s}_{u_{2},p_{2},q}(\mathbb{R}^d)   \Vert. 
\end{align*}
Here we used $ \varphi_{n} := 0   $ for $n < 0$. Since $  r_{1} > 1  $, by Lemma \ref{l_bp1}(vi) and Lemma \ref{lem_FJ_2} we get $  \mathcal{N}^{s}_{u_{1},p_{1},r_{1}}(\mathbb{R}^{d}) \hookrightarrow  \mathcal{E}^{0}_{v_{1},r_{1},2}(\mathbb{R}^{d}) = \mathcal{M}^{v_{1}}_{r_{1}}(\mathbb{R}^d) $, where
\begin{align*}
s - \frac{d}{u_{1}} =  - \frac{d}{v_{1}}  \quad \mbox{and} \quad s > 0 \quad \mbox{and} \quad p_{1} < u_{1} \quad \mbox{and} \quad   s - \frac{d}{p_{1}}  = - \frac{d}{r_{1}}   ,
\end{align*}
what is ensured by \eqref{eq_hölder_meet_franke1} and \eqref{eq_hölder_meet_franke2}. In the special case $ p_{1} = u_{1}  $ we find $ r_{1} = v_{1}  $ and use \cite[Remark 3.3.5]{SiTri}. We obtain
\begin{equation}\label{eq_main_N_subs1.1}
\Vert  S^{1}(f,g) \vert  \mathcal{N}^{s}_{u,p,q}(\mathbb{R}^d)   \Vert \lesssim   \Vert  f   \vert   \mathcal{N}^{s}_{u_{1},p_{1},r_{1}}(\mathbb{R}^d) \Vert     \Vert g \vert  \mathcal{N}^{s}_{u_{2},p_{2},q}(\mathbb{R}^d)   \Vert. 
\end{equation}

\textit{Substep 1.2} To continue we investigate $S^{2}(f,g)$. Assumption \eqref{eq_hölder_meet_franke1} delivers
\begin{align*}
\frac{1}{p} = \frac{1}{r} + \frac{s}{d} = \frac{1}{r_{1}} + \frac{1}{r_{2}} + \frac{s}{d} = \frac{1}{p_{1}} - \frac{s}{d}  + \frac{1}{r_{2}} + \frac{s}{d} = \frac{1}{p_{1}}  + \frac{1}{r_{2}}
\end{align*}
and \eqref{eq_hölder_meet_franke2} yields
\begin{align*}
\frac{1}{u} = \frac{1}{v} + \frac{s}{d} = \frac{1}{v_{1}} + \frac{1}{v_{2}} + \frac{s}{d} = \frac{1}{u_{1}} - \frac{s}{d}  + \frac{1}{v_{2}} + \frac{s}{d} = \frac{1}{u_{1}}  + \frac{1}{v_{2}}.
\end{align*} 
Proposition \ref{prop_splitting_multipliers_simple} with $ \frac{1}{p} = \frac{1}{p_{1}} + \frac{1}{r_{2}}  $ and $ \frac{1}{u} = \frac{1}{u_{1}} + \frac{1}{v_{2}}  $ in combination with Lemma \ref{lem_mor_eq_hardy} with $ r_{2} > 1   $ implies
\begin{align*}
& \Vert  S^{2}(f,g) \vert  \mathcal{N}^{s}_{u,p,q}(\mathbb{R}^d)   \Vert  \\
& \qquad  = \Big ( \sum_{j = 0}^{\infty} 2^{jsq} \Vert \mathcal{F}^{-1}[\varphi_{j} \mathcal{F} S^{2}(f,g)]   \vert  \mathcal{M}^{u}_{p}(\mathbb{R}^d)  \Vert^{q}   \Big )^{\frac{1}{q}} \\
& \qquad  \lesssim  \Big ( \sum_{j = 0}^{\infty} 2^{jsq}  \Big \Vert  \sup_{k \in \mathbb{N}_{0}} \vert  g^{k}(x) \vert  \Big  \vert \mathcal{M}^{v_{2}}_{r_{2}}(\R)   \Big \Vert^{q}    \max_{\ell \in \{ -1, 0, 1, 2 \} }    \Vert  f_{j + \ell}   \vert \mathcal{M}^{u_{1}}_{p_{1}}(\R)  \Vert^{q}   \Big )^{\frac{1}{q}} \\
& \qquad \lesssim   \Vert  g   \vert \mathcal{M}^{v_{2}}_{r_{2}}(\R)   \Vert     \Vert f \vert  \mathcal{N}^{s}_{u_{1},p_{1},q}(\mathbb{R}^d)   \Vert. 
\end{align*}
Since $  r_{2} > 1  $, by Lemma \ref{l_bp1} and Lemma \ref{lem_FJ_2} we get $  \mathcal{N}^{s}_{u_{2},p_{2},r_{2}}(\mathbb{R}^{d}) \hookrightarrow  \mathcal{E}^{0}_{v_{2},r_{2},2}(\mathbb{R}^{d}) = \mathcal{M}^{v_{2}}_{r_{2}}(\mathbb{R}^d) $, where
\begin{align*}
s - \frac{d}{u_{2}} =  - \frac{d}{v_{2}}  \quad \mbox{and} \quad s > 0 \quad \mbox{and} \quad p_{2} < u_{2} \quad \mbox{and} \quad   s - \frac{d}{p_{2}}  = - \frac{d}{r_{2}}   .
\end{align*}
In the special case $ p_{2} = u_{2}  $ we find $ r_{2} = v_{2}  $ and use \cite[Remark 3.3.5]{SiTri}. We obtain
\begin{equation}\label{eq_main_N_subs1.2}
\Vert  S^{2}(f,g) \vert  \mathcal{N}^{s}_{u,p,q}(\mathbb{R}^d)   \Vert \lesssim   \Vert  g   \vert   \mathcal{N}^{s}_{u_{2},p_{2},r_{2}}(\mathbb{R}^d) \Vert     \Vert f \vert  \mathcal{N}^{s}_{u_{1},p_{1},q}(\mathbb{R}^d)   \Vert. 
\end{equation}

\textit{Substep 1.3} Now we deal with $S^{3}(f,g)$. Let $\frac{1}{t} = \frac{1}{p_{1}} + \frac{1}{p_{2}}$ and $\frac{1}{w} = \frac{1}{u_{1}} + \frac{1}{u_{2}}$. Let $\min(1,t) = \theta $. At first we consider the case $ q \leq \theta  $. By \eqref{eq_hölder_meet_franke2} we find
\begin{align*}
\frac{d}{w} - 2s = \frac{d}{u} - s .
\end{align*}
Consequently, Lemma \ref{lem_FJ_3} and  Proposition \ref{prop_splitting_multipliers}(ii),(iii) yield
\begin{align*}
& \Vert S^{3}(f,g) \vert  \mathcal{N}^{s}_{u,p,q}(\mathbb{R}^d)   \Vert \\
& \qquad \lesssim \Vert S^{3}(f,g) \vert  \mathcal{N}^{2s}_{w,t,q}(\mathbb{R}^d)   \Vert \\
& \qquad = \Big ( \sum_{j = 0}^{\infty} 2^{2s jq} \Vert \mathcal{F}^{-1}[\varphi_{j} \mathcal{F} S^{3}(f,g)]   \vert  \mathcal{M}^{w}_{t}(\mathbb{R}^d)  \Vert^{q}   \Big )^{\frac{1}{q}} \\
& \qquad \lesssim \Big ( \sum_{j = 0}^{\infty} 2^{2s jq} \max_{-1 \leq i \leq 1} \Big ( \sum_{\ell = -2}^{\infty} 2^{\ell d \theta ( \frac{1}{\theta} -  1 )} \Vert f_{j+ \ell} \vert \mathcal{M}^{u_{1}}_{p_{1}} (\mathbb{R}^d) \Vert^{\theta}  \Vert g_{j+ \ell + i} \vert \mathcal{M}^{u_{2}}_{p_{2}} (\mathbb{R}^d) \Vert^{\theta}  \Big )^{\frac{q}{\theta}}   \Big )^{\frac{1}{q}} \\
& \qquad \leq \Big (  \max_{-1 \leq i \leq 1}  \sum_{\ell = -2}^{\infty} 2^{\ell d q ( \frac{1}{\theta} -  1 )  }    \sum_{j = 0}^{\infty} 2^{2s jq}   \Vert f_{j+ \ell} \vert \mathcal{M}^{u_{1}}_{p_{1}} (\mathbb{R}^d) \Vert^{q}  \Vert g_{j+ \ell + i} \vert \mathcal{M}^{u_{2}}_{p_{2}} (\mathbb{R}^d) \Vert^{q}     \Big )^{\frac{1}{q}} \\
& \qquad = \Big (  \max_{-1 \leq i \leq 1}  \sum_{\ell = -2}^{\infty} 2^{\ell  q [ d ( \frac{1}{\theta} -  1 ) - 2s ]  }    \sum_{j = 0}^{\infty} 2^{2sq (\ell + j)  }   \Vert f_{j+ \ell} \vert \mathcal{M}^{u_{1}}_{p_{1}} (\mathbb{R}^d) \Vert^{q}  \Vert g_{j+ \ell + i} \vert \mathcal{M}^{u_{2}}_{p_{2}} (\mathbb{R}^d) \Vert^{q}     \Big )^{\frac{1}{q}} .
\end{align*}
By \eqref{eq_hölder_meet_franke1} we find 
\begin{align*}
d \Big ( \frac{1}{t} -  1  \Big ) = d \Big ( \frac{1}{p_{1}} + \frac{1}{p_{2}} -  1  \Big ) = d \Big ( \frac{1}{p} + \frac{s}{d} -  1  \Big ) < 2 s.
\end{align*}
This in combination with $s>0$ and the Hölder inequality yields
\begin{align*}
& \Vert S^{3}(f,g) \vert   \mathcal{N}^{s}_{u,p,q}(\mathbb{R}^d)   \Vert \\
& \quad \lesssim \Big (  \max_{-1 \leq i \leq 1}  \sum_{\ell = -2}^{\infty} 2^{\ell  q [ d ( \frac{1}{\theta} -  1 ) - 2s ]  }    \sum_{j = 0}^{\infty} 2^{sq (\ell + j)  }   \Vert f_{j+ \ell} \vert \mathcal{M}^{u_{1}}_{p_{1}} (\mathbb{R}^d) \Vert^{q}   2^{sq (\ell + j)  }  \Vert g_{j+ \ell + i} \vert \mathcal{M}^{u_{2}}_{p_{2}} (\mathbb{R}^d) \Vert^{q}     \Big )^{\frac{1}{q}} \\
& \quad \lesssim \max_{-1 \leq i \leq 1}   \Big (     \sum_{k = 0}^{\infty} 2^{ksq}   \Vert f_{k} \vert \mathcal{M}^{u_{1}}_{p_{1}} (\mathbb{R}^d) \Vert^{q}   2^{ksq  }  \Vert g_{k + i} \vert \mathcal{M}^{u_{2}}_{p_{2}} (\mathbb{R}^d) \Vert^{q}     \Big )^{\frac{1}{q}} \\
& \quad \leq   \Big (  \sum_{k = 0}^{\infty} 2^{2ksq}   \Vert f_{k} \vert \mathcal{M}^{u_{1}}_{p_{1}} (\mathbb{R}^d) \Vert^{2q} \Big )^{\frac{1}{2q}} \Big (  \sum_{k = 0}^{\infty}  2^{2ksq  }  \Vert g_{k} \vert \mathcal{M}^{u_{2}}_{p_{2}} (\mathbb{R}^d) \Vert^{2q}   \Big )^{\frac{1}{2q}}    \\
& \quad = \Vert f \vert  \mathcal{N}^{s}_{u_{1},p_{1},2q}(\mathbb{R}^d)   \Vert   \Vert g \vert  \mathcal{N}^{s}_{u_{2},p_{2},2q}(\mathbb{R}^d)   \Vert \\ 
& \quad \lesssim \Vert f \vert  \mathcal{N}^{s}_{u_{1},p_{1},q}(\mathbb{R}^d)   \Vert   \Vert g \vert  \mathcal{N}^{s}_{u_{2},p_{2},q}(\mathbb{R}^d)   \Vert .
\end{align*}
For $ q >  \theta  $ we proceed similarly, but use the triangle inequality to get 
\begin{align*}
& \Vert S^{3}(f,g) \vert  \mathcal{N}^{s}_{u,p,q}(\mathbb{R}^d)   \Vert^{\theta} \\
& \qquad \lesssim \Big ( \sum_{j = 0}^{\infty}  \max_{-1 \leq i \leq 1} \Big ( \sum_{\ell = -2}^{\infty} 2^{2s j \theta}  2^{\ell d \theta ( \frac{1}{\theta} -  1 )} \Vert f_{j+ \ell} \vert \mathcal{M}^{u_{1}}_{p_{1}} (\mathbb{R}^d) \Vert^{\theta}  \Vert g_{j+ \ell + i} \vert \mathcal{M}^{u_{2}}_{p_{2}} (\mathbb{R}^d) \Vert^{\theta}  \Big )^{\frac{q}{\theta}}   \Big )^{\frac{\theta}{q}} \\
& \qquad \lesssim  \max_{-1 \leq i \leq 1}  \sum_{\ell = -2}^{\infty}   \Big (   \sum_{j = 0}^{\infty}   2^{\ell d q ( \frac{1}{\theta} -  1 )  }  2^{2s jq}   \Vert f_{j+ \ell} \vert \mathcal{M}^{u_{1}}_{p_{1}} (\mathbb{R}^d) \Vert^{q}  \Vert g_{j+ \ell + i} \vert \mathcal{M}^{u_{2}}_{p_{2}} (\mathbb{R}^d) \Vert^{q}     \Big )^{\frac{\theta}{q}} \\
& \qquad =   \max_{-1 \leq i \leq 1}  \sum_{\ell = -2}^{\infty}   2^{\ell  \theta [ d ( \frac{1}{\theta} -  1 ) - 2s ]  }  \Big (  \sum_{j = 0}^{\infty} 2^{2sq (\ell + j)  }   \Vert f_{j+ \ell} \vert \mathcal{M}^{u_{1}}_{p_{1}} (\mathbb{R}^d) \Vert^{q}  \Vert g_{j+ \ell + i} \vert \mathcal{M}^{u_{2}}_{p_{2}} (\mathbb{R}^d) \Vert^{q}     \Big )^{\frac{\theta}{q}} \\
& \qquad \lesssim \Vert f \vert  \mathcal{N}^{s}_{u_{1},p_{1},q}(\mathbb{R}^d)   \Vert^{\theta}   \Vert g \vert  \mathcal{N}^{s}_{u_{2},p_{2},q}(\mathbb{R}^d)   \Vert^{\theta} .
\end{align*}

\textit{Substep 1.4} To complete Step 1, namely the proof of \eqref{eq_main_N_asequation}, we combine \eqref{eq_main_N_subs1.1}, \eqref{eq_main_N_subs1.2} and the result of Substep 1.3 to find
\begin{align*}
& \Vert f \cdot g \vert  \mathcal{N}^{s}_{u,p,q}(\mathbb{R}^d)   \Vert \\
& \qquad  \lesssim  \Vert  S^{1}(f,g) \vert  \mathcal{N}^{s}_{u,p,q}(\mathbb{R}^d)   \Vert + \Vert S^{2}(f,g) \vert  \mathcal{N}^{s}_{u,p,q}(\mathbb{R}^d)   \Vert + \Vert S^{3}(f,g) \vert  \mathcal{N}^{s}_{u,p,q}(\mathbb{R}^d)   \Vert \\
& \qquad \lesssim   \Vert  f   \vert   \mathcal{N}^{s}_{u_{1},p_{1},r_{1}}(\mathbb{R}^d) \Vert     \Vert g \vert  \mathcal{N}^{s}_{u_{2},p_{2},q}(\mathbb{R}^d)   \Vert + \Vert  g   \vert   \mathcal{N}^{s}_{u_{2},p_{2},r_{2}}(\mathbb{R}^d) \Vert     \Vert f \vert  \mathcal{N}^{s}_{u_{1},p_{1},q}(\mathbb{R}^d)   \Vert  \\
& \qquad \qquad \qquad +  \Vert f \vert  \mathcal{N}^{s}_{u_{1},p_{1},q}(\mathbb{R}^d)   \Vert   \Vert g \vert  \mathcal{N}^{s}_{u_{2},p_{2},q}(\mathbb{R}^d)   \Vert \\
& \qquad \lesssim \Vert f \vert  \mathcal{N}^{s}_{u_{1},p_{1},q_{1}}(\mathbb{R}^d)   \Vert   \Vert g \vert  \mathcal{N}^{s}_{u_{2},p_{2},q_{2}}(\mathbb{R}^d)   \Vert .
\end{align*}
Here we used \eqref{eq_main_N_cond_qq1q2}. This proves \eqref{eq_main_N_asequation}.

\textit{Step 2.} Now we deal with the Triebel-Lizorkin-Morrey spaces and prove (ii). Let $  f   \in   \mathcal{E}^{s}_{u_{1},p_{1},q_{1}}(\mathbb{R}^d) $ and $ g \in  \mathcal{E}^{s}_{u_{2},p_{2},q_{2}}(\mathbb{R}^d) $.   Again, by \eqref{splitting_1} - \eqref{splitting_1_new} we find
\begin{align*}
\Vert f \cdot g \vert  \mathcal{E}^{s}_{u,p,q}(\mathbb{R}^d)   \Vert & =   \Vert  S^{1}(f,g) + S^{2}(f,g) + S^{3}(f,g) \vert  \mathcal{E}^{s}_{u,p,q}(\mathbb{R}^d)   \Vert \\
& \lesssim  \Vert  S^{1}(f,g) \vert  \mathcal{E}^{s}_{u,p,q}(\mathbb{R}^d)   \Vert + \Vert S^{2}(f,g) \vert  \mathcal{E}^{s}_{u,p,q}(\mathbb{R}^d)   \Vert + \Vert S^{3}(f,g) \vert  \mathcal{E}^{s}_{u,p,q}(\mathbb{R}^d)   \Vert .
\end{align*}

\textit{Substep 2.1} At first we deal with $S^{1}(f,g)$. Assumptions \eqref{eq_hölder_meet_franke1} and \eqref{eq_hölder_meet_franke2} imply
\begin{align*}
\frac{1}{p}  = \frac{1}{r_{1}} + \frac{1}{p_{2}}  \qquad \mbox{and} \qquad \frac{1}{u}  = \frac{1}{v_{1}} + \frac{1}{u_{2}} .
\end{align*}
Using a slightly modified version of Proposition \ref{prop_splitting_multipliers}(i) with $ \frac{1}{p} = \frac{1}{r_{1}} + \frac{1}{p_{2}}  $ and $ \frac{1}{u} = \frac{1}{v_{1}} + \frac{1}{u_{2}}  $ we get 
\begin{align*}
\Vert  S^{1}(f,g) \vert  \mathcal{E}^{s}_{u,p,q}(\mathbb{R}^d)   \Vert  
&  = \Big \Vert \Big ( \sum_{k = 0}^{\infty} 2^{ksq} \Big  \vert \mathcal{F}^{-1}\Big [\varphi_{k} \mathcal{F}S^{1}(f,g) \Big ](\cdot) \Big \vert^{q} \Big )^{\frac{1}{q}} \Big \vert \mathcal{M}^{u}_{p}(\R) \Big \Vert \\
&  \lesssim \Big \Vert  \sup_{k \in \mathbb{N}_{0}} \vert  f^{k}(x) \vert   \Big \vert \mathcal{M}^{v_{1}}_{r_{1}}(\R) \Big \Vert \Big \Vert \Big ( \sum_{k = 0}^{\infty} 2^{ksq}  \vert g_{k}(\cdot) \vert^{q} \Big )^{\frac{1}{q}} \Big \vert \mathcal{M}^{u_{2}}_{p_{2}}(\R) \Big \Vert .
\end{align*}
Lemma \ref{lem_mor_eq_hardy} with $1 < r_{1} \leq v_{1} < \infty$ and \eqref{eq_f_lowerindex} yield 
\begin{align*}
\Vert  S^{1}(f,g) \vert  \mathcal{E}^{s}_{u,p,q}(\mathbb{R}^d)   \Vert    \lesssim  \Vert  f  \vert \mathcal{M}^{v_{1}}_{r_{1}}(\R)  \Vert \Vert  g \vert  \mathcal{E}^{s}_{u_{2},p_{2},q}(\mathbb{R}^d)   \Vert .
\end{align*}
Since $  r_{1} > 1  $, by Lemma \ref{l_bp1}(vi) and Lemma \ref{lem_FJ_4} we get $  \mathcal{E}^{s}_{u_{1},p_{1},q}(\mathbb{R}^{d}) \hookrightarrow  \mathcal{E}^{0}_{v_{1},r_{1},2}(\mathbb{R}^{d}) = \mathcal{M}^{v_{1}}_{r_{1}}(\mathbb{R}^d) $, where we used 
\begin{align*}
s - \frac{d}{u_{1}} =  - \frac{d}{v_{1}}  \qquad \mbox{and} \qquad u_{1} < v_{1} \qquad \mbox{and} \qquad    \frac{r_{1}}{v_{1}}  \leq  \frac{p_{1}}{u_{1}}   ,
\end{align*}
what is ensured by $s > 0$ and \eqref{eq_hölder_meet_franke1} and \eqref{eq_hölder_meet_franke2}. This in combination with \eqref{eq_main_E_cond_qq1q2} implies
\begin{equation}\label{eq_main_E_subs1.1}
\Vert  S^{1}(f,g) \vert  \mathcal{E}^{s}_{u,p,q}(\mathbb{R}^d)   \Vert \lesssim   \Vert  f   \vert   \mathcal{E}^{s}_{u_{1},p_{1},q_{1}}(\mathbb{R}^d) \Vert     \Vert g \vert  \mathcal{E}^{s}_{u_{2},p_{2},q_{2}}(\mathbb{R}^d)   \Vert. 
\end{equation}

\textit{Substep 2.2} To continue we investigate $S^{2}(f,g)$. Using \eqref{eq_S2=S1} and Proposition \ref{prop_splitting_multipliers}(i) with $ \frac{1}{p} = \frac{1}{p_{1}} + \frac{1}{r_{2}}  $ and $ \frac{1}{u} = \frac{1}{u_{1}} + \frac{1}{v_{2}}  $ we get 
\begin{align*}
\Vert  S^{2}(f,g) \vert  \mathcal{E}^{s}_{u,p,q}(\mathbb{R}^d)   \Vert  
&  = \Big \Vert \Big ( \sum_{k = 0}^{\infty} 2^{ksq} \Big  \vert \mathcal{F}^{-1}\Big [\varphi_{k} \mathcal{F}S^{1}(g,f) \Big ](\cdot) \Big \vert^{q} \Big )^{\frac{1}{q}} \Big \vert \mathcal{M}^{u}_{p}(\R) \Big \Vert \\
&   \lesssim \Big \Vert  \sup_{k \in \mathbb{N}_{0}} \vert  g^{k}(x) \vert   \Big \vert \mathcal{M}^{v_{2}}_{r_{2}}(\R) \Big \Vert \Big \Vert \Big ( \sum_{k = 0}^{\infty} 2^{ksq}  \vert f_{k}(\cdot) \vert^{q} \Big )^{\frac{1}{q}} \Big \vert \mathcal{M}^{u_{1}}_{p_{1}}(\R) \Big \Vert .
\end{align*}
Lemma \ref{lem_mor_eq_hardy} with $1 < r_{2} \leq v_{2} < \infty$ and \eqref{eq_f_lowerindex} imply 
\begin{align*}
\Vert  S^{2}(f,g) \vert  \mathcal{E}^{s}_{u,p,q}(\mathbb{R}^d)   \Vert    \lesssim  \Vert  g  \vert \mathcal{M}^{v_{2}}_{r_{2}}(\R)  \Vert \Vert  f \vert  \mathcal{E}^{s}_{u_{1},p_{1},q}(\mathbb{R}^d)   \Vert .
\end{align*}
As before, since $  r_{2} > 1  $, by Lemma \ref{l_bp1}(vi) and Lemma \ref{lem_FJ_4} we get $  \mathcal{E}^{s}_{u_{2},p_{2},q}(\mathbb{R}^{d}) \hookrightarrow  \mathcal{E}^{0}_{v_{2},r_{2},2}(\mathbb{R}^{d}) = \mathcal{M}^{v_{2}}_{r_{2}}(\mathbb{R}^d) $. An application of \eqref{eq_main_E_cond_qq1q2} delivers
\begin{equation}\label{eq_main_E_subs2.2}
\Vert  S^{2}(f,g) \vert  \mathcal{E}^{s}_{u,p,q}(\mathbb{R}^d)   \Vert \lesssim   \Vert  f   \vert   \mathcal{E}^{s}_{u_{1},p_{1},q_{1}}(\mathbb{R}^d) \Vert     \Vert g \vert  \mathcal{E}^{s}_{u_{2},p_{2},q_{2}}(\mathbb{R}^d)   \Vert. 
\end{equation}

\textit{Substep 2.3} Now we deal with $S^{3}(f,g)$. As in Step 1 let $\frac{1}{t} = \frac{1}{p_{1}} + \frac{1}{p_{2}}$ and $\frac{1}{w} = \frac{1}{u_{1}} + \frac{1}{u_{2}}$. An application of Lemma \ref{lem_FJ_4} yields $ \mathcal{E}^{2s}_{w,t,\infty}(\mathbb{R}^d) \hookrightarrow   \mathcal{E}^{s}_{u,p,q}(\mathbb{R}^d)  $. To see this we used $s > 0$ and \eqref{eq_hölder_meet_franke1} as well as \eqref{eq_hölder_meet_franke2}. Consequently, Proposition \ref{prop_splitting_multipliers}(iv) yields
\begin{align*}
 \Vert S^{3}(f,g) \vert  \mathcal{E}^{s}_{u,p,q}(\mathbb{R}^d)   \Vert  
&  \lesssim \Vert S^{3}(f,g) \vert  \mathcal{E}^{2s}_{w,t, \infty}(\mathbb{R}^d)   \Vert \\
&  = \Big \Vert  \sup_{k \in \mathbb{N}_{0}}  2^{2ks} \Big \vert \mathcal{F}^{-1}\Big [\varphi_{k} \mathcal{F}S^{3}(f,g) \Big ](\cdot) \Big \vert   \Big \vert \mathcal{M}^{w}_{t}(\R) \Big \Vert \\
&   \lesssim \max_{-1 \leq j \leq 1}  \Big \Vert  \sup_{k \in \mathbb{N}_{0}} 2^{ks} \vert f_{k} \vert  \Big \vert \mathcal{M}^{u_{1}}_{p_{1}} (\mathbb{R}^d) \Big \Vert \Big  \Vert   \sup_{k \in \mathbb{N}_{0}} 2^{ks} \vert g_{k + j} \vert  \Big \vert \mathcal{M}^{u_{2}}_{p_{2}} (\mathbb{R}^d) \Big  \Vert .
\end{align*}
Here we used $2s > \sigma_{t} $, which follows by \eqref{eq_hölder_meet_franke1}. By \eqref{eq_f_lowerindex} we get 
\begin{align*}
& \Vert S^{3}(f,g) \vert  \mathcal{E}^{s}_{u,p,q}(\mathbb{R}^d)   \Vert  \\
& \qquad  \lesssim \max_{-1 \leq j \leq 1}  \Big \Vert  \sup_{k \in \mathbb{N}_{0}} 2^{ks} \vert \mathcal{F}^{-1}[\varphi_{k} \mathcal{F}f] \vert  \Big \vert \mathcal{M}^{u_{1}}_{p_{1}} (\mathbb{R}^d) \Big \Vert \Big  \Vert   \sup_{k \in \mathbb{N}_{0}} 2^{ks} \vert \mathcal{F}^{-1}[\varphi_{k+j} \mathcal{F}g] \vert  \Big \vert \mathcal{M}^{u_{2}}_{p_{2}} (\mathbb{R}^d) \Big  \Vert \\
& \qquad \lesssim \Vert f \vert  \mathcal{E}^{s}_{u_{1},p_{1},\infty}(\mathbb{R}^d)   \Vert  \Vert g \vert  \mathcal{E}^{s}_{u_{2},p_{2},\infty}(\mathbb{R}^d)   \Vert \\
& \qquad \lesssim \Vert f \vert  \mathcal{E}^{s}_{u_{1},p_{1},q_{1}}(\mathbb{R}^d)   \Vert  \Vert g \vert  \mathcal{E}^{s}_{u_{2},p_{2},q_{2}}(\mathbb{R}^d)   \Vert .
\end{align*}
Here we applied \eqref{eq_main_E_cond_qq1q2}.

\textit{Substep 2.4} To complete the proof we combine \eqref{eq_main_E_subs1.1}, \eqref{eq_main_E_subs2.2} and Substep 2.3. Then we find 
\begin{align*}
& \Vert f \cdot g \vert  \mathcal{E}^{s}_{u,p,q}(\mathbb{R}^d)   \Vert \\   
& \qquad  \lesssim  \Vert  S^{1}(f,g) \vert  \mathcal{E}^{s}_{u,p,q}(\mathbb{R}^d)   \Vert + \Vert S^{2}(f,g) \vert  \mathcal{E}^{s}_{u,p,q}(\mathbb{R}^d)   \Vert + \Vert S^{3}(f,g) \vert  \mathcal{E}^{s}_{u,p,q}(\mathbb{R}^d)   \Vert \\
& \qquad  \lesssim  \Vert  f   \vert   \mathcal{E}^{s}_{u_{1},p_{1},q_{1}}(\mathbb{R}^d) \Vert     \Vert g \vert  \mathcal{E}^{s}_{u_{2},p_{2},q_{2}}(\mathbb{R}^d)   \Vert .
\end{align*}
This is the desired result. Hence, the proof of Theorem \ref{thm_main_hölder1} is complete. 
\hfill \qedsymbol

\end{document}